\documentclass[12pt,a4paper,leqno,noamsfonts]{amsart}
\linespread{1.18}
\usepackage[english]{babel}
\usepackage[dvipsnames]{xcolor}
\usepackage{graphicx,pifont,soul}
\usepackage[utopia]{mathdesign}
\usepackage[a4paper,top=3cm,bottom=3cm,
           left=3cm,right=3cm,marginparwidth=60pt]{geometry}
\usepackage[utf8]{inputenc}
\usepackage{braket,caption,comment,mathtools,stmaryrd,physics}
\usepackage[usestackEOL]{stackengine}
\usepackage{multirow,booktabs,microtype,relsize}
\usepackage[colorlinks,bookmarks]{hyperref} %
      \hypersetup{colorlinks,%
            citecolor=britishracinggreen,%
            filecolor=black,%
            linkcolor=cobalt,%
            urlcolor=cornellred}
      \setcounter{tocdepth}{1}
      \setcounter{section}{-1}
      \numberwithin{equation}{section}
\usepackage[capitalise]{cleveref}
\DeclareSymbolFont{cmarrows}{OMS}{cmsy}{m}{n}
\SetSymbolFont{cmarrows}{bold}{OMS}{cmsy}{b}{n}
\DeclareMathSymbol{\cmminus}{\mathbin}{cmarrows}{"00}

\DeclareMathSymbol{\leftrightarrow}{\mathrel}{cmarrows}{"24}
\DeclareMathSymbol{\leftarrow}{\mathrel}{cmarrows}{"20}
   
\DeclareMathSymbol{\rightarrow}{\mathrel}{cmarrows}{"21}
   \let\to=\rightarrow
\DeclareMathSymbol{\mapstochar}{\mathrel}{cmarrows}{"37}
   \def\mapsto{\mapstochar\rightarrow}
   
   \DeclareSymbolFont{usualmathcal}{OMS}{cmsy}{m}{n}
\DeclareSymbolFontAlphabet{\mathcal}{usualmathcal}
\makeatletter
\newcommand{\mylabel}[2]{#2\def\@currentlabel{#2}\label{#1}}
\makeatother
\usepackage[colorinlistoftodos]{todonotes}


\definecolor{cornellred}{rgb}{0.7, 0.11, 0.11}
\definecolor{britishracinggreen}{rgb}{0.0, 0.26, 0.15}
\definecolor{cobalt}{rgb}{0.0, 0.28, 0.67}


\newcommand{\BA}{{\mathbb{A}}}

\newcommand{\BG}{{\mathbb{G}}}

\newcommand{\BL}{{\mathbb{L}}}

\newcommand{\BN}{{\mathbb{N}}}

\newcommand{\BP}{{\mathbb{P}}}
\newcommand{\BQ}{{\mathbb{Q}}}

\newcommand{\BZ}{{\mathbb{Z}}}

\newcommand{\CA}{{\mathcal A}}

\newcommand{\CE}{{\mathcal E}}
\newcommand{\CF}{{\mathcal F}}

\newcommand{\CI}{{\mathcal I}}

\newcommand{\CL}{{\mathcal L}}

\newcommand{\CR}{{\mathcal R}}

\newcommand{\CU}{{\mathcal U}}
\newcommand{\CV}{{\mathcal V}}

\newcommand{\CX}{{\mathcal X}}
\newcommand{\CY}{{\mathcal Y}}
\newcommand{\CZ}{{\mathcal Z}}

\newcommand{\OO}{\curly O}

\newcommand{\cF}{\curly F}
\newcommand{\cG}{\curly G}

\newcommand{\id}{\operatorname{id}}
\newcommand{\Spec}{\operatorname{Spec}}

\newcommand{\Supp}{\operatorname{Supp}}

\newcommand{\simto}{\,\widetilde{\to}\,}
\newcommand{\into}{\hookrightarrow}
\newcommand{\onto}{\twoheadrightarrow}
\newcommand{\bfk}{\mathbf{k}}
\newcommand{\opp}{\mathsf{op}}
\newcommand{\sm}{\mathsf{sm}}
\newcommand{\inj}{\mathsf{inj}}
\newcommand{\red}{\mathsf{red}}
\newcommand{\aff}{\mathsf{aff}}

\newcommand{\supp}{\mathsf{supp}}

\newcommand{\FS}{\mathfrak{S}}
\newcommand{\HH}{\mathrm{H}}

\newcommand{\XX}{\mathrm{x}}
\newcommand{\V}{\mathrm{V}}
\newcommand{\Iso}{\mathbf{Isom}}
\newcommand{\CCoh}{\mathscr{C}\kern-0.25em {o}\kern-0.2em{h}}


\DeclareMathOperator{\Aut}{Aut}
\DeclareMathOperator{\cycle}{cycle}
\DeclareMathOperator{\Ten}{T}
\DeclareMathOperator{\Pol}{Pol}
\DeclareMathOperator{\TS}{TS}
\DeclareMathOperator{\Alg}{Alg}

\DeclareMathOperator{\Hilb}{Hilb}
\DeclareMathOperator{\Sets}{Sets}
\DeclareMathOperator{\Sch}{Sch}
\DeclareMathOperator{\Quot}{Quot}

\DeclareMathOperator{\Ann}{Ann}
\DeclareMathOperator{\Sym}{Sym}
\DeclareMathOperator{\Coh}{Coh}
\DeclareMathOperator{\QCoh}{QCoh}
\DeclareMathOperator{\Ob}{Ob}
\DeclareMathOperator{\GL}{GL}

\DeclareMathOperator{\rk}{rk}
\DeclareMathOperator{\St}{St}
\DeclareMathOperator{\length}{length}
\DeclareMathOperator{\mult}{mult}
\DeclareMathOperator{\Exp}{Exp}
\DeclareMathOperator{\Var}{Var}
\DeclareMathOperator{\pr}{pr}

\DeclareFontFamily{OT1}{rsfs}{}
\DeclareFontShape{OT1}{rsfs}{n}{it}{<-> rsfs10}{}
\DeclareMathAlphabet{\curly}{OT1}{rsfs}{n}{it}

\newcommand\Hom{\operatorname{Hom}}
\newcommand{\lHom}{\mathscr{H}\kern-0.3em {o}\kern-0.2em{m}}
\newcommand{\RHom}{{\mathbf{R}\kern-0.07em\mathrm{Hom}}}
\newcommand{\RRlHom}{\mathbf{R}\kern-0.025em\mathscr{H}\kern-0.3em {o}\kern-0.2em{m}}

\newcommand{\lExt}{{\mathscr{E}\kern-0.2em xt}}
\newcommand{\End}{\operatorname{End}}
\newcommand{\lEnd}{\mathscr{E}\kern-0.15em {n}\kern-0.1em{d}}

\usepackage[all]{xy}
\usepackage{tikz}
\usepackage{tikz-cd}
\usepackage{adjustbox}
\usepackage{rotating}
\usepackage{comment}

\usetikzlibrary{matrix,shapes,intersections,arrows,decorations.pathmorphing}
\tikzset{commutative diagrams/.cd,
mysymbol/.style={start anchor=center,end anchor=center,draw=none}}
\newcommand\MySymb[2][\square]{%
  \arrow[mysymbol]{#2}[description]{#1}}
\tikzset{
shift up/.style={
to path={([yshift=#1]\tikztostart.east) -- ([yshift=#1]\tikztotarget.west) \tikztonodes}
}
}

\newcommand*{\defeq}{\mathrel{\vcenter{\baselineskip0.5ex \lineskiplimit0pt
                     \hbox{\scriptsize.}\hbox{\scriptsize.}}}%
                     =}
                     

\theoremstyle{definition}

\newtheorem*{lemma*}{Lemma}
\newtheorem*{theorem*}{Theorem}
\newtheorem*{example*}{Example}
\newtheorem*{fact*}{Fact}
\newtheorem*{notation*}{Notation}
\newtheorem*{definition*}{Definition}
\newtheorem*{prop*}{Proposition}
\newtheorem*{remark*}{Remark}
\newtheorem*{corollary*}{Corollary}
\newtheorem*{conventions*}{Conventions}

\newtheorem{definition}{Definition}[section]

\newtheorem{openproblem}[definition]{Open Problem}
\newtheorem{example}[definition]{Example}

\newtheorem{notation}[definition]{Notation}
\newtheorem{remark}[definition]{Remark}

\newtheoremstyle{thm} 
        {3mm}
        {3mm}
        {\slshape}
        {0mm}
        {\bfseries}
        {.}
        {1mm}
        {}
\theoremstyle{thm}
\newtheorem{theorem}[definition]{Theorem}
\newtheorem{corollary}[definition]{Corollary}
\newtheorem{lemma}[definition]{Lemma}
\newtheorem{prop}[definition]{Proposition}
\newtheorem{thm}{Theorem}

\makeatletter

\makeatother

\DeclareMathOperator{\Fitting}{Fitt}
\newcommand{\Ab}{\mathbb A}

\newcommand{\Lb}{\mathbb L}

\title[On the stack of 0-dimensional coherent sheaves: motivic aspects]{On the stack of 0-dimensional coherent sheaves: \\ motivic aspects}
\author{Barbara Fantechi, Andrea T. Ricolfi}
\keywords{Moduli stacks, Quot schemes, Grothendieck ring of varieties}
\subjclass[2020]{Primary 14C05; Secondary 14D23.}

\begin{document}
\maketitle
\begin{abstract}
Let $X$ be a variety. In this survey, we study (decompositions of) the motivic class, in the Grothendieck ring of stacks, of the stack $\CCoh^n(X)$ of $0$-dimensional coherent sheaves of length $n$ on $X$. To do so, we review the construction of the support map $\CCoh^n(X) \to \Sym^n(X)$ to the symmetric product and we prove that, for any closed point $p \in X$, the motive of the punctual stack $\CCoh^n(X)_p$ parametrising sheaves supported at $p$ only depends on a formal neighbourhood of $p$. We perform the same analysis for the Quot-to-Chow morphism $\Quot_X(\CE,n) \to \Sym^n(X)$, for a fixed sheaf $\CE \in \Coh X$.
\end{abstract}

{\hypersetup{linkcolor=black}\tableofcontents}

\section{Introduction}
Let $X$ be an algebraic variety defined over an algebraically closed field $\bfk$ of characteristic $0$. Consider the algebraic stack 
\[
\CCoh^n(X)
\]
parametrising coherent sheaves $\CF \in \Coh(X)$ such that
\begin{equation}\label{eqn:0-dim-sheaves}
\dim \Supp(\CF) = 0, \,\,\,\chi(\CF) = n.
\end{equation}
Some structural aspects of this stack have been treated in \cite{Fantechi-Ricolfi-structural}. In this paper we mostly focus on \emph{motivic} aspects, i.e.~on the class of $\CCoh^n(X)$ in the Grothendieck ring $K_0(\St_{\bfk}^{\aff})$ of algebraic stacks (cf.~\Cref{def:K-stacks}). Some of the results contained here are well-known to experts, but some of the proofs are hard to find in the literature; we have therefore decided to include them, to provide a friendly introduction to the subject.

Our viewpoint is the basic observation that a stratification of $\CCoh^n(X)$ induces a stratification on each moduli space mapping into $\CCoh^n(X)$. An example is provided by the \emph{Quot scheme of points} $\Quot_X(\CE,n)$, parametrising $0$-dimensional length $n$ quotients of a fixed coherent sheaf $\CE$ over $X$. For instance, if $\CE=\OO_X$, one recovers the \emph{Hilbert scheme of points} $\Hilb^n(X)$, parametrising $0$-dimensional subschemes $Z \into X$ of length $n$. Therefore, by picking a `clever' stratification of $\CCoh^n(X)$, one might hope to compute the motive of the Quot scheme of points by pullback.

Ideally, one would like to determine, as explicitly as possible, the motivic generating functions
\begin{equation}\label{eqn:motivic-series}
\begin{split}
\mathsf Z_X(t) &= \sum_{n\geqslant 0}\,[\CCoh^n(X)] t^n \,\in\, K_0(\St_{\bfk}^{\aff})\llbracket t\rrbracket\\
\mathsf Q_\CE(t) &= \sum_{n\geqslant 0}\,[\Quot_X(\CE,n)] t^n\,\in\, K_0(\Var_{\bfk})\llbracket t\rrbracket
\end{split}
\end{equation}
at least when $X$ is a smooth $\bfk$-variety and (for the second series) $\CE$ is a locally free sheaf over $X$. By standard power structure arguments, it is enough to deal with $\BA^{\dim X}$. As far as we know, this is currently out of reach for $\dim X > 2$. We present the known results for $\dim X \leqslant 2$ in \Cref{sec:state-of-art}. The reader can also consult \cite{double-nested-1,MOTIVES,MR_nested_Quot,MR-hyperquot,Motives-nested} for related motivic computations. The purpose of this paper is to lay down the technical foundations for tackling this kind of computations.

\smallbreak
We now briefly discuss the main contents of this work.

In \Cref{sec:background} we provide minimal background on families of sheaves, notions of support and motivic rings, such as $K_0(\Var_\bfk)$ and $K_0(\St^{\aff}_{\bfk})$. In \Cref{sec:coh(X/B)} we present the definition of the $B$-stack $\CCoh(X/B) \to B$ associated to a quasiprojective morphism $f\colon X \to B$ to a locally noetherian scheme $B$. 
We show in \Cref{thm:coh-is-algebraic} that $\CCoh(X/B)$ is algebraic following \cite{LMB}, thus including a few more assumptions, namely that $f$ is projective and $f_\ast \OO_X=\OO_B$ holds universally. Under these assumptions, we are able to underline the key role of Grothendieck's Quot scheme in the construction of an explicit smooth atlas $\mathrm{Q} \to\CCoh(X/B)$. Moreover, the stack is shown to have \emph{affine geometric stabilisers} (see \Cref{thm:coh-is-algebraic} and \Cref{lemma:affine_stab}). This implies that, if $X$ is a $\bfk$-variety, the open substacks (cf.~\Cref{sec:0-dim}) 
\[
\CCoh^n(X) \subset \CCoh(X/\bfk),
\]
parametrising 0-dimensional length $n$ sheaves as in \eqref{eqn:0-dim-sheaves}, carry a well-defined motivic class
\[
[\CCoh^n(X)] \,\in\,K_0(\St^{\aff}_{\bfk}),
\]
making the first series in \eqref{eqn:motivic-series} well-defined.

Following Rydh \cite{Rydh1}, we provide in \Cref{sec:supp-map} (and start using in \Cref{sec:coh-to-sym}) the construction of the \emph{Coh-to-Chow morphism} (or \emph{support map})
\begin{equation}\label{coh-to-sym-intro}
\begin{tikzcd}[row sep=tiny]
\CCoh^n(X) \arrow{rr}{\supp_X^n} & & \Sym^n(X) \\
{[\CF]} \arrow[mapsto]{rr} & & \displaystyle\sum_{x \in X} \length_{\OO_{X,x}}\CF_x\cdot x
\end{tikzcd}
\end{equation}
to the symmetric product of $\Sym^n(X) = X^n / \FS_n$.

\smallbreak
Given a coherent sheaf $\CE \in \Coh(X)$, the natural morphism (cf.~\Cref{lemma:quot-to-coh})
\[
\begin{tikzcd}[row sep=tiny]
\Quot_X(\CE,n) \arrow{r}{\rho_{\CE,n}} &  \CCoh^n(X) \\   
{[}\CE \onto \CF{]} \arrow[mapsto]{r} & {[}\CF{]}
\end{tikzcd} 
\]
can be composed with $\supp_X^n$ to get the \emph{Quot-to-Chow morphism} 
\[
\begin{tikzcd}
\Quot_X(\CE,n) \arrow{rr}{\mathsf{qc}_{\CE,n}} &&  \Sym^n(X).
\end{tikzcd}
\]
For a locally closed subscheme $Z \into X$, we define $\CCoh^n(X)_Z \into \CCoh^n(X)$ to be the preimage of $\Sym^n(Z)$ along $\supp_X^n$. Similarly, $\Quot_X(\CE,n)_Z\into\Quot_X(\CE,n)$ is defined to be the preimage of $\Sym^n(Z)$ along $\mathsf{qc}_{\CE,n}$. 

Yet another schematic structure on the locus of length $n$ sheaves (or sheaf quotients) with set-theoretic support $Z$ is presented in \Cref{sec:support-on-subset}. This structure is defined as
\begin{equation}
    \begin{split}
 \CCoh^n(X,Z) &\defeq \CCoh^n(Z_n), \\
 \Quot_X(\OO_X^{\oplus r},n,Z) &\defeq \Quot_{Z_n}(\OO_{Z_n}^{\oplus r},n),
    \end{split}
\end{equation}
where $Z_n \into X$ is the locally closed subscheme defined by $\mathscr I_Z^n$, where $\mathscr I_Z \subset \OO_U$ is the ideal sheaf of $Z$ in an open subset $U$ of $X$ containing $Z$.
There are isomorphisms $\CCoh^n(X,Z)_{\red}\simto (\CCoh^n(X)_Z)_{\red}$ and $\Quot_X(\OO_X^{\oplus r},n,Z)_{\red} \simto (\Quot_X(\OO_X^{\oplus r},n)_Z)_{\red}$ inducing the equalities of motivic classes 
\begin{equation}
\label{eqn:id-Z}
\begin{split}
[\CCoh^n(X,Z)] &= [\CCoh^n(X)_Z],\\
[\Quot_X(\OO_X^{\oplus r},n,Z)] &= [\Quot_X(\OO_X^{\oplus r},n)_Z].
\end{split}
\end{equation}

We shall prove the following motivic decompositions. 

\begin{thm}[\Cref{prop:coh-dec}, \Cref{cor:quot-dec}] \label{thm:motivic-dec-intro}
Let $X$ be a $\bfk$-variety,  $Z\into X$ a closed subscheme with complement $U = X \setminus Z$, and $n \in \BZ_{\geqslant 0}$ an integer. 
\begin{itemize}
\item [\mylabel{main1-1}{\normalfont{(1)}}] There is an identity 
\[
[\CCoh^n(X)]=\sum_{0\leqslant m\leqslant n} \,[\CCoh^m(U)] [\CCoh^{n-m}(X)_Z] \,\in\,K_0(\St^{\aff}_{\bfk}).
\]
\item [\mylabel{main1-2}{\normalfont{(2)}}] Given a coherent sheaf $\CE \in \Coh(X)$, there is an identity
\[
[\Quot_X(\CE,n)] = \sum_{0\leqslant m\leqslant n} \,[\Quot_U(\CE|_U,m)] [\Quot_X(\CE,n-m)_Z]\,\in\,K_0(\Var_{\bfk}).
\]
\end{itemize}
\end{thm}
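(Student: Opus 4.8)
The plan is to deduce both parts from a single construction: stratify the Chow variety $\Sym^n(X)$, pull the stratification back along the support map $\supp_X^n$ (resp.\ along $\mathsf{qc}_{\CE,n}$), and identify each stratum. The combinatorial ingredient is the locally closed decomposition
\[
\Sym^n(X)\;=\;\bigsqcup_{m=0}^{n}\ \Sym^m(U)\times\Sym^{n-m}(Z),
\]
where the $m$-th stratum is the locus of $0$-cycles whose restriction to the open set $U$ has degree exactly $m$ (equivalently, whose restriction to $Z$ has degree $n-m$); it is obtained by stratifying $X^n$ according to how many of its coordinates lie in $U$ and passing to the $\FS_n$-quotient, using $(A\times B)/(\FS_a\times\FS_b)\cong\Sym^a(A)\times\Sym^b(B)$.

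For \ref{main1-1}, set $\CC_m\defeq(\supp_X^n)^{-1}\bigl(\Sym^m(U)\times\Sym^{n-m}(Z)\bigr)$, a locally closed substack of $\CCoh^n(X)$. Since the $\CC_m$ give a finite locally closed decomposition, the scissor relations in $K_0(\St^{\aff}_{\bfk})$ yield $[\CCoh^n(X)]=\sum_{m=0}^n[\CC_m]$, and since the class of a product is the product of the classes it is enough to produce isomorphisms $\CC_m\simto\CCoh^m(U)\times\CCoh^{n-m}(X)_Z$. I would get these from a canonical splitting of the universal sheaf $\mathbb F$ on $\CC_m\times X$. On $\CC_m$ the morphism $\supp_X^n$ factors through $\Sym^m(U)\times\Sym^{n-m}(Z)$, whence $\CC_m$ carries a relative effective $0$-cycle of degree $m$ supported inside $\CC_m\times U$ and one of degree $n-m$ supported inside $\CC_m\times Z$. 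Because these cycles have \emph{constant} degree, their supports $S_U$ and $S_Z$ are finite — hence proper — over $\CC_m$, so (as $\CC_m\times X\to\CC_m$ is separated) they are closed in $\CC_m\times X$; and $S_U\subseteq\CC_m\times U$ is disjoint from $S_Z\subseteq\CC_m\times Z$ since $U\cap Z=\emptyset$. As $\Supp(\mathbb F_t)$ equals the support of the cycle of $\mathbb F_t$ for every point $t$, we get $\Supp\mathbb F=S_U\sqcup S_Z$; being supported on a disjoint union of two closed subsets, $\mathbb F$ splits canonically and functorially as $\mathbb F\cong\mathbb F_U\oplus\mathbb F_Z$, with $\mathbb F_U$ (resp.\ $\mathbb F_Z$) the summand supported on $S_U$ (resp.\ $S_Z$). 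Both summands are flat over $\CC_m$; viewed on $\CC_m\times U$, $\mathbb F_U$ is a flat family of length-$m$ sheaves on $U$, while $\mathbb F_Z$ is a flat family of length-$(n-m)$ sheaves on $X$ supported along $Z$, so they classify the two projections onto $\CCoh^m(U)$ and $\CCoh^{n-m}(X)_Z$. The inverse morphism sends a pair $(\CG,\CH)$ — a flat family of length-$m$ sheaves on $U$ and a flat family of length-$(n-m)$ sheaves on $X$ supported along $Z$ — to $(1\times j)_\ast\CG\oplus\CH$, where $j\colon U\into X$ is the inclusion; here $(1\times j)_\ast$ of a coherent sheaf with support finite over the base is again coherent, flat over the base and compatible with base change, so this is well defined, and one checks on $T$-points that the two assignments are mutually inverse. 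Part \ref{main1-1} follows.

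For \ref{main1-2}, recall $\mathsf{qc}_{\CE,n}=\supp_X^n\circ\rho_{\CE,n}$, so that $\Quot_X(\CE,n)_m\defeq(\mathsf{qc}_{\CE,n})^{-1}\bigl(\Sym^m(U)\times\Sym^{n-m}(Z)\bigr)=\rho_{\CE,n}^{-1}(\CC_m)$ is a locally closed subscheme and the scissor relations in $K_0(\Var_{\bfk})$ give $[\Quot_X(\CE,n)]=\sum_m[\Quot_X(\CE,n)_m]$. Over $\Quot_X(\CE,n)_m$ the universal quotient $q\colon\pr_X^\ast\CE\onto\mathbb F$ has target splitting as $\mathbb F_U\oplus\mathbb F_Z$ exactly as above, so $q$ is uniquely the pair $(q_U,q_Z)$ of its compositions with the two projections, each of which is a surjection; restricting $q_U$ to $\CC_m\times U$ (and keeping $q_Z$) classifies maps to $\Quot_U(\CE|_U,m)$ and $\Quot_X(\CE,n-m)_Z$. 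Conversely, a quotient $\CE|_U\onto\CG$ on $U$ of length $m$ together with a quotient $\CE\onto\CH$ on $X$ of length $n-m$ supported along $Z$ glue to $\CE\to(1\times j)_\ast\CG\oplus\CH$, which is surjective — checked stalkwise, and clear since the two summands have disjoint supports. This gives $\Quot_X(\CE,n)_m\simto\Quot_U(\CE|_U,m)\times\Quot_X(\CE,n-m)_Z$, and \ref{main1-2} follows as before.

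The step I expect to be the main obstacle is the one foundational input underlying everything above: that the support of a relative effective $0$-cycle of constant degree — equivalently, of the universal sheaf on $\CCoh^k$ restricted to such a stratum, or on $\Sym^k$ — is \emph{finite} over the base, hence closed, and likewise that $(1\times j)_\ast$ is exact on $0$-dimensional sheaves with support finite over the base, preserves flatness and commutes with base change there. This is where the non-properness of $X$ must be worked around; once it is granted — it is classical and lies in the same circle of ideas as Rydh's construction of the support map recalled above — the rest is a bookkeeping of locally closed strata with the scissor and product relations in the Grothendieck rings.
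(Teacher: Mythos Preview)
Your argument is correct and rests on the same idea as the paper's: split each sheaf into its $U$-supported and $Z$-supported summands and use the direct sum map. The paper's execution is more cautious in two respects. First, it does not pass through the stratification of $\Sym^n(X)$ but constructs directly morphisms
\[
j_m\colon \CCoh^m(U)_{\red}\times\CCoh^{n-m}(X,Z)_{\red}\longrightarrow\CCoh^n(X),
\]
using the auxiliary stack $\CCoh^{n-m}(X,Z)=\CCoh^{n-m}(Z_{n-m})$ rather than $\CCoh^{n-m}(X)_Z$, and only verifies that their coproduct is a \emph{geometric bijection} (equivalence on groupoids of $\bfk$-points); this suffices for the motivic identity once one invokes $[\CCoh^{n-m}(X,Z)]=[\CCoh^{n-m}(X)_Z]$. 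Second, it never asserts a stack isomorphism on each stratum. Your stronger claim $\CC_m\cong\CCoh^m(U)\times\CCoh^{n-m}(X)_Z$ is in fact true, but one step you pass over is not the one you flagged: having split $\mathbb F=\mathbb F_U\oplus\mathbb F_Z$ over an arbitrary base $T$, you must still check that $\supp^{n-m}(\mathbb F_Z)\colon T\to\Sym^{n-m}(X)$ factors \emph{scheme-theoretically} through $\Sym^{n-m}(Z)$, not merely set-theoretically. This follows because the addition map $\Sym^m(X)\times\Sym^{n-m}(X)\to\Sym^n(X)$ is \'etale on the open locus of disjoint supports, so its diagonal is open and closed there; the pair $(\supp^m(\mathbb F_U),\supp^{n-m}(\mathbb F_Z))$ and the pair coming from the factorisation of $\supp^n(\mathbb F)$ through the stratum both land in that open locus, have the same image under addition, and agree on geometric points, hence agree. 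The paper sidesteps this entirely by working at the level of $\bfk$-points.
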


The case where $Z$ is a closed point $p \in X$ is of particular interest. We present a proof ot the following result.

\begin{thm}[{\Cref{thm:punctual-smooth-point}, \Cref{thm:quot-punctual}}]
\label{main1:punctual-stack}
Let $X$ be a $\bfk$-variety, $p \in X$ a smooth closed point with $\dim T_pX=d$, and $n \in \BZ_{\geqslant 0}$ an integer. 

\begin{itemize}
\item [\mylabel{PP1}{(1)}] There is an isomorphism of algebraic stacks
\[
\begin{tikzcd}
    \CCoh^n(X)_p \arrow{r}{\sim} & \CCoh^n(\BA^d)_0.
\end{tikzcd}
\]
\item [\mylabel{PP2}{(2)}] For every $r>0$ there is an isomorphism of schemes
\[
\begin{tikzcd}
\Quot_{X}(\OO_X^{\oplus r},n)_p  \arrow{r}{\sim} & \Quot_{\BA^d}(\OO_{\BA^d}^{\oplus r},n)_0.
\end{tikzcd}
\]
\end{itemize}
\end{thm}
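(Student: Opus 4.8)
The plan is to reduce everything to the local structure of $X$ at $p$. The key observation is that both $\CCoh^n(X)_p$ and $\Quot_X(\OO_X^{\oplus r},n)_p$ are, by their very definition via the support maps, functors that see only sheaves (resp.\ quotients) set-theoretically supported at the single point $p$. Such a sheaf $\CF$ has $\Supp(\CF) = \{p\}$, hence is annihilated by some power $\mathscr I_p^N$ of the maximal ideal, and thus is naturally a module over the Artinian local ring $\OO_{X,p}/\mathscr I_p^N$. More precisely, I would first show that for a $\bfk$-scheme $T$, a $T$-flat family $\CF$ over $X \times T$ with support contained in $\{p\} \times T$ is, locally on $T$, pushed forward from $X_N \times T$ for $N \gg 0$ (where $X_N \into X$ is the closed subscheme cut out by $\mathscr I_p^N$), and that this $N$ can be chosen uniformly when $T$ is of finite type because the length is bounded by $n$; in fact $\mathscr I_p^{n+1}$ already kills everything of length $\leqslant n$. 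This identifies $\CCoh^n(X)_p$ with a colimit over $N$ of stacks $\CCoh^n(X_N)_p$, which stabilises, so $\CCoh^n(X)_p \simeq \CCoh^n(X_{n+1})_p$, and likewise $\Quot_X(\OO_X^{\oplus r},n)_p \simeq \Quot_{X_{n+1}}(\OO_{X_{n+1}}^{\oplus r},n)_p$. Note this last identification is close in spirit to the constructions $\CCoh^n(X,Z)$ and $\Quot_X(\OO_X^{\oplus r},n,Z)$ already introduced in the excerpt for $Z = \{p\}$, so I can lean on those.

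The second step is the input of smoothness: since $p$ is a smooth point with $\dim T_pX = d$, the completed local ring $\widehat{\OO}_{X,p}$ is isomorphic to $\bfk\llbracket x_1,\dots,x_d\rrbracket$, which is precisely $\widehat{\OO}_{\BA^d,0}$. Truncating modulo the $(n+1)$-st power of the maximal ideal on both sides gives an isomorphism of Artinian $\bfk$-algebras $\OO_{X,p}/\mathscr I_p^{n+1} \xrightarrow{\sim} \OO_{\BA^d,0}/\mathfrak m_0^{n+1}$. Writing $A$ for this common Artinian ring and $S = \Spec A$, both $X_{n+1}$ and $(\BA^d)_{n+1}$ are isomorphic, as schemes, to an open subscheme of $X$ (resp.\ $\BA^d$) that retracts onto $S$; but for the purposes of parametrising sheaves supported at the closed point, only the infinitesimal neighbourhood matters, so really $\CCoh^n(X_{n+1})_p = \CCoh^n(S)$ (the punctual condition being automatic since $S$ has a single point) and similarly on the affine side. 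Concretely, the moduli functor $\CCoh^n(S)$ sends $T$ to the groupoid of $T$-flat $\OO_{S\times T}$-modules that are finite of constant length $n$ over $T$, and this depends only on the ring $A$; the same is true of $\Quot_S(\OO_S^{\oplus r},n)$. Transporting along the isomorphism $A \simeq \OO_{\BA^d,0}/\mathfrak m_0^{n+1}$ then yields the desired isomorphisms with $\CCoh^n(\BA^d)_0$ and $\Quot_{\BA^d}(\OO_{\BA^d}^{\oplus r},n)_0$.

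For part~\ref{PP2} I would proceed in parallel: a $T$-point of $\Quot_X(\OO_X^{\oplus r},n)_p$ is a short exact sequence $0 \to \CK \to \OO_{X\times T}^{\oplus r} \to \CF \to 0$ with $\CF$ being $T$-flat of relative length $n$ and supported on $\{p\}\times T$; again $\CF = \iota_*\CF'$ for $\iota\colon X_{n+1}\times T \into X\times T$, and the surjection $\OO_{X\times T}^{\oplus r}\onto \CF$ factors uniquely through $\OO_{X_{n+1}\times T}^{\oplus r} \onto \CF'$ by the universal property of the closed immersion together with the annihilation statement, compatibly with base change. This sets up a natural transformation $\Quot_X(\OO_X^{\oplus r},n)_p \to \Quot_{X_{n+1}}(\OO_{X_{n+1}}^{\oplus r},n)$ landing automatically in the punctual locus, with an inverse given by pushforward along $\iota$; one checks both composites are the identity. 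Then smoothness gives $\Quot_{X_{n+1}}(\OO_{X_{n+1}}^{\oplus r},n) \simeq \Quot_{(\BA^d)_{n+1}}(\OO_{(\BA^d)_{n+1}}^{\oplus r},n)$ and running the affine version of the first equivalence backwards lands in $\Quot_{\BA^d}(\OO_{\BA^d}^{\oplus r},n)_0$.

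The main obstacle I anticipate is the uniform-annihilation / representability bookkeeping in the first step: I need that for \emph{every} $\bfk$-scheme $T$ (not just finite-type ones, so that the isomorphism is a genuine isomorphism of stacks and not merely a bijection on finite-type points), a $T$-flat family of length-$n$ sheaves supported at $p$ is annihilated by $\mathscr I_p^{n+1}\otimes\OO_T$. This should follow from flatness plus Nakayama applied fibrewise — on each fibre a length-$n$ module over a local ring is killed by the $(n+1)$-st power of the maximal ideal, and flatness propagates this — but one has to be a little careful to phrase it so that it globalises over $T$ and is stable under base change, which is what makes the resulting map a morphism of stacks. Everything else is a formal consequence of (i) the definitions of the punctual loci via the support maps from \Cref{sec:supp-map}, (ii) the closed-immersion pushforward yoga, and (iii) the smooth-point isomorphism of truncated local rings; none of these should present real difficulty once the first point is nailed down, and indeed the statement \eqref{eqn:id-Z} quoted above already packages a version of (i)+(ii) for general closed $Z$, so for $Z=\{p\}$ I can cite it directly.
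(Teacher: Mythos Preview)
Your approach is genuinely different from the paper's and, as written, has a real gap. The paper never passes through infinitesimal thickenings: it chooses \'etale coordinates $\pi\colon A\to\BA^d$ around $p$ with $\pi^{-1}(0)=\{p\}$, invokes the key \Cref{lemma:etale-comparison} (pushforward along an \'etale map gives an \'etale morphism $V_{\inj}\to\CCoh^n(\BA^d)$), base-changes to the punctual point to obtain an \'etale map $\CCoh^n(X)_p\to\CCoh^n(\BA^d)_0$, and then checks this map is bijective on points and on automorphism groups (the latter via the adjunction $\pi^\ast\dashv\pi_\ast$). The Quot case is handled in parallel using \cite[Prop.~A.3]{BR18}. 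This produces an isomorphism directly between the two fibres of the support maps, without ever comparing either with $\CCoh^n(X_{n+1})$.

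The gap in your argument is the step ``$\CCoh^n(X)_p\simeq\CCoh^n(X_{n+1})$ because the punctual condition is automatic on a one-point scheme''. This is false: the two stacks agree only on reduced structures (this is precisely what \Cref{rmk:supp-Z} says), and \eqref{eqn:id-Z} is a statement about motivic classes, not an isomorphism of stacks, so you cannot cite it here. Concretely, take $X=\BA^1$, $n=2$, $T=\Spec\bfk[\epsilon]/\epsilon^2$, and let $x$ act on $\OO_T^{\oplus 2}$ by $\phi=\mathrm{diag}(\epsilon,0)$. Then $\phi^2=0$, so this is a $T$-point of $\CCoh^2(X_2)=\CCoh^2(\BA^1,0)$; but the support map sends it to $(\tr\phi,\det\phi)=(\epsilon,0)\neq(0,0)$ in $\Sym^2(\BA^1)\cong\BA^2$, so it is \emph{not} a $T$-point of $\CCoh^2(\BA^1)_0$. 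Thus your argument only establishes $\CCoh^n(X,p)\simeq\CCoh^n(\BA^d,0)$, which is the easy formal-neighbourhood statement and not the theorem. (Your Nakayama argument for uniform annihilation is also shaky: over $T=\Spec\bfk[\epsilon]/\epsilon^k$ with $x$ acting by $\epsilon$ on a rank-$1$ module, the set-theoretic support is $\{0\}$ on every fibre yet no fixed power of $\mathfrak m_0$ kills as $k$ grows; what actually forces annihilation for genuine $T$-points of $\CCoh^n(X)_p$ is Cayley--Hamilton applied to the norm-law condition, not Nakayama.) Your route can be salvaged by showing that the ring isomorphism $\OO_{X,p}/\mathfrak m_p^{n+1}\cong\OO_{\BA^d,0}/\mathfrak m_0^{n+1}$ intertwines the two Coh-to-Chow maps, so that the proper closed substacks $\CCoh^n(X)_p\hookrightarrow\CCoh^n(X,p)$ and $\CCoh^n(\BA^d)_0\hookrightarrow\CCoh^n(\BA^d,0)$ correspond; this is plausible but requires unwinding Rydh's norm construction and is not what you sketched.
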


Let $\sigma$ be a singularity type, i.e.~an equivalence class of pointed schemes $(Y,y)$ under the equivalence relation `having the same completed local ring' (\Cref{def:sing-type}). Because of the identities \eqref{eqn:id-Z} and the canonical isomorphisms $\OO_{X,p}/\mathfrak m^n_p \simto \widehat{\OO}_{X,p}/\widehat{\mathfrak m}^n_p$, the generating functions
\begin{align*}
\mathsf Z_{\sigma}(t) 
&= \sum_{n\geqslant 0} \,[\CCoh^n(Y)_y] t^n \\
\mathsf Q_{r,\sigma}(t) 
&= \sum_{n\geqslant 0}\,[\Quot_Y(\OO_Y^{\oplus r},n)_y] t^n
\end{align*}
are independent on the representative $(Y,y)$ of $\sigma$. If $Y$ is smooth of dimension $d$ at $y \in Y$, by \Cref{main1:punctual-stack} the corresponding generating functions are
\begin{align*}
\mathsf Z_{\sm_d}(t) &= \sum_{n\geqslant 0} \,[\CCoh^n(\BA^d)_0] t^n \\
\mathsf Q_{r,\sm_d}(t) &= \sum_{n\geqslant 0}\,[\Quot_{\BA^d}(\OO_{\BA^d}^{\oplus r},n)_0]t^n.
\end{align*}
We exploit this observation in order to present general formulas for the generating functions \eqref{eqn:motivic-series}. More precisely, we prove the following identities (partially in the language of power structures, recalled in \Cref{subsec:power-str}).

\begin{thm}[{\Cref{thm:gen-fct-coh}, \Cref{thm:Quot-Series}}]\label{main3}
Let $X$ be a $\bfk$-variety of dimension $d$, with smooth locus $X_{\sm} \subset X$. Let $\sigma_1,\ldots,\sigma_e$ be distinct singularity types, and assume $X$ has $k_i$ points of singularity type $\sigma_i$ for $i=1,\ldots,e$, and no other singularities. 
\begin{enumerate}
\item There are product decompositions
\[
\mathsf Z_X(t) = \mathsf Z_{X_{\sm}}(t)  \prod_{1 \leqslant i \leqslant e} \mathsf Z_{\sigma_i}(t)^{k_i} = \mathsf Z_{\sm_d}(t)^{[X_{\sm}]} \prod_{1 \leqslant i \leqslant e} \mathsf Z_{\sigma_i}(t)^{k_i}.
\]
\item If $\CE$ is a locally free sheaf of rank $r>0$ on $X$, there are product decompositions
\[
\mathsf Q_\CE(t) = \mathsf Q_{\CE|_{X_{\sm}}}(t)  \prod_{1\leqslant i\leqslant e} \mathsf Q_{r,\sigma_i}(t)^{k_i} = \mathsf Q_{r,\sm_d}(t)^{[X_{\sm}]}  \prod_{1\leqslant i\leqslant e} \mathsf Q_{r,\sigma_i}(t)^{k_i}.
\]
\end{enumerate}
\end{thm}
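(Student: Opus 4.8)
The plan is to establish the identities in parts (1) and (2) by the same two moves: first use the motivic decomposition of \Cref{thm:motivic-dec-intro} to peel off the finitely many singular points of $X$, and then treat the remaining smooth part by combining the punctual rigidity of \Cref{main1:punctual-stack} with the power‑structure formalism recalled in \Cref{subsec:power-str}. I describe the argument for the series $\mathsf Z_X(t)$; the series $\mathsf Q_\CE(t)$ is handled identically, replacing $\CCoh$ by $\Quot$ and keeping track of the rank $r$.

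\emph{Peeling off the singular points.} Put $Z = X\setminus X_{\sm} = \{p_1,\dots,p_N\}$, a reduced closed subscheme consisting of finitely many closed points, with $N=\sum_i k_i$. Applying the first part of \Cref{thm:motivic-dec-intro} with this $Z$ and $U=X_{\sm}$, and reading the resulting convolution as an equality of power series, gives
\[
\mathsf Z_X(t)\;=\;\mathsf Z_{X_{\sm}}(t)\cdot\Big(\sum_{n\geqslant 0}[\CCoh^n(X)_Z]\,t^n\Big).
\]
Next I would note that a $0$-dimensional coherent sheaf with set-theoretic support inside $\{p_1\}\sqcup\cdots\sqcup\{p_N\}$ splits canonically as the direct sum of its localisations at the $p_j$, and that this splitting propagates in families (over the open and closed stratum of $\Sym^n(Z)$ on which $p_j$ has multiplicity $m_j$, the universal family decomposes accordingly). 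Hence there is an isomorphism of $\BZ_{\geqslant 0}$-graded stacks
\[
\coprod_{n\geqslant 0}\CCoh^n(X)_Z\;\simto\;\prod_{1\leqslant j\leqslant N}\Big(\coprod_{m\geqslant 0}\CCoh^m(X)_{p_j}\Big),
\]
so that $\sum_n[\CCoh^n(X)_Z]t^n=\prod_j\big(\sum_m[\CCoh^m(X)_{p_j}]t^m\big)$. Each factor equals $\mathsf Z_{\sigma(p_j)}(t)$, where $\sigma(p_j)$ is the singularity type at $p_j$: this is exactly the invariance of $\mathsf Z_\sigma$ under the choice of representative, which follows from the identities \eqref{eqn:id-Z} together with $\OO_{X,p_j}/\mathfrak m_{p_j}^n\simto\widehat{\OO}_{X,p_j}/\widehat{\mathfrak m}_{p_j}^n$. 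Grouping the $p_j$ by singularity type yields the first equality $\mathsf Z_X(t)=\mathsf Z_{X_{\sm}}(t)\prod_i\mathsf Z_{\sigma_i}(t)^{k_i}$ in part (1); part (2) is verbatim with the second part of \Cref{thm:motivic-dec-intro}, using that a locally free sheaf $\CE$ of rank $r$ is isomorphic to $\OO^{\oplus r}$ near each $p_j$ and that the punctual Quot scheme depends only on a formal neighbourhood of the point.

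\emph{The smooth part.} It remains to prove $\mathsf Z_{X_{\sm}}(t)=\mathsf Z_{\sm_d}(t)^{[X_{\sm}]}$ with $\mathsf Z_{\sm_d}(t)=\sum_{m\geqslant 0}[\CCoh^m(\BA^d)_0]t^m$, and likewise $\mathsf Q_{\CE|_{X_{\sm}}}(t)=\mathsf Q_{r,\sm_d}(t)^{[X_{\sm}]}$. The idea is to read both sides off the support map $\supp_{X_{\sm}}^n\colon\CCoh^n(X_{\sm})\to\Sym^n(X_{\sm})$: stratifying $\Sym^n(X_{\sm})$ by multiplicity type, the fibre over a cycle $\sum_j m_j p_j$ with distinct $p_j$ is $\prod_j\CCoh^{m_j}(X_{\sm})_{p_j}$, and by \Cref{main1:punctual-stack} each factor is isomorphic to $\CCoh^{m_j}(\BA^d)_0$. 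To turn this fibrewise statement into a motivic one I would choose a finite stratification $X_{\sm}=\bigsqcup_a X_a$ such that, for every $m\leqslant n$, the relative punctual stack $\{(p,\CF)\colon\Supp\CF=\{p\},\ \length\CF=m\}\to X_{\sm}$ restricts to a \emph{trivial} family $X_a\times\CCoh^m(\BA^d)_0$ over each $X_a$; such a stratification exists because $X_{\sm}$ is smooth, hence Zariski-locally étale over $\BA^d$, the relative punctual stack is compatible with étale base change (it sees only completed local rings), and over $\BA^d$ it is globally trivial by translation-invariance. Refining the stratification of $\Sym^n(X_{\sm})$ both by multiplicity type and by which $X_a$ the support points lie in, descent along the associated ordered-configuration covers identifies the preimage in $\CCoh^n(X_{\sm})$ of each refined stratum with the corresponding symmetric-group quotient $\big((\text{ordered configuration})\times\prod_m(\CCoh^m(\BA^d)_0)^{\times k_m}\big)\big/\prod_m\FS_{k_m}$. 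Summing these contributions and invoking multiplicativity of the power structure in the exponent (so that $\mathsf Z_{\sm_d}(t)^{[X_{\sm}]}=\prod_a\mathsf Z_{\sm_d}(t)^{[X_a]}$, whose coefficients are precisely those quotients) gives the claim. For the Quot series one runs the same argument for the relative punctual Quot scheme, additionally refining the stratification so that $\CE$ becomes trivial on each $X_a$, and using the Quot part of \Cref{main1:punctual-stack}.

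\emph{The main obstacle.} Peeling off the singular points is routine. The crux is the smooth case, and within it the passage from the fibrewise identifications of \Cref{main1:punctual-stack} to an equality of motivic classes: the honest isomorphisms only become available after a stratification that trivialises the relative punctual family, and one must then match the resulting quotients by symmetric groups with the coefficients of the power $\mathsf Z_{\sm_d}(t)^{[X_{\sm}]}$, as in \Cref{subsec:power-str}. The only further delicate point is that $\CCoh^m(\BA^d)_0$ is a stack rather than a variety, but this is harmless since the power structure on $K_0(\Var_\bfk)$ extends canonically to its localisation $K_0(\St^{\aff}_\bfk)$.
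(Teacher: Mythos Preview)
Your proposal is correct and follows essentially the same route as the paper: the first identity is obtained from the motivic decomposition of \Cref{thm:motivic-dec-intro} (equivalently, the stratification \eqref{strat-coh}) by splitting off the finitely many singular points, and the second identity is the power-structure formula for the smooth locus. Where the paper simply invokes \cite{GLMHilb} and \cite{ricolfi2019motive} for the second identity, you have sketched the underlying argument (Zariski trivialisation of the relative punctual stack via \Cref{prop:zar-triviality}, refinement of the partition stratification, and matching the resulting $\FS_\alpha$-quotients with the coefficients of the power \eqref{eqn:power_formula}); this is indeed the content of those references, and your observation that the extension of the power structure to $K_0(\St^{\aff}_{\bfk})$ (\Cref{rmk:ps-extends}) is needed is exactly what the paper notes.
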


We do not currently know of an analogue of \Cref{main3} for the case where $X$ has singular locus of positive dimension. We leave this for future investigation.

\subsection*{Acknowledgements} 
The authors are partially supported by the PRIN \emph{Geometry of algebraic structures: moduli, invariants, deformations} 2022BTA242. Both authors are members of GNSAGA of INDAM. We thank J.~Jelisiejew for pointing out a mistake in a previous version of this work, and the anonymous referees for their useful comments.


\section{Conventions and background material}\label{sec:background}
In this section we state our conventions and we provide minimal background on families of coherent sheaves, and on the Grothendieck ring of stacks. Some basic theory of algebraic stacks is reviewed in \Cref{sec:stacks}.

\subsection{Conventions}
All schemes in this paper are locally noetherian. A morphism of schemes $f\colon X \to B$ is quasiprojective (resp.~projective) if it is of finite type (resp.~proper) and there is an $f$-ample invertible sheaf on $X$ \cite[\href{https://stacks.math.columbia.edu/tag/01VV}{Tag 01VV}]{stacks-project}. A \emph{variety} will be an integral scheme quasiprojective over an algebraically closed field $\bfk$ of characteristic $0$, that is kept fixed throughout. We simply write `$X \times T$' for fibre products over $\Spec \bfk$. For a scheme $Y$, we denote by $\Coh(Y)$, resp.~$\QCoh(Y)$, the abelian category of coherent, resp.~quasicoherent sheaves on $Y$. 

Given a morphism of schemes $Y \to T$, and a quasicoherent sheaf $\cF$ on $Y$, we say that $\cF$ is \emph{flat over} $T$, or $T$-\emph{flat}, if for any point $y \in Y$, with image $t \in T$, the stalk $\cF_y$ is flat as a module over $\OO_{T,t}$, via the canonical ring homomorphism $\OO_{T,t}\to \OO_{Y,y}$. Given a point $t \in T$, we denote by $\cF_t$ the restriction $\cF|_{Y_t}$  of $\cF$ to the fibre $Y_t = Y \times_T \Spec \kappa(t)$.

By the word `stratification', we mean the following: given a scheme $Y$ and locally closed subschemes $Z_i \into Y$, we say that they form a stratification of $Y$ if the induced morphism $f\colon \coprod_i Z_i \to Y$ is bijective. We use the same notion for stacks, requiring $f$ to induce an equivalence on geometric points (cf.~\Cref{def:stratification}).

\subsection{Coherent sheaves and notions of support}
Recall (see e.g.~\cite[\href{https://stacks.math.columbia.edu/tag/01XZ}{Tag 01XZ}]{stacks-project}) that, on a locally noetherian scheme $Y$, an $\OO_Y$-module $\cF$ is coherent if and only if it is quasicoherent and of finite type, if and only if it is finitely presented, which means (cf.~\cite[\href{https://stacks.math.columbia.edu/tag/01BN}{Tag 01BN}]{stacks-project}) that every point $y \in Y$ has an open neighbourhood $V \subset Y$ such that $\cF|_V$ is the cokernel of a map of free $\OO_V$-modules of finite rank. Recall, also, that the pullback of a coherent sheaf, along a morphism of locally noetherian schemes, is still coherent. Therefore, if $X \to B$ is quasiprojective (with $B$ locally noetherian), $\cF \in \Coh(X)$ and $B'$ is a locally noetherian $B$-scheme, then the pullback of $\cF$ along $X \times_BB' \to X$ is coherent, since both $X$ and $X \times_BB'$ are locally noetherian.

\begin{definition} Let $Y$ be a (locally noetherian) scheme and $\cF\in\QCoh(Y)$.
\begin{itemize}
\item [\mylabel{supp-i}{(i)}] The {\em set-theoretic support} of $\cF$ is the subset
\[
\Supp_{\text{set}}(\cF) = \set{y\in Y\,|\, \cF_y\neq 0} \subset Y.
\]
\item [\mylabel{supp-ii}{(ii)}] The {\em scheme-theoretic support} of $\cF$ is the closed subscheme
\[
\begin{tikzcd}
\Supp(\cF) \arrow[hook]{r} &  Y
\end{tikzcd}
\]
defined by the annihilator ideal sheaf $\Ann(\cF) \subset \OO_Y$, namely the kernel of the canonical $\OO_Y$-linear homomorphism $\OO_Y \to \lHom_{\OO_Y}(\cF,\cF)$.
\item [\mylabel{supp-iii}{(iii)}] The dimension of $\Supp(\cF)$ is called the \emph{dimension of} $\cF$ and is denoted $\dim \cF$.
\end{itemize}
\end{definition}

\begin{remark}\label{rmk:supp}
Let $Y$ be a (locally noetherian) scheme and $\cF\in\QCoh(Y)$.
\begin{itemize}
    \item [\mylabel{rmk1}{(1)}] Let $y\in Y$ be a point. If $\cF \in \Coh(Y)$, by Nakayama's Lemma, $y \in \Supp_{\text{set}}(\cF)$ if and only if $y\in \Supp(\cF)$. In particular, the set-theoretic support of a coherent sheaf is a closed subspace of $Y$. 
    \item [\mylabel{rmk2}{(2)}] By definition, $\Supp(\cF)$ is the smallest amongst all closed subschemes $\iota \colon Z \into Y$ such that the natural map $\cF \to \iota_\ast \iota^\ast \cF$ is an isomorphism. 
\end{itemize}
\end{remark}

\begin{remark}\label{rmk:ev-map-and-annihilation}
If $\CF$ is a $0$-dimensional coherent sheaf on a $\bfk$-variety $X$, the natural evaluation map $\HH^0(X,\CF) \otimes_{\bfk}\OO_X \to \CF$ is surjective. If $\chi(\CF)=n$ and $p \in \Supp(\CF)$, then $\mathfrak m_p^n\CF_p = 0$.
\end{remark}

Another notion of support can be given via Fitting ideals. We will not need the Fitting support in this paper, but it is worth recalling.

\begin{definition}
Let $\cF$ be a coherent sheaf on a scheme $Y$. Fix an integer $i \geqslant -1$. Pick an open cover of $Y$ by open subschemes $V \into Y$ such that $\cF$ admits a local presentation
\[
\begin{tikzcd}
    \OO_V^{\oplus m} \arrow{r}{a_V} & \OO_V^{\oplus n}\arrow{r} & \cF|_V \arrow{r} & 0.
\end{tikzcd}
\]
The $i$-th \emph{Fitting ideal} of $\cF$ is the ideal sheaf $\Fitting^i(\cF) \subset \OO_Y$ having, as local generators over $V$, the minors of size $n-i$ of the matrix determined by $a_V$.
\end{definition}

We have the following properties.
\begin{enumerate}
\item The ideal sheaf $\Fitting^i(\cF) \subset \OO_Y$ does not depend on the chosen presentation of $\cF$, so the definition makes sense.
\item We have a filtration of ideal sheaves
\[
0 = \Fitting^{-1}(\cF) \subset \Fitting^0(\cF) \subset \cdots \subset \OO_Y.
\]
\item The closed subscheme $Z_0 = \V(\Fitting^0(\cF)) \into Y$ cut out by $\Fitting^0(\cF)$ contains $\Supp(\cF)$ as a closed subscheme. It defines yet another scheme structure on $\Supp_{\mathrm{set}}(\cF)$, i.e.~one has an isomorphism $\Supp(\cF)_{\red} \simto (Z_0)_{\red}$, and hence an identity $\lvert Z_0 \rvert = \lvert \Supp(\cF) \rvert$ of topological spaces.
\item Fitting ideals are invariant under arbitrary base change \cite[\href{https://stacks.math.columbia.edu/tag/0C3C}{Tag 0C3C}]{stacks-project}. The scheme-theoretic support $\Supp(\cF)$ is in general not invariant under base change; it is invariant under flat base change \cite[\href{https://stacks.math.columbia.edu/tag/01U2}{Tag 01U2}]{stacks-project}.
\end{enumerate}   

\subsection{Quot schemes}
Let $f\colon X \to B$ be a quasiprojective morphism, $\OO_X(1)$ an $f$-very ample invertible sheaf. Fix a coherent sheaf $\CR\in\Coh(X)$ and a polynomial $P \in \BQ[z]$. Grothendieck's \emph{Quot scheme} $\Quot_{X/B}(\CR,P)$ is the (quasiprojective) $B$-scheme whose $T$-valued points, for any $B$-scheme $T$, are the isomorphism classes of surjections $\CR_T \onto \cF$ in $\Coh(X\times_BT)$, where 
\begin{itemize}
    \item $\cF$ is a $T$-flat sheaf and $\Supp(\cF) \to T$ is proper,
    \item the Hilbert polynomial of $\cF_t$ is $P$ for every $t\in T$, and
    \item two surjections are isomorphic if they have the same kernel.
\end{itemize}  
We have denoted by $\CR_T$ the pullback of $\CR$ along $\pr_X\colon X \times_B T \to X$. We refer the reader to \cite{Grothendieck_Quot,fga_explained} and the references therein for basic material on Quot schemes.

\subsection{Families of sheaves}
The following definition is key to the definition of the stack of coherent sheaves.
\begin{definition}\label{def:family-coh-sheaves}
Let $B$ be a locally noetherian scheme, $X\to B$ a quasiprojective morphism, $T$ a locally noetherian $B$-scheme. 
\begin{itemize}
\item [\mylabel{family-1}{(1)}] A \emph{family of coherent sheaves} on $X\to B$, parametrised by $T$ (also called a $T$-family), is a $T$-flat coherent sheaf
\[
\cF\in\Coh(X\times_BT)
\]
such that $\Supp(\cF) \to T$ is proper.
\item [\mylabel{family-2}{(2)}] A \emph{family of~~$0$-dimensional sheaves} on a $X \to B$, parametrised by $T$, is a family of coherent sheaves $\cF \in \Coh(X\times_B T)$ such that $\Supp(\cF)\to T$ is finite (i.e.~proper with finite fibres, see \cite[\href{https://stacks.math.columbia.edu/tag/02LS}{Tag 02LS}]{stacks-project}). 
\end{itemize}
\end{definition} 

\begin{remark}
Let $\cF$ be a $T$-family of coherent sheaves as in \Cref{def:family-coh-sheaves}.
\begin{enumerate}
\item [(a)] The properness condition on $\Supp(\cF) \to T$ is redundant as soon as $X \to B$ is itself proper, because $\Supp(\cF) \into X\times_BT$ is a closed immersion and $X\times_BT\to T$ is proper by base change.
\item [(b)] Properness only depends on the reduced structure, in particular $\Supp(\cF) \to T$ is proper if and only if $\V(\Fitting^0(\cF)) \to T$ is proper. 
\item [(c)] The support $\Supp(\cF)\to T$ need not be flat even though $\cF$ is $T$-flat. 
\end{enumerate}
\end{remark}

Let $X \to B$ and $T$ be as above. When a $T$-flat sheaf $\cF\in\Coh(X\times_B T)$ receives a surjection
\[
\begin{tikzcd}
\OO_{X\times_B T}\arrow[two heads]{r} & \cF
\end{tikzcd}
\]
from the structure sheaf, its kernel $\CI\subset \OO_{X\times T}$ defines a closed subscheme
\[
\begin{tikzcd}
\iota\colon Z\arrow[hook]{r} & X\times_B T,
\end{tikzcd}
\]
flat over $T$, such that $\cF = \iota_\ast\OO_Z$. In this situation, when $Z \to T$ is furthermore finite, the number
\[
n = \length Z_t = \chi(\OO_{Z_t}) = h^0(Z_t,\OO_{Z_t}) \in \BN
\]
is locally constant on $t \in T$, and we say that $\cF$ is a family of $0$-dimensional subschemes of length $n$ on the fibres of $X \to B$ (parametrised by $T$). Such families correspond to the $T$-valued points of (i.e.~$B$-morphisms from $T$ to) the \emph{Hilbert scheme of points} $\Hilb^n(X/B)$. We refer the reader to \cite[Chapter~5]{fga_explained} and the references therein for a construction of Hilbert schemes and, more generally, Quot schemes.

The following is a useful criterion for flatness, which shall be used throughout without mention.

\begin{prop}[{\cite[III, Thm.~9.9]{Hartshorne_AG}}]\label{flatness-criteria}
Let $f\colon Y \to T$ be a projective morphism of schemes, with $T$ noetherian. Let $\CL$ be an $f$-ample invertible sheaf on $Y$. Fix a coherent sheaf $\cF \in \Coh(Y)$. Consider the following conditions:
\begin{itemize}
    \item [\mylabel{flat-1}{(1)}] $\cF$ is $T$-flat.
    \item [\mylabel{flat-2}{(2)}] $f_{\ast} (\cF \otimes_{\OO_Y} \CL^{\otimes m})$ is locally free of finite rank for $m \gg 0$.
    \item [\mylabel{flat-3}{(3)}] The Hilbert polynomial
    \[
P_\CL(\cF_t,k) = \chi(\cF_t \otimes_{\OO_{Y_t}}\CL_t^{\otimes k})
    \]
    is locally constant as a function on $T$.
\end{itemize}
Then {\normalfont{\ref{flat-1}}} is equivalent to {\normalfont{\ref{flat-2}}} and implies {\normalfont{\ref{flat-3}}}. All three conditions are equivalent if $T$ is reduced.
\end{prop}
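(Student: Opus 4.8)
The plan is to follow the classical argument of \cite[III, Thm.~9.9]{Hartshorne_AG}. First I would reduce to $T = \Spec A$ with $A$ noetherian, since all three conditions and the conclusion are local on $T$; then, by replacing $\CL$ with an $f$-very ample tensor power (possible because $\CL$ is $f$-ample) in order to embed $Y\into\BP^N_A$, and keeping track of the finitely many twists modulo that power, I would reduce to $Y=\BP^N_A$ with $\CL = \OO(1)$. By Serre's finiteness and vanishing theorems, $\cF = \widetilde M$ for a finitely generated graded $A[x_0,\dots,x_N]$-module $M$ (one may take $M = \bigoplus_m\HH^0(\BP^N_A,\cF(m))$), and for $m\gg0$ one has $\HH^0(\BP^N_A,\cF(m)) = M_m$ and $\HH^i(\BP^N_A,\cF(m)) = 0$ for $i>0$; moreover the \v{C}ech complex $C^\bullet$ of $\cF(m)$ with respect to the standard affine cover computes these groups, and its terms are $A$-flat as soon as $\cF$ is $A$-flat (the sections of an $A$-flat quasicoherent sheaf over an affine open form an $A$-flat module). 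Finally $f_\ast(\cF\otimes\CL^{\otimes m})$ corresponds, for $m\gg0$, to the $A$-module $M_m$, which, being finitely generated over the noetherian ring $A$, is locally free of finite rank if and only if it is $A$-flat.

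Granting this setup, \ref{flat-1}$\Leftrightarrow$\ref{flat-2} and \ref{flat-1}$\Rightarrow$\ref{flat-3} become essentially bookkeeping. If $\cF$ is $A$-flat then for $m\gg0$ the sequence $0\to M_m\to C^0\to\cdots\to C^N\to 0$ is exact with all terms $A$-flat, and running down the associated short exact sequences, using that the kernel of a surjection of flat modules onto a flat module is flat, one gets that $M_m$ is $A$-flat, hence locally free of finite rank: this is \ref{flat-2}. Conversely, if $M_m$ is $A$-flat for all $m\geqslant m_0$ then $\bigoplus_{m\geqslant m_0}M_m$ is a flat $A$-module whose associated sheaf is $\cF$, and since $N\mapsto\widetilde N$ is exact and commutes with $-\otimes_A Q$, $\cF$ is $A$-flat: this is \ref{flat-1}. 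For \ref{flat-1}$\Rightarrow$\ref{flat-3}: tensoring the exact sequence $0\to M_m\to C^0\to\cdots\to C^N\to 0$ of flat modules with $\kappa(t)$ preserves exactness and produces the \v{C}ech complex of $\cF_t\otimes\CL_t^{\otimes m}$ on $Y_t = \BP^N_{\kappa(t)}$, whence $\HH^i(Y_t,\cF_t\otimes\CL_t^{\otimes m}) = 0$ for $i>0$ and $\HH^0(Y_t,\cF_t\otimes\CL_t^{\otimes m}) = M_m\otimes_A\kappa(t)$; so $P_\CL(\cF_t,m) = \dim_{\kappa(t)}(M_m\otimes_A\kappa(t))$ equals the rank of the locally free sheaf $f_\ast(\cF\otimes\CL^{\otimes m})$, which is locally constant in $t$ for every $m\gg0$, and since $m\mapsto P_\CL(\cF_t,m)$ is a polynomial it is locally constant in $t$ for all $m$.

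The main obstacle is the remaining implication \ref{flat-3}$\Rightarrow$\ref{flat-1} under the hypothesis that $T$ is reduced. My plan here is to reduce to $T=\Spec A$ connected (so \ref{flat-3} furnishes a single Hilbert polynomial $P$) and then, by the valuative criterion for flatness over a reduced noetherian base, to the case $T = \Spec V$ with $V$ a discrete valuation ring with uniformiser $\pi$: indeed, a failure of flatness over $T$ would, through a jump of $t\mapsto\dim_{\kappa(t)}(f_\ast(\cF\otimes\CL^{\otimes m})\otimes\kappa(t))$ along a specialisation from a generic point, pull back along a suitable $\Spec V\to T$ to a DVR over which the corresponding pushforward has unequal generic and special ranks, hence is not flat. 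Over such a $V$, condition \ref{flat-3} is inherited by the pullback of $\cF$ (Euler characteristics are unchanged under field extensions of the fibres), and I would argue as follows: letting $\CN\subseteq\cF$ be the $\pi$-torsion subsheaf, which is coherent because $Y$ is noetherian, the quotient $\cF/\CN$ is $\pi$-torsion-free, hence $V$-flat, so by \ref{flat-1}$\Rightarrow$\ref{flat-3} its fibrewise Euler characteristic is constant; since $\CN$ vanishes on the generic fibre, $\cF_\eta = (\cF/\CN)_\eta$, while flatness of $\cF/\CN$ makes $0\to\CN/\pi\CN\to\cF_0\to(\cF/\CN)_0\to 0$ exact on the closed fibre; comparing Euler characteristics and invoking \ref{flat-3} forces $\chi\bigl((\CN/\pi\CN)\otimes\CL_0^{\otimes k}\bigr) = 0$ for every $k$, which is impossible for a nonzero coherent sheaf on $\BP^N_{\kappa(V)}$ (such a sheaf has strictly positive Euler characteristic after a sufficiently positive twist), so $\CN = 0$ and $\cF$ is $V$-flat. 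One can also avoid the valuative criterion and argue directly on $\Spec A$: since $T$ is reduced it suffices, by the fact that a coherent sheaf with locally constant fibre rank over a reduced scheme is locally free, to show $t\mapsto\dim_{\kappa(t)}(f_\ast(\cF\otimes\CL^{\otimes m})\otimes\kappa(t))$ is locally constant for $m\gg0$; generic flatness together with \ref{flat-3} pin this function to $P(m)$ on a dense open, semicontinuity gives ``$\geqslant P(m)$'' everywhere, and the reverse inequality follows from comparison with fibrewise $\HH^0$, the delicate point being a Serre-type vanishing uniform over the fibres of $Y\to T$. Either way, all of the substance of the proposition lives in this reduced-base implication; the forward ones are formal once Serre's theorems are available.
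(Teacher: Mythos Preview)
The paper does not supply its own proof of this proposition: it is stated with attribution to \cite[III, Thm.~9.9]{Hartshorne_AG} and used as a black box. Your proposal reconstructs precisely Hartshorne's classical argument (reduction to $\BP^N_A$, Serre vanishing, the graded-module description of $\cF$, and the reduced-base implication via locally-constant fibre rank), so there is nothing to compare against and your approach is the expected one.

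One small remark on your sketch of \ref{flat-3}$\Rightarrow$\ref{flat-1}: the DVR reduction you outline is workable but not the most direct route, and the phrase ``valuative criterion for flatness'' is doing some heavy lifting that would need to be unpacked (non-flatness of a finitely generated module over a reduced noetherian ring is detected by a jump of fibre rank along a specialisation, which one can then dominate by a DVR). Your second, direct approach---showing $t\mapsto\dim_{\kappa(t)}(M_m\otimes\kappa(t))$ is locally constant for $m\gg0$ and invoking that a coherent sheaf of locally constant fibre rank over a reduced base is locally free---is closer to Hartshorne's own argument and cleaner; the ``uniform Serre vanishing over the fibres'' you flag as delicate is handled there by noetherian induction (generic flatness on a dense open, then repeat on the complement), which bounds $m_0$ uniformly after finitely many steps.
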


\subsection{Grothendieck rings of stacks}\label{sec:G-groups}
The definition of the Grothendieck ring of algebraic stacks was given by Ekedahl \cite{EKStacks}. The reader familiar with it can safely skip this section.

\subsubsection{Definition and ring structure}
Let $B$ be a $\bfk$-variety. The Grothendieck group of $B$-varieties is the free abelian group $K_0(\Var_B)$ generated by isomorphism classes $[X \to B]$ of finite type $B$-varieties, modulo the \emph{scissor relations}, namely 
\[
[f\colon X \to B] = [f|_{U}\colon U \to B] + [f|_{X\setminus U}\colon X\setminus U \to B]
\]
whenever $U \into X$ is an open $B$-subvariety. 

The Grothendieck group $K_0(\Var_B)$ becomes a ring via the operation
\begin{equation}\label{product-K-ring}
[X_1 \to B]\cdot [X_2 \to B] = [X_1\times_B X_2 \to B].
\end{equation}

We let $\BL = [\BA^1_B \to B] \in K_0(\Var_B)$ denote the \emph{Lefschetz motive} over $B$.

\begin{definition}[{Ekedahl \cite{EKStacks}}]\label{def:K-stacks}
Let $B$ be a $\bfk$-variety. The Grothendieck group of $B$-stacks is the free abelian group $K_0(\St_B^{\aff})$ generated by isomorphism classes $[\CX \to B]$ of locally finite type algebraic $B$-stacks with affine geometric stabilisers, modulo the scissor relations, and the \emph{local triviality relation}, namely
\[
[\CX \xrightarrow{f} B] = \BL^r\cdot  [\CY \xrightarrow{g} B]
\]
whenever $f$ factors as $g\circ \pi$ for $\pi\colon E \to \CY$ the projection from the total space of a rank $r$ vector bundle over $\CY$.
\end{definition}

We call $K_0(\St_B^{\aff})$ the \emph{Grothendieck ring of $B$-stacks}, the ring structure being given by fibre product as in \Cref{product-K-ring}. By results of Kresch \cite[Section 4]{kreschcycle}, this ring is isomorphic to the localisation
\[
K_0(\Var_B)[\BL^{-1},(\BL^n-1)^{-1}\,|\,n\geqslant 1].
\]
If $B=\Spec \bfk$, this is equivalent to the localisation of $K_0(\Var_{\bfk})$ at the classes 
\[
[\GL_n] = \prod_{i=0}^{n-1}\,\left(\BL^{n}-\BL^i\right),\quad n\geqslant 1,
\]
or, equivalently, at the classes of all special algebraic groups, where a smooth algebraic group $G$ is called \emph{special} if every principal $G$-bundle over a $\bfk$-variety is Zariski locally trivial. See Serre \cite[Sec.~4.1]{Serre-GLn} for the original definition and Grothendieck \cite{Grothendieck-torsion} for classification results.

\begin{remark}
It follows from these results and definitions that if $G$ is a special algebraic group over $\bfk$ and $Y$ is a $\bfk$-variety on which $G$ acts, then for the quotient stack $\CX = [Y/G]$ we have an identity
\[
[\CX] = \frac{[Y]}{[G]}\,\in\,K_0(\St_{\bfk}^{\aff}).
\]
This was proved by Ekedahl in \cite[Prop.~1.1]{EKStacks}.
In particular, the motive of a quotient stack does not see the action. For instance, the motivic class of $\mathrm B\BG_{m,\bfk} = [\Spec \bfk/\BG_{m,\bfk}]$ is $1/(\BL-1)$.
We will use this fact without further mention.
\end{remark}

\subsubsection{Equivariant Grothendieck rings}
The material in this subsection will only be needed in \Cref{sec:motivic-series}. For more details (or context), we refer the reader to  \cite{DavisonR}, or \cite{DenefLoeser1}. Recall that an action of a finite group $G$ on a $\bfk$-scheme $B$ is said to be \emph{good} if every point $x \in X$ is contained in a $G$-invariant affine open subset of $B$. Every finite group action on a variety is good, and in this case a quotient $B/G$ exists as a variety.

\begin{definition}\label{Equivariant_K_Group}
Let $G$ be a finite group, $B$ a $\bfk$-variety with a $G$-action. We denote by $\widetilde{K}_0^{G}(\Var_B)$ the abelian group 
generated by isomorphism classes $[X\to B]$ of $G$-equivariant $B$-varieties
with good action, modulo the $G$-equivariant scissor relations.
We define the $G$-\emph{equivariant Grothendieck group} $K_0^{G}(\Var_B)$ by imposing the further relations $[V\to X \to B]= [\BA^r_X]$,
whenever $V\to X$ is a $G$-equivariant vector bundle of rank $r$, with $X\to B$ a $G$-equivariant $B$-variety.
The element $[\BA^r_X]$ in the right hand side is taken with the $G$-action induced by the trivial action on $\BA^r$ and the isomorphism $\BA^r_X=\BA^r\times X$.
\end{definition}

There is a natural ring structure on $\widetilde{K}_0^{G}(\Var_B)$ given by taking the diagonal action on $X\times_BY$, for two equivariant $B$-varieties $X\to B$ and $Y\to B$. These rings are related by a $K_0(\Var_{B/G})$-linear map \cite[Lemma 1.5]{DavisonR}
\begin{equation}\label{map:quot1map}
\begin{tikzcd}
\widetilde{K}_0^{G}(\Var_B) \arrow{r}{\pi_G} &  K_0(\Var_{B/G})   
\end{tikzcd}
\end{equation}
defined on generators by taking the orbit space,
\[
\begin{tikzcd}
{[}X\to B{]} \arrow[mapsto]{r} & {[}X/G \to B/G{]}.
\end{tikzcd}
\]

Let $n>0$ be an integer, and let $\mathfrak{S}_n$ be the symmetric group of $n$ elements. By \cite[Lemma~1.6]{DavisonR}, there exists an `$n$-th power' map
\begin{equation}\label{powermap}
\begin{tikzcd}
K_0(\Var_B)\arrow{r}{(\,\cdot\,)^{\otimes n}} & \widetilde{K}_0^{\,\mathfrak S_n}(\Var_{B^n})
\end{tikzcd}
\end{equation}
where the $n$-fold product $B^n = B\times \cdots\times B$ carries the natural $\mathfrak{S}_n$-action.

\subsubsection{The language of power structures}\label{subsec:power-str}
We follow \cite{GLMps}. Roughly speaking, a \emph{power structure} on a ring $R$ is a rule allowing one to raise a power series $A(t) \in R\llbracket t \rrbracket$ starting with $1 \in R$ to an element $r \in R$, in such a way that all of the familiar properties of `raising to a power' are satisfied.

\begin{definition}[{\cite{GLMps}}]\label{def:power_structure}
A \emph{power structure} on a ring $R$ is a map 
\begin{align*}
(1+tR\llbracket t\rrbracket)\times R&\to 1+tR\llbracket t\rrbracket\\
(A(t),m)&\mapsto A(t)^m
\end{align*}
satisfying the following conditions: 
\begin{enumerate}
    \item $A(t)^0=1$,
    \item $A(t)^1=A(t)$, 
    \item $(A(t) B(t))^m=A(t)^m B(t)^m$, 
    \item $A(t)^{m+m'}=A(t)^m A(t)^{m'}$, 
    \item $A(t)^{mm'}=(A(t)^m)^{m'}$, 
    \item $(1+t)^m=1+mt+\mathrm{O}(t^2)$, 
    \item $A(t)^m\big{|}_{t\to t^e}=A(t^e)^m$.
\end{enumerate}  
\end{definition}

Fix a $\bfk$-variety $X$ and a power series $A(t)=1+\sum_{n>0}A_nt^n \in K_0(\Var_\bfk)\llbracket t\rrbracket$. Define
\begin{equation}\label{eqn:power_formula}
A(t)^{[X]}=1+\sum_{n > 0}\sum_{\alpha\vdash n}\pi_{\mathfrak{S}_\alpha}\Biggl(\Biggl[\prod_i X^{\alpha_i}\setminus \Delta\Biggr]\cdot \prod_i A_i^{\otimes \alpha_i}\Biggr)t^{n}.
\end{equation}
Here $\Delta\subset \prod_i X^{\alpha_i}$  is the `big diagonal', namely the locus in the product where at least two entries are equal; the group $\mathfrak{S}_\alpha$ is the automorphism group of the partition $\alpha = (1^{\alpha_1}\cdots n^{\alpha_n})$, namely $\mathfrak{S}_\alpha = \prod_i\FS_{\alpha_i}$. The product in big round brackets 
is a $\mathfrak{S}_\alpha$-equivariant motive, thanks to the power map \eqref{powermap}; finally, $\pi_{\mathfrak{S}_\alpha}$ is the quotient map \eqref{map:quot1map}.
Gusein-Zade, Luengo and Melle-Hern{\'a}ndez  proved in \cite[Thm.~2]{GLMps} that there is a unique power structure 
\[
(A(t),m) \mapsto A(t)^{m}
\]
on $K_0(\Var_{\bfk})$ such that \Cref{eqn:power_formula} holds whenever $m$ along with all the coefficients of $A(t)$ are \emph{effective} (i.e.~classes of actual $\bfk$-varieties). 
There is an important `geometric interpretation' of the power structure, found by Gusein-Zade, Luengo and Melle-Hern{\'a}ndez. It goes as follows. Suppose $X$ is a $\bfk$-variety, and $(A_n)_{n>0}$ is a sequence of $\bfk$-varieties. Form the generating series $A(t) = 1+\sum_{n>0}[A_n]t^n$. Then the $n$-th coefficient of $A(t)^{[X]}$ is the class of the $\bfk$-variety
\[
Y_n = \coprod_{\alpha\vdash n} \left(\prod_i X^{\alpha_i} \setminus \Delta\right) \times \left(\prod_i A_i^{\alpha_i}\right) \Bigg{/}\mathfrak{S}_\alpha
\]
where $\mathfrak{S}_\alpha$ acts on each factor separately.

Attached to a power structure is the \emph{plethystic exponential}. It is the operator sending
\[
\begin{tikzcd}
f=\displaystyle\sum_{n>0}\,A_nt^n \arrow[mapsto]{r} & \Exp(f) = \displaystyle\prod_{n>0}\left(1-t^n\right)^{-A_n}.
\end{tikzcd}
\]
For instance, if $X$ is a $\bfk$-variety, one has
\[
\Exp([X]t) = (1-t)^{-[X]} = \zeta_X(t),
\]
where
\[
\zeta_X(t) = \sum_{n\geqslant 0}\,[\Sym^n(X)]t^n
\]
is the Kapranov zeta function \cite{Kapranov-zeta}.

\begin{remark}\label{rmk:ps-extends}
The power structure we described above has a unique extension to $K_0(\St^{\aff}_{\bfk})$, see \cite[Remark 3.7]{BenSven2}. Moreover, 
the geometric interpretation of the power structure on $K_0(\Var_{\bfk})$ extends to $K_0(\St^{\aff}_{\bfk})$, see \cite[Lemma 5]{BM15}. We shall use this fact in \Cref{thm:gen-fct-coh}.
\end{remark}

\section{The stack of coherent sheaves}\label{sec:coh(X/B)}
\subsection{\texorpdfstring{The stack $\CCoh(X/B)$ for a quasiprojective family}{}}\label{sec:general-stack}
Let $B$ be a locally noetherian scheme, $f\colon X\to B$ a quasiprojective morphism. For any base change $T \to B$ we shall assume $T$ to be locally noetherian.\footnote{The assumptions on $f$ and $T$ may be relaxed, at the expense of defining a family of coherent sheaves as a finitely presented $T$-flat quasicoherent sheaf (with proper support)
\cite[\href{https://stacks.math.columbia.edu/tag/08KA}{Tag 08KA}]{stacks-project}.}
\begin{notation}\label{notation:basechange}
Given a $B$-scheme $T \to B$, we use the shorthand notation 
\[
X_T = X \times_BT.
\] 
Also, if $\cF \in \Coh(X_T)$ and $h\colon T' \to T$ is a $B$-morphism, we denote by $\cF_{T'}$ the pullback of $\cF$ along the base change map $h_X \colon X_{T'} \to X_T$. It is a coherent sheaf by our assumptions.
\end{notation}

\begin{definition}\label{def:Coh(X/B)}
Define 
\[
\CCoh(X/B)
\]
to be the category whose objects are pairs
\[
\left(g\colon T \to B,\cF\right),
\]
where $\cF \in \Coh(X_T)$ is a family of coherent sheaves as in \Cref{def:family-coh-sheaves}, and whose morphisms
\[
\begin{tikzcd}
\left(g'\colon T' \to B,\cF'\right) \arrow{r} & \left(g\colon T \to B,\cF\right)
\end{tikzcd}
\]
between two objects are pairs
\[
\left(h\colon T' \to T, \phi \right)
\]
where $h$ is a $B$-morphism of schemes, and $\phi \colon \cF_{T'} \simto \cF'$ is an isomorphism in $\Coh(X_{T'})$. Here $\cF_{T'} = h_X^\ast \cF$ according to \Cref{notation:basechange}.
\end{definition}

It is proven in \cite[\href{https://stacks.math.columbia.edu/tag/08KA}{Tag 08KA}]{stacks-project} that the natural functor
\[
\begin{tikzcd}
p\colon \CCoh(X/B) \arrow{r} & \Sch_{B}
\end{tikzcd}
\]
sending $(T \to B, \cF)\mapsto (T \to B)$ defines a $B$-stack (cf.~\Cref{def:stack}). The root of this fact can be found in Grothendieck's theorem \cite[Sec.~B]{Grothendieck_fpqc} stating that quasicoherent sheaves (and their morphisms) satisfy \emph{fpqc descent}. See also \cite[Thm.~4.23]{MR2223406} or \cite[Sec.~4.3]{Olsson_book} for other references. 

One has the following general result (but see also \Cref{rmk:generalisations}).

\begin{theorem}
[{\cite[\href{https://stacks.math.columbia.edu/tag/08WB}{Tag 08WB}]{stacks-project}}] 
\label{thm:stack-coh}
The $B$-stack $\CCoh(X/B)$ is algebraic.
\end{theorem}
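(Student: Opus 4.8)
The plan is to deduce algebraicity from Artin's criteria, but in practice the cleanest route is to exploit the relationship between $\CCoh(X/B)$ and Grothendieck's Quot scheme, which supplies an explicit smooth presentation. First I would reduce to the case where $f\colon X\to B$ is projective: since $\CCoh$ is local on $B$ and, by quasiprojectivity, $X$ admits an open immersion into a projective $B$-scheme $\overline X$, one can either work directly with the locally-of-finite-type hypothesis or, as the paper does elsewhere, add the running assumptions that $f$ is projective with $f_\ast\OO_X=\OO_B$ holding universally. With those in place, fix an $f$-very ample $\OO_X(1)$.

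The key construction is the atlas. Given an object $(T\to B,\cF)$ of $\CCoh(X/B)$, the sheaf $\cF$ has a well-defined (locally constant) Hilbert polynomial $P$ on $T$, and after twisting, $\cF(m)$ is globally generated on the fibres for $m\gg 0$; thus $\cF$ admits, fppf-locally on $T$, a surjection $\OO_{X_T}(-m)^{\oplus N}\onto\cF$ for suitable $m,N$, i.e.\ it is a point of $\Quot_{X/B}(\OO_X(-m)^{\oplus N},P_m)$ for the shifted polynomial $P_m$. The key steps are then: (1) recall from \cite{Grothendieck_Quot,fga_explained} that each $\Quot_{X/B}(\OO_X(-m)^{\oplus N},P)$ is a (quasi-)projective $B$-scheme, hence algebraic; (2) observe that there is a natural morphism $\mathrm Q_{m,N}\defeq\coprod_P\Quot_{X/B}(\OO_X(-m)^{\oplus N},P)\to\CCoh(X/B)$ sending a quotient to its target sheaf, and that this morphism is smooth and surjective onto the open-and-closed substack of sheaves that are $m$-regular — smoothness because the fibres are, locally, quotients by the action of the group $\GL_N$ twisted by $\Hom(\cF,\cF(m))$-type obstruction-free deformations, or more precisely because the Quot functor represents a smooth-over-$\CCoh$ condition (the choice of generators); (3) let $m\to\infty$: the substacks of $m$-regular sheaves form an open cover of $\CCoh(X/B)$ (every coherent sheaf on the fibres is eventually $m$-regular), so assembling the $\mathrm Q_{m,N}$ over all $m$ yields a smooth surjective morphism from a scheme to $\CCoh(X/B)$; (4) check that the diagonal of $\CCoh(X/B)$ is representable, separated and of finite type — this follows from the fact that the isomorphism scheme $\mathbf{Isom}_T(\cF,\cF')$ between two flat families is an open subscheme of a linear scheme $\mathrm{Hom}_T(\cF,\cF')$, which is representable by a standard argument using $f_\ast$ of a twist and cohomology-and-base-change. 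Together, (3) and (4) verify the definition of an algebraic stack.

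The main obstacle I expect is step (2)–(3): proving that the map from the Quot schemes is smooth, and organising the colimit over $m$ and $N$ so that one genuinely gets a scheme atlas rather than merely an algebraic-space cover. Smoothness is really the statement that, given a square-zero extension $T\hookrightarrow T'$ and a family $\cF$ on $X_{T'}$ together with a system of generators of $\cF|_{X_T}$, the generators lift — which holds because $\cF$ is $T'$-flat and $\OO_{X_{T'}}(-m)^{\oplus N}$ is a projective object fppf-locally, so $\Hom(\OO^{\oplus N},\cF)\to\Hom(\OO^{\oplus N},\cF|_{X_T})$ is surjective after possibly shrinking $T$; making this precise and uniform is where the care lies. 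A subtlety in step (4) is that separatedness and finiteness of the diagonal is exactly what makes $\CCoh$ an \emph{algebraic} stack in the sense of \cite{LMB} rather than a higher stack; this is handled by the representability of $\mathbf{Isom}$. Finally, I would remark that all of this is precisely the content of \cite[\href{https://stacks.math.columbia.edu/tag/08WB}{Tag 08WB}]{stacks-project} and \cite[Sec.~B]{Grothendieck_fpqc}, so in the paper itself one may simply cite the stacks-project proof, as is done in \Cref{thm:stack-coh}; the expanded argument above is the shape of what lies behind that reference, and a more hands-on version (under the projectivity assumptions) reappears in \Cref{thm:coh-is-algebraic}.
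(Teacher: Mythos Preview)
Your proposal is correct and aligns with the paper's treatment: the paper does not prove \Cref{thm:stack-coh} in the stated generality at all, but simply cites \cite[\href{https://stacks.math.columbia.edu/tag/08WB}{Tag 08WB}]{stacks-project} and then, in \Cref{thm:coh-is-algebraic}, gives the hands-on Quot-scheme atlas construction under the extra hypotheses you name (projective $f$, $f_\ast\OO_X=\OO_B$ universally). You correctly anticipated both the citation strategy and the shape of the explicit argument.

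One small difference worth noting: for smoothness of the atlas in \Cref{thm:coh-is-algebraic}, the paper does not argue via lifting generators along square-zero extensions as you suggest, but instead identifies the fibre product $U\times_{\curly C}\mathrm Q_{m,d}$ directly with a $\GL_d$-torsor $\Iso(\OO_{U(m,d)}^{\oplus d},f_{U(m,d)\ast}\cF_{U(m,d)})$ over an open $U(m,d)\subset U$, from which smoothness and surjectivity of each $q_{m,d}$ is immediate. This sidesteps the deformation-theoretic bookkeeping you flagged as the main obstacle, and is the cleaner route once the conditions \ref{cond1}--\ref{cond2} defining $\mathrm Q_{m,d}$ have been set up.
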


We will not give a proof of this general result. We will, however, provide a sketch of its proof under some additional assumptions, following \cite[Thm.~4.6.2.1]{LMB}. In this situation, we shall see an explicit atlas (cf.~\Cref{def:alg-dm-atlas}) for the stack $\CCoh(X/B)$, in terms of Quot schemes.

\begin{theorem}\label{thm:coh-is-algebraic}
Let $B$ be a noetherian scheme, $f \colon X \to B$ a projective morphism such that $f_\ast \OO_X = \OO_B$ holds universally. Then the $B$-stack $\CCoh(X/B)$ is algebraic and has affine diagonal.
\end{theorem}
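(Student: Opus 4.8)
The plan is to follow \cite[Thm.~4.6.2.1]{LMB}. Since $\CCoh(X/B)$ is already a $B$-stack for the fpqc topology, by fpqc descent for quasicoherent sheaves \cite[Tag 08KA]{stacks-project}, it remains to establish two things: that the diagonal $\Delta\colon\CCoh(X/B)\to\CCoh(X/B)\times_B\CCoh(X/B)$ is representable and affine, and that $\CCoh(X/B)$ admits a smooth surjective morphism from a $B$-scheme. The two extra hypotheses are precisely those under which the cited reference operates: projectivity of $f$ is what drives both steps, while $f_\ast\OO_X=\OO_B$ (universally) serves only to streamline the cohomology-and-base-change bookkeeping.

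\emph{Affine diagonal.} Fix a locally noetherian $B$-scheme $T$ and two families $\cF,\cF'\in\Coh(X_T)$; the base change of $\Delta$ along $(\cF,\cF')\colon T\to\CCoh(X/B)\times_B\CCoh(X/B)$ is the functor $\Iso_{X_T/T}(\cF,\cF')$ sending $T'\to T$ to the set of $\OO_{X_{T'}}$-linear isomorphisms $\cF_{T'}\simto\cF'_{T'}$. I would first invoke Grothendieck's representability theorem for the relative $\underline{\Hom}$ functor: since $X_T\to T$ is projective and $\cF,\cF'$ are $T$-flat with proper support, $T'\mapsto\Hom_{\OO_{X_{T'}}}(\cF_{T'},\cF'_{T'})$ is representable by a scheme affine over $T$ (locally on $T$ one writes down a twisted-free presentation of $\cF$ and applies cohomology and base change to the twists of $\cF'$). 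Then $\Iso_{X_T/T}(\cF,\cF')$ is the closed subscheme of $\underline{\Hom}_{X_T/T}(\cF,\cF')\times_T\underline{\Hom}_{X_T/T}(\cF',\cF)$ defined by $\phi\circ\psi=\id_{\cF'}$ and $\psi\circ\phi=\id_\cF$ (a closed condition, since the relevant $\underline{\Hom}$ schemes are affine, hence separated, over $T$); being closed in an affine $T$-scheme, it is itself affine over $T$. Thus $\Delta$ is representable and affine.

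\emph{The Quot atlas.} Set
\[
\Q=\coprod_{m,N,P}\Quot_{X/B}\bigl(\OO_X(-m)^{\oplus N},P\bigr),
\]
the disjoint union over all $m,N\in\BN$ and $P\in\BQ[z]$, and let $a\colon\Q\to\CCoh(X/B)$ be the morphism $[\OO_X(-m)^{\oplus N}\onto\cG]\mapsto[\cG]$; it is well-defined by \Cref{def:family-coh-sheaves}, and representable once $\Delta$ is. It remains to show $a$ is smooth and surjective. Surjectivity on geometric points is Serre's theorem: any coherent sheaf $\cG$ on a fibre of $X\to B$ becomes globally generated after a twist by $\OO_X(m)$, $m\gg 0$, hence is a quotient of some $\OO_X(-m)^{\oplus N}$, giving a point of $\Q$ over $[\cG]$. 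For smoothness, fix a family $\cF\in\Coh(X_T)$; then $\Q\times_{\CCoh(X/B)}T$ represents, on $T$-schemes, the functor of surjections $\OO_{X_{T'}}(-m)^{\oplus N}\onto\cF_{T'}$. Restricting to a connected component of $T$, on which the Hilbert polynomial of $\cF$ is constant, and taking $m\gg 0$ there, the sheaf $(f_T)_\ast(\cF(m))$ is locally free with formation commuting with base change, so $\underline{\Hom}_{X_T/T}(\OO_X(-m)^{\oplus N},\cF)\cong\underline{\Hom}_{\OO_T}(\OO_T^{\oplus N},(f_T)_\ast(\cF(m)))$ is the total space of a vector bundle over $T$, inside which the surjectivity of the universal map cuts out an open subscheme — which is therefore smooth over $T$. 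Hence $a$ is a smooth surjection from a scheme, and $\CCoh(X/B)$ is an algebraic $B$-stack with affine diagonal. The step I expect to be most delicate is precisely this last one: checking smoothness of the atlas means running Serre vanishing and cohomology–base-change uniformly over $T$, component by component, and controlling the dependence of the twist $m$ on the Hilbert polynomial.
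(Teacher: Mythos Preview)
Your diagonal argument is correct and matches the paper's. The gap is in the atlas: you take the \emph{full} Quot scheme $\Q=\coprod_{m,N,P}\Quot_{X/B}(\OO_X(-m)^{\oplus N},P)$ and assert that $a\colon\Q\to\CCoh(X/B)$ is smooth, but your argument only treats the components with $m\gg 0$ (and even then, only relative to a fixed test family). For small $m$ the forgetful map is in general \emph{not} smooth. Take $X=E$ an elliptic curve, $m=0$, $N=1$, and $P$ the Hilbert polynomial of $\OO_E$: then $\Quot_E(\OO_E,P)$ is the reduced point $[\id_{\OO_E}]$. Pull $a$ back along the Poincar\'e family $T=\Pic^0(E)\to\CCoh(E)$; the fibre product over $T$ is the scheme of trivialisations of the Poincar\'e bundle, which is a copy of $\BG_m$ supported over the single closed point $[\OO_E]\in T$. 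This is not flat over $T$, so $a$ is not smooth. The underlying obstruction is $H^1(\cF(m))\neq 0$: in the long exact sequence coming from $0\to\CK\to\OO_X(-m)^{\oplus N}\to\cF\to 0$, the tangent map $\Hom(\CK,\cF)\to\Ext^1(\cF,\cF)$ need not surject unless $\Ext^1(\OO_X(-m)^{\oplus N},\cF)=H^1(\cF(m))^{\oplus N}$ vanishes. Note also that one cannot simply declare ``take $m\geqslant m_0(P)$'': the family of all coherent sheaves with fixed Hilbert polynomial $P$ is unbounded (already on $\BP^1$, the bundles $\OO(a)\oplus\OO(-a)$ all share $P$ but have regularity tending to infinity).

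The paper's remedy, following \cite[Thm.~4.6.2.1]{LMB}, is to build the vanishing into the atlas rather than into the argument: one passes to the open subscheme $\mathrm{Q}_{m,d}\subset\Quot_{X/B}(\OO_X(-m)^{\oplus d})$ of quotients $\alpha\colon\OO_{X_U}(-m)^{\oplus d}\onto\cF$ with $R^{>0}f_{U\ast}\cF(m)=0$ and $f_{U\ast}\alpha\colon\OO_U^{\oplus d}\simto f_{U\ast}\cF(m)$. The hypothesis $f_\ast\OO_X=\OO_B$ (universally) is exactly what makes the second condition meaningful via adjunction. With these conditions in force, the pullback of $\mathrm{Q}_{m,d}\to\CCoh(X/B)$ along any $T\to\CCoh(X/B)$ becomes the frame bundle $\Iso(\OO_{U(m,d)}^{\oplus d},f_\ast\cF(m))$ over the open locus $U(m,d)\subset T$ where the two conditions hold, i.e.\ a $\GL_d$-torsor over an open subscheme --- visibly smooth. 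Serre's theorem still gives surjectivity of $\coprod_{m,d}\mathrm{Q}_{m,d}\to\CCoh(X/B)$. Your final paragraph correctly flags this step as the delicate one; the resolution is to shrink the atlas, not to push Serre vanishing uniformly.
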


\begin{proof}
We split the proof in several steps. We grant the fact (mentioned before the statement of \Cref{thm:stack-coh}) that $\CCoh(X/B)$ satisfies the stack axioms. Let us write $\curly{C}= \CCoh(X/B)$ to ease the notation.

\smallbreak
\noindent
\textbf{Step 1: the diagonal.} Here, we verify that the diagonal
\[
\begin{tikzcd}
\Delta \colon \curly{C}\arrow{r} & \curly{C}\times_B \curly{C}
\end{tikzcd}
\]
is representable and affine. Our goal is to check that if $T \to B$ is a $B$-scheme and $\cF,\cG$ are $T$-flat coherent sheaves on $X_T = X \times_BT$, then the set-valued functor
\[
\begin{tikzcd}[row sep=tiny]
  \Sch_{T}^{\opp} \arrow{r}{\vartheta^T_{\cF,\cG}} & \Sets \\
  (U \to T)\arrow[mapsto]{r} & \Iso_{X_U}(\cF_U,\cG_U)
\end{tikzcd}
\]
is represented by an affine $T$-scheme. 

By noetherian descent, we may assume that $T$ is noetherian, and, in fact, that $T=B$. This implies that $X$ is noetherian as well, since $X \to B$ is projective.

\smallbreak
\noindent
\textbf{Step 1.1: the functor $\mathsf{Hom}(\cF,\cG)$.} We apply \cite[Cor.~7.7.8, Rmq.~7.7.9]{EGA32} (see also \cite[Thm.~5.8]{Nit} for another reference), which implies the following: if $B$ noetherian, $X \to B$ is projective, and $\cF,\cG$ are coherent sheaves on $X$, with $\cG$ flat over $B$, then the functor 
\[
\begin{tikzcd}[row sep=tiny]
  \Sch_{B}^{\opp} \arrow{r}{\mathsf{Hom}(\cF,\cG)} & \Sets \\
  (U \to B)\arrow[mapsto]{r} & \Hom_{X_U}(\cF_U,\cG_U)
\end{tikzcd}
\]
is represented by a linear $B$-scheme, i.e.~an affine finite type $B$-scheme of the form 
\[
\begin{tikzcd}
\mathbf V_{\cF,\cG} = \Spec_{\OO_B} \Sym \CV \arrow{r} & B,
\end{tikzcd}
\]
where $\CV \in \Coh(B)$. Since $\cF$ and $\cG$ are both $B$-flat, the conclusion on representability holds also for $\mathsf{Hom}(\cG,\cF)$, $\mathsf{Hom}(\cF,\cF)$ and $\mathsf{Hom}(\cG,\cG)$.

\smallbreak
\noindent
\textbf{Step 1.2: back to $\vartheta^B_{\cF,\cG}$.}
The closed subfunctor $\CY \subset \mathsf{Hom}(\cF,\cG) \times \mathsf{Hom}(\cG,\cF)$ sending $U \to B$ to the set
\[
\CY(U) = \Set{\left(\alpha \colon \cF_U \to \cG_U, \beta\colon \cG_U \to \cF_U\right)|\alpha\circ \beta = \id_{\cG_U},\, \beta \circ \alpha = \id_{\cF_U}}
\]
is representable (by the last sentence of the previous step) by a closed subscheme $Y$ of the affine $B$-scheme $\mathbf V_{\cF,\cG} \times_B \mathbf V_{\cG,\cF}$. It agrees with the functor $\vartheta^B_{\cF,\cG}$. More precisely, the projection $(\alpha,\beta) \mapsto \alpha$ defines a finite type, formally \'etale monomorphism (i.e.~an open immersion) $Y \to \mathbf V_{\cF,\cG}$ with image precisely $\vartheta^B_{\cF,\cG}$. 

Therefore the diagonal is representable and affine, since $Y\to B$ is affine.\footnote{See also \cite[\href{https://stacks.math.columbia.edu/tag/08K9}{Tag 08K9}]{stacks-project} for a generalisation.}

\smallbreak
\noindent
\textbf{Step 2: The atlas.} 
Fix an $f$-very ample invertible sheaf $\OO_X(1)$ on our projective family $f \colon X \to B$. Given $\cF \in \Coh(X)$, Serre's theorem ensures that, at least Zariski locally on the base, $\cF$ receives a surjection
\[
\begin{tikzcd}
\OO_X(-m)^{\oplus d}\arrow[two heads]{r} & \cF,
\end{tikzcd}
\]
for some $d>0$ and some $m \in \BZ$. For any such pair $(m,d)$, construct the subfunctor 
\[
\mathsf Q_{m,d} \subset \mathsf{Quot}_{X/B}(\OO_X(-m)^{\oplus d})
\]
declaring $\mathsf Q_{m,d}(U \to B)$ to be the set of isomorphism classes of $U$-flat quotients 
\begin{equation}\label{eqn:surj-m,d}
\begin{tikzcd}
\OO_{X_U}(-m)^{\oplus d} \arrow[two heads]{r}{\alpha} & \cF \,\,\,\mbox{ on } X_U = X \times_BU  
\end{tikzcd}
\end{equation}
such that \begin{itemize}
    \item [\mylabel{cond1}{(i)}] $R^{>0}f_{U\ast}\cF(m) = 0$, and
    \item [\mylabel{cond2}{(ii)}] the induced map $f_{U\ast}\alpha \colon \OO_U^{\oplus d} \to f_{U\ast}\cF(m)$ is an isomorphism.
\end{itemize}
Here $f_U \colon X_U \to U$ is the base change of $f\colon X \to B$ along $U \to B$. The assumption that $f_\ast \OO_X = \OO_B$ holds universally allows us to make sense of Condition \ref{cond2} and to identify surjections as in \eqref{eqn:surj-m,d} with morphisms $\OO_U^{\oplus d} \to f_{U\ast}\cF(m)$ such that their adjoint map $\OO_{X_U}^{\oplus d} \to f_U^\ast f_{U\ast}\cF(m) \to \cF(m)$ is surjective.

To confirm that $\mathsf Q_{m,d}$ is, indeed, a subfunctor, consider a cartesian diagram
\[
\begin{tikzcd}
X_{U'}\arrow{d}{f_{U'}}\arrow{r}{h_X} & X_U\arrow{d}{f_U}\arrow{r}{} & X\arrow{d}{f} \\
U'\arrow{r}{h} & U\arrow{r} & B
\end{tikzcd}
\]
and fix $[\alpha\colon \OO_{X_U}(-m)^{\oplus d} \onto \cF] \in \mathsf Q_{m,d}(U \to B)$. The `pulllback' map sends this object to $[h_X^\ast \alpha \colon \OO_{X_{U'}}(-m)^{\oplus d} \onto h_X^\ast\cF]$. It is clear that Condition \ref{cond1} continues to hold after base change. The adjoint map to $h_X^\ast \alpha$ is
\[
\begin{tikzcd}
f_{U'\ast}h_X^\ast \alpha = h^\ast f_{U\ast}\alpha\colon \OO_{U'}^{\oplus d} \arrow{r} & h^\ast f_{U\ast}\cF(m) = f_{U'\ast}h_X^\ast\cF(m),
\end{tikzcd}
\]
where we have used that $f_{U\ast}$ commutes with base change thanks to Condition \ref{cond1}. It is therefore again an isomorphism, being the pullback of an isomorphism.

Now, $\mathsf Q_{m,d}$ is an open subfunctor (reason: Condition \ref{cond1} is open by the semicontinuity theorem, and Condition \ref{cond2} is open after imposing Condition \ref{cond1}), and hence, by representability of the Quot functor, it is representable by an open subscheme $\mathrm{Q}_{m,d}$ of $\Quot_{X/B}(\OO_X(-m)^{\oplus d})$. Moreover, there is a 1-morphism
\[
\begin{tikzcd}
\tau_{m,d} \colon \mathrm{Q}_{m,d} \arrow{r} & \curly{C}
\end{tikzcd}
\]
sending an arbitrary  $U$-valued point $[\OO_{X_U}(-m)^{\oplus d} \onto \cF] \in \mathrm{Q}_{m,d}(U\to B)$ to the object $\cF \in \curly{C}(U \to B)$. We can thus form the 1-morphism
\[
\begin{tikzcd}
\tau \colon \displaystyle\coprod_{m,d} \mathrm{Q}_{m,d} \arrow{r} & \curly{C}.
\end{tikzcd}
\]
We claim that $\tau$ is smooth and surjective, hence an atlas for $\curly{C}$. Surjectivity of $\tau$ follows from Serre's theorem: for $m\gg 0$ one has that $R^{>0}f_{U\ast}\cF(m)=0$, $f_{U\ast}\cF(m)$ is finite locally free and $f_U^\ast f_{U\ast}\cF(m) \to \cF(m)$ is surjective. Restricting to a trivialising open for $f_{U\ast}\cF(m)$, we see that $\cF$ belongs to $\mathrm{im}(\tau_{m,d})$, where $d = \rk f_{U\ast}\cF(m)$. So we are left with proving smoothness.

Let us consider a $B$-scheme $U$, and pick a $U$-valued point $U \to \curly{C}$, corresponding to $\cF \in \Coh(X_U)$. Let $U(m,d) \subset U$ be the largest open subset over which $R^{>0}f_{U\ast}\cF(m) = 0$, the evaluation map $f_U^\ast f_{U\ast}\cF(m) \to \cF(m)$ is surjective, and $f_{U\ast}\cF(m)$ is locally free of rank $d$ (all three are open conditions). Then there is a factorisation
\[
\begin{tikzcd}
    U \times_{\curly{C}} \mathrm{Q}_{m,d} \arrow[swap]{dr}{q_{m,d}}\arrow{rr}{\mathrm{pr_1}} & & U \\
    & U(m,d)\arrow[hook]{ur} &
\end{tikzcd}
\]
where the map $q_{m,d}$ is a $\GL_d$-torsor, canonically identified with the projection
\[
\begin{tikzcd}
\Iso(\OO_{U(m,d)}^{\oplus d},f_{U(m,d)\ast}\cF_{U(m,d)}) \arrow{r} & U(m,d).
\end{tikzcd}
\]
In particular, each $q_{m,d}$ is smooth and surjective. 
\end{proof}

\begin{remark}\label{rmk:generalisations}
The reader interested in further generalisations of the setup in which one can construct a stack of coherent sheaves can consult the papers \cite{zbMATH05081797,zbMATH06477021} and the references therein, or the Stacks Project \cite[\href{https://stacks.math.columbia.edu/tag/08KA}{Tag 08KA}]{stacks-project}.
\end{remark}

\subsection{The stack of 0-dimensional sheaves}
\label{sec:0-dim}
Let $X$ be a $\bfk$-variety. As recalled in \cite[Sec.~3.1]{Fantechi-Ricolfi-structural}, the  stack $\CCoh(X/\bfk)$, which is algebraic by \Cref{thm:stack-coh}, contains an open substack $\CCoh(X/\bfk)_0$ parametrising $0$-dimensional coherent sheaves. Openness follows immediately from semicontinuity of fibre dimension, and uses properness of the support. The stratification by the length $n=\chi(\CF)$ of the sheaves $\CF$ parametrised by $\CCoh(X/\bfk)_0$ yields open and closed substacks 
\[
\CCoh^n(X) \subset \CCoh(X/\bfk)_0.
\]
We refer to \cite[Sec.~3.1]{Fantechi-Ricolfi-structural} for full details. 
It is proven in \cite[Thm.~3.3]{Fantechi-Ricolfi-structural} that $\CCoh^n(X)$ is a global quotient stack $[A_{X,n}/\GL_n]$ if $X$ is projective, where $A_{X,n}$ is an open subscheme of $\Quot_X(\OO_X^{\oplus n},n)$.

\subsubsection{Sheaves supported on a locally closed subscheme}
\label{sec:support-on-subset}
Let $X$ be a $\bfk$-variety, $Z \into X$ a closed subscheme with ideal sheaf $\mathscr I_Z \subset \OO_X$. Let $Z_k \into X$ be the closed subscheme defined by $\mathscr I_Z^k$. Then the chain of closed immersions $Z=Z_1 \into Z_2 \into Z_3 \into \cdots$ induces, for every $n\geqslant 0$, a chain of closed immersions of algebraic stacks
\[
\begin{tikzcd}
\CCoh^n(Z)\arrow[hook]{r} & \CCoh^n(Z_2)\arrow[hook]{r} & \CCoh^n(Z_3)\arrow[hook]{r} & \cdots
\end{tikzcd}
\]
with the property that 
\[
\begin{tikzcd}
\CCoh^n(Z_n) \arrow{r}{\sim} & \CCoh^n(Z_m)
\end{tikzcd}
\]
is an isomorphism for every $m \geqslant n$. We then define
\[
\CCoh^n(X,Z) \defeq \CCoh^n(Z_n).
\]
The $\bfk$-valued points of $\CCoh^n(X,Z)$ are exactly the length $n$ sheaves on $X$ with set-theoretic support $Z$. Clearly this construction can be performed for any locally closed subscheme $Z \into X$, by fixing an open subset $U \subset X$ containing $Z$ as a closed subscheme. 

Furthermore, the same procedure leads, for every $r>0$, to the definition of a locally closed subscheme of $\Quot_X(\OO_X^{\oplus r},n)$ parametrising quotients supported set-theoretically on $Z$, namely one defines
\[
\Quot_X(\OO_X^{\oplus r},n,Z) \defeq \Quot_{Z_n}(\OO_{Z_n}^{\oplus r},n).
\]
When $r=1$ and $Z$ is reduced to a closed point $p \in X$, this leads to the definition of the punctual Hilbert scheme as
\[
\Hilb^n(X,p) = \Hilb^n(\Spec \OO_{X,p}/\mathfrak m_p^n).
\]
This is naturally a closed subscheme of $\Hilb^n(X)$, as the morphism $\Spec \OO_{X,p}/\mathfrak m_p^n \to X$ is a closed immersion.

\section{Coh-to-Chow and Quot-to-Chow morphisms}

\subsection{The symmetric product}\label{sec:sym^n(X)}
Let $S$ be a scheme. Let $X$ be a quasiprojective $S$-scheme, and fix an integer $n \geqslant 0$. Let $\FS_n$ denote the symmetric group on $n$ letters. The GIT quotient 
\[
\Sym^n(X/S) = X^n/\FS_n
\]
is the $n$-th \emph{symmetric product} of $X \to S$, where $X^n = X\times_S\cdots\times_SX$ ($n$ times). It is a quasiprojective $S$-scheme, representing the functor of families of relative effective $0$-cycles on $X\to S$, and is for this reason also called the \emph{Chow scheme} of relative $0$-cycles of degree $n$ on $X\to S$. We refer to Rydh \cite{Rydh1} for this approach.
We have 
\[
\Sym^0(X/S)=S,\qquad \Sym^1(X/S)=X.
\]

The key example for us is the case where $S=\Spec \bfk$ and $X$ is a $\bfk$-variety, in which case we set $\Sym^n(X) = \Sym^n(X/\Spec \bfk)$. The $\bfk$-points of $\Sym^n(X)$ can be represented as finite formal sums
\[
\sum_{j}n_j x_j
\]
of closed points $x_j \in X$, with nonnegative coefficients $n_j \in \BN$, subject to the condition $\sum_j n_j = n$.

\begin{remark}
If $X = \Spec A$ is an affine $\bfk$-variety, then $\Sym^n(X) = \Spec \,(A^{\otimes n})^{\FS_n}$, and the map $X^n \to \Sym^n(X)$ induced by the inclusion $(A^{\otimes n})^{\FS_n} \into A^{\otimes n}$ is a universal good quotient \cite{Mumford-GIT}.
\end{remark}

\begin{remark}
If $X$ is a smooth $\bfk$-variety of dimension $d>0$, the symmetric product $\Sym^n(X)$ is smooth if and only if $d=1$ or $n\leqslant 1$. In general, $\Sym^n(X)$ has finite quotient singularities, which is typically hard to control.
\end{remark}

\begin{lemma}
\label{rmk:Sym(closed-open)}
Let $X$ be a $\bfk$-variety. If $\iota\colon Z \into X$ is a closed (resp.~open, resp.~locally closed) subscheme of $X$, there is a canonical closed (resp.~open, resp.~locally closed) immersion
\[
\begin{tikzcd}
    \Sym^n(\iota) \colon \Sym^n(Z) \arrow[hook]{r} & \Sym^n(X).
\end{tikzcd}
\]
\end{lemma}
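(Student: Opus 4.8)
The plan is to construct the map at the level of the $n$-fold products and then descend it to the symmetric quotients, checking in each case (closed, open, locally closed) that the resulting morphism has the desired property. First I would observe that a closed (resp.\ open, resp.\ locally closed) immersion $\iota\colon Z\into X$ induces, by taking fibre products over $\Spec\bfk$, a morphism $\iota^n\colon Z^n\to X^n$ which is again a closed (resp.\ open, resp.\ locally closed) immersion; for the closed and open cases this is standard stability of immersions under products, and the locally closed case follows by factoring $\iota$ as a closed immersion into an open subscheme and composing. The morphism $\iota^n$ is manifestly $\FS_n$-equivariant for the permutation actions on source and target, since it is built diagonally from $\iota$ on each factor.

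Next I would pass to the GIT quotients. Because $X^n\to\Sym^n(X)$ and $Z^n\to\Sym^n(Z)$ are universal good quotients for the $\FS_n$-actions (as recalled in the remark preceding the lemma, using that $X$, $Z$ are quasiprojective over $\bfk$), the equivariant morphism $\iota^n$ descends uniquely to a morphism $\Sym^n(\iota)\colon\Sym^n(Z)\to\Sym^n(X)$ fitting into the obvious commutative square with the quotient maps. Uniqueness of this descent gives the claimed canonicity, and also functoriality, though the latter is not needed here.

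The remaining point — which I expect to be the only real content — is to check that $\Sym^n(\iota)$ inherits the immersion property. For the \emph{open} case this is easy: $Z^n$ is the preimage of an open subset, and since the quotient maps are open (good quotients by a finite group are open), $\Sym^n(Z)$ is identified with the open subscheme of $\Sym^n(X)$ that is the image of $Z^n$; more precisely $\Sym^n(Z)=\Sym^n(X)\setminus\pi(X^n\setminus Z^n)$ where $\pi\colon X^n\to\Sym^n(X)$, and one checks $Z^n=\pi^{-1}(\Sym^n(Z))$ so that $\Sym^n(Z)$ is the good quotient of $Z^n$, forcing the map to be an open immersion. For the \emph{closed} case, one uses that $\FS_n$-invariant closed subsets of $X^n$ correspond to closed subschemes of $\Sym^n(X)$: the scheme-theoretic image of $\iota^n$ is $\FS_n$-invariant, and since $\iota^n$ is a closed immersion with $Z^n\to\Sym^n(X)$ affine-locally given by a surjection of invariant rings, $\Sym^n(Z)=\Spec((B^{\otimes n})^{\FS_n})$ is cut out in $\Sym^n(X)=\Spec((A^{\otimes n})^{\FS_n})$ by the image of the invariants of the defining ideal — here I would argue affine-locally, writing $X=\Spec A$, $Z=\Spec A/I$, and using that taking $\FS_n$-invariants is exact in characteristic $0$ (the Reynolds operator) to see that $(A/I)^{\otimes n}\,$'s invariants form a quotient of $(A^{\otimes n})^{\FS_n}$. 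The \emph{locally closed} case then follows by composing the open and closed cases via the factorisation of $\iota$. The main obstacle is thus the closed case, where exactness of invariants under the finite group action — valid since $\operatorname{char}\bfk=0$ — is what makes the surjectivity of rings of invariants work; in positive characteristic this step would fail and the statement would need more care.
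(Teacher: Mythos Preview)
Your proposal is correct and follows essentially the same approach as the paper: descend the $\FS_n$-equivariant product map $\iota^n$ to the quotients, handle the closed case affine-locally via exactness of taking $\FS_n$-invariants in characteristic~$0$, and obtain the locally closed case by composition. The only minor difference is in the open case, where the paper argues more succinctly that $\Sym^n(\iota)$ is \'etale and injective (hence an open immersion), while you go through a saturation-and-openness argument; both are valid.
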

\begin{proof}
The morphism $\Sym^n(\iota)$ is obtained from $\mathfrak S_n$-equivariance of the product map $\iota^n\colon Z^n \into X^n$. As for the case of a closed immersion, it is enough to prove the claim in the affine case, since closed immersions are affine-local on the target. In the affine case, for a closed immersion induced by a ring homomorphism $A \onto B$, we have a surjection $A^{\otimes n} \onto B^{\otimes n}$, which stays surjective after taking $\mathfrak S_n$-invariants since, in characteristic 0, taking invariants is an exact functor for all finite groups. 

If $\iota$ is an open immersion, it is easily checked that $\Sym^n(\iota)$ is \'etale and injective, hence an open immersion. 

The locally closed case is a combination of the previous two.
\end{proof}

\subsubsection{\texorpdfstring{Stratification by partitions for $\Sym$}{}}
There is a (locally closed) stratification of the symmetric product 
\begin{equation}\label{eqn:sm-strata-alpha}
\Sym^n(X) = \coprod_{\alpha\vdash n}\Sym^n_\alpha(X)
\end{equation}
indexed by partitions $\alpha$ of the integer $n$. We use the notation $\alpha = (1^{\alpha_1}\cdots i^{\alpha_i}\cdots n^{\alpha_n})$ for a partition, meaning that $\alpha$ consists of $\alpha_i$ parts of size $i$ for every $i=1,\ldots,n$. In particular, $n = \sum_i i \alpha_i$. For instance, the `deep stratum' $\Sym^n_{(n^1)}(X)$ is a copy of $X$, embedded in $\Sym^n(X)$ as $p \mapsto n\cdot p$. This stratum is closed and is sometimes called the `small diagonal'.

\subsection{The Coh-to-Chow morphism and its fibres}\label{sec:coh-to-sym}
Let $X \to S$ be a quasiprojective morphism. The (relative) \emph{Coh-to-Chow morphism}, also called \emph{support map}, is the $S$-morphism
\begin{equation}\label{support-map}
\begin{tikzcd}
\supp^n_{X/S}\colon \CCoh^n(X/S) \arrow{r} & \Sym^n(X/S)
\end{tikzcd} 
\end{equation}
taking (in each fibre) a $0$-dimensional coherent sheaf $\CF$ to its support, namely the 0-cycle
\[
\cycle(\CF) = \sum_{x \in X}\length_{\OO_{X,x}}\CF_x\cdot x.
\]
The general construction of \eqref{support-map}, under minimal assumptions, is due to Rydh \cite{Rydh1}. We review Rydh's construction in \Cref{sec:supp-map}. See also \cite[Sec.~4.2]{Fantechi-Ricolfi-structural} for a construction of the `naive' Coh-to-Chow map (in characteristic $0$), namely
\[
\begin{tikzcd}
\CCoh^n(X)_{\red} \arrow{r} & \Sym^n(X).
\end{tikzcd}
\]

\begin{remark}\label{rmk:coh-open}
If $X$ is a $\bfk$-variety and $\iota\colon U \into X$ is an open immersion, the commutative diagram
\[
\begin{tikzcd}[row sep=large,column sep=large]
\CCoh^n(U)\arrow[swap]{d}{\supp_U^n}\arrow[hook]{r} & \CCoh^n(X)\arrow{d}{\supp_X^n} \\
\Sym^n(U)\arrow[hook]{r}{\Sym^n(\iota)} & \Sym^n(X)
\end{tikzcd}
\]
is 2-cartesian.
\end{remark}

By \Cref{rmk:Sym(closed-open)}, any locally closed immersion $Z \into X$ yields a locally closed substack
\[
\begin{tikzcd}
\CCoh^n(X)_Z = \CCoh^n(X) \times_{\Sym^n(X)} \Sym^n(Z) \arrow[hook]{r} & \CCoh^n(X).
\end{tikzcd}
\]
We refer to $\CCoh^n(X)_Z$ as the stack of length $n$ sheaves on $X$ \emph{supported on} $Z$. If $U$ is open in $X$, we have $\CCoh^n(X)_U = \CCoh^n(U)$ by \Cref{rmk:coh-open} (see also \cite[Sec.~2.3.1]{Fantechi-Ricolfi-structural}).

An important special case is when $Z \into X$ is the inclusion of a closed point $p \in X$. In this case, we obtain the closed substack
\begin{equation}\label{def:punctual-stack}
\begin{tikzcd}
\CCoh^n(X)_p \arrow[hook]{r} &  \CCoh^n(X)
\end{tikzcd}
\end{equation}
parametrising sheaves entirely supported at $p$. We shall observe that the motivic class of $\CCoh^n(X)$ is, for a smooth variety $X$ of dimension $d$, entirely determined by the classes of $\CCoh^i(\BA^d)_0$ for $i \leqslant n$, and by the motivic class of $X$.

\begin{remark}
\label{rmk:supp-Z}
Let $Z \subset X$ be a locally closed subscheme. It is immediate to check that, at the level of geometric points, there is no difference between the closed substacks $\CCoh^n(X)_Z$ and $\CCoh^n(X,Z)$. This implies that there is an isomorphism at the level of reduced structures, namely
\[
\begin{tikzcd}
\CCoh^n(X,Z)_{\red} \arrow{r}{\sim} & (\CCoh^n(X)_Z)_{\red}.
\end{tikzcd}
\] 
When $Z$ is open, there is no need to pass to the reductions, as both stacks agree with $\CCoh^n(U)$.
\end{remark}

\subsubsection{\texorpdfstring{Stratification by partitions for $\CCoh$}{}}
The stratification \eqref{eqn:sm-strata-alpha}, combined with \Cref{rmk:Sym(closed-open)}, allows us to define, for each partition $\alpha\vdash n$, the locally closed substack 
\[
\begin{tikzcd}
\CCoh^n_\alpha(X) = \CCoh^n(X) \times_{\Sym^n(X)}\Sym^n_\alpha(X) \arrow[hook]{r} & \CCoh^n(X),
\end{tikzcd}
\]
parametrising sheaves whose support is distributed according to $\alpha$. Clearly, for every closed point $p \in X$, the `punctual locus' $\CCoh^n(X)_p$ sits inside the `deepest stratum'
\[
\begin{tikzcd}
\CCoh^n_{(n^1)}(X) \arrow[hook]{r} & \CCoh^n(X),
\end{tikzcd}
\]
the preimage of the small diagonal $X \into \Sym^n(X)$, as a closed substack. We recall the following structural property of the stratification by partitions.

\begin{theorem}[{\cite[Thm.~5.8]{Fantechi-Ricolfi-structural}}]\label{prop:zar-triviality}
Let $X$ be a smooth variety of dimension $d$.
\begin{itemize}
\item [\mylabel{punctual-1}{(1)}] If $X=\BA^d$, the projection
\[
\begin{tikzcd}
\supp^n_{\BA^d} \colon \CCoh^n_{(n^1)}(\BA^d) \arrow{r} & \BA^d
\end{tikzcd}
\]
is a \emph{trivial} fibration with fibre $\CCoh^n(\BA^d)_0$, where $0 \in \BA^d$ denotes the origin.
\item [\mylabel{punctual-2}{(2)}] The projection 
\[
\begin{tikzcd}
\supp^n_{X} \colon \CCoh^n_{(n^1)}(X) \arrow{r} & X
\end{tikzcd}
\]
is \emph{Zariski locally trivial} with fibre $\CCoh^n(\BA^d)_0$.
\item [\mylabel{punctual-3}{(3)}] If $\alpha = (1^{\alpha_1}\cdots i^{\alpha_i}\cdots n^{\alpha_n})$ is a partition of $n$, then
\[
\begin{tikzcd}
\supp^n_X \colon \CCoh^n_\alpha(X) \arrow{r} & \Sym^n_\alpha (X)
\end{tikzcd}
\]
is \emph{\'etale locally trivial} with fibre $\prod_i \CCoh^i(\BA^d)_0^{\alpha_i}$.
\end{itemize}
\end{theorem}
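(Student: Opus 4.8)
The plan is to prove \ref{punctual-1} directly and then bootstrap: \ref{punctual-2} follows from \ref{punctual-1} by passing to étale coordinates, and \ref{punctual-3} from \ref{punctual-2} via a natural étale cover of $\Sym^n_\alpha(X)$. For \ref{punctual-1}, the translation group $\BG_a^d$ acts on $\BA^d$, and the assignment $(v,\CF)\mapsto (t_v)_\ast\CF$, with $t_v(x)=x+v$ an isomorphism and hence length-preserving, lifts this action to $\CCoh^n(\BA^d)$ and makes $\supp^n_{\BA^d}$ equivariant for the induced action on $\Sym^n(\BA^d)$. The small diagonal $\BA^d\into\Sym^n(\BA^d)$ is precisely the orbit of $n\cdot 0$, with trivial stabiliser, which already shows that $\CCoh^n_{(n^1)}(\BA^d)$, the preimage of the small diagonal, is set-theoretically swept out by translates of the fibre $\CCoh^n(\BA^d)_0$ over $0$. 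I would upgrade this to an isomorphism of stacks over $\BA^d$ by exhibiting the morphism $\BA^d\times\CCoh^n(\BA^d)_0\to\CCoh^n_{(n^1)}(\BA^d)$ through a universal translation: pull the universal sheaf on $\BA^d\times\CCoh^n(\BA^d)_0$ back along $(x,v,[\CF])\mapsto(x-v,[\CF])$, which is flat over $\BA^d\times\CCoh^n(\BA^d)_0$ with each fibre a translation, so that the result is a family whose support cycle over $(v,[\CF])$ equals $n\cdot v$. Its inverse sends a $T$-family $\CF$ to $\bigl(v,(t_{-v})_\ast\CF\bigr)$ with $v=\supp^n_{\BA^d}(\CF)\colon T\to\BA^d$ (read through $\Sym^n_{(n^1)}(\BA^d)=\BA^d$), and the two maps are mutually inverse.

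For \ref{punctual-2}, note first that $\CCoh^n_{(n^1)}(X)|_U=\CCoh^n_{(n^1)}(U)$ for any open $U\subseteq X$ by \Cref{rmk:coh-open}, so it suffices to trivialise over the members of a Zariski cover of $X$ by opens $U$ admitting an étale morphism $\varphi\colon U\to\BA^d$; such opens cover $X$ because $X$ is smooth of dimension $d$. By \Cref{rmk:ev-map-and-annihilation}, a length $n$ sheaf supported at a point is killed by the $n$-th power of the maximal ideal, hence is scheme-theoretically supported on the $n$-th infinitesimal neighbourhood of its support, and an étale morphism restricts to an isomorphism on $n$-th infinitesimal neighbourhoods of corresponding sections. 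Concretely, for a $T$-family of $(n^1)$-supported sheaves on $U$ the support map produces a section $\sigma\colon T\to U$; the family lives on the $n$-th infinitesimal neighbourhood $\Gamma_\sigma^{(n)}\into U\times T$ of the graph of $\sigma$; and $\varphi\times\id_T$ restricts to an isomorphism $\Gamma_\sigma^{(n)}\xrightarrow{\ \sim\ }\Gamma_{\varphi\circ\sigma}^{(n)}$ (a section of an étale morphism is an open immersion, and an étale universal homeomorphism is an isomorphism). This gives an equivalence $\CCoh^n_{(n^1)}(U)\simeq\CCoh^n_{(n^1)}(\BA^d)\times_{\BA^d}U$ over $U$, and combining with \ref{punctual-1} yields $\CCoh^n_{(n^1)}(X)|_U\simeq U\times\CCoh^n(\BA^d)_0$.

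For \ref{punctual-3}, let $X^\circ_\alpha\subseteq X^{\sum_i\alpha_i}$ be the open locus of ordered tuples of pairwise distinct closed points, the $\sum_i\alpha_i$ coordinates being grouped into blocks of sizes $\alpha_1,\dots,\alpha_n$; the tautological map $X^\circ_\alpha\to\Sym^n_\alpha(X)$, taking the $j$-th coordinate of the $i$-th block to a point of multiplicity $i$, is a $\prod_i\FS_{\alpha_i}$-torsor, in particular étale and surjective. After pulling $\supp^n_X$ back along this map, a $0$-dimensional sheaf whose support cycle consists of $\sum_i\alpha_i$ distinct points decomposes canonically into the direct sum of its localisations at those points, and this persists in families: the support, being finite over the base and separated into $\sum_i\alpha_i$ disjoint sections, splits as a disjoint union of closed subschemes finite over the base, inducing the corresponding direct sum decomposition of the sheaf. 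Hence $\CCoh^n_\alpha(X)\times_{\Sym^n_\alpha(X)}X^\circ_\alpha$ is isomorphic, over $X^\circ_\alpha$, to the fibre product over $X^\circ_\alpha$ of the pullbacks of the stacks $\CCoh^i_{(i^1)}(X)\to X$ along the $\sum_i\alpha_i$ coordinate projections $X^\circ_\alpha\to X$; by \ref{punctual-2} each factor is Zariski locally trivial with fibre $\CCoh^i(\BA^d)_0$, so the fibre product is Zariski locally trivial over $X^\circ_\alpha$ with fibre $\prod_i\CCoh^i(\BA^d)_0^{\alpha_i}$. Composing with the étale cover $X^\circ_\alpha\to\Sym^n_\alpha(X)$ gives étale local triviality of $\supp^n_X\colon\CCoh^n_\alpha(X)\to\Sym^n_\alpha(X)$ with the asserted fibre.

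The main obstacle is the family-level bookkeeping in \ref{punctual-2}: one must verify that the constructed map $U\times\CCoh^n(\BA^d)_0\to\CCoh^n_{(n^1)}(U)$ is an equivalence of stacks, i.e.\ that an arbitrary $T$-family is recovered, functorially in $T$, from the isomorphism $\Gamma_\sigma^{(n)}\simeq\Gamma_{\varphi\circ\sigma}^{(n)}$ and the universal family on $\CCoh^n(\BA^d)_0$. The clean input is that in étale coordinates the sheaf of principal parts of order $n$ on $U$ is free over $\OO_U$, so that $\Gamma_\sigma^{(n)}\to T$ is a trivial bundle with fibre $\Spec\bfk[x_1,\dots,x_d]/\mathfrak m^n$, compatibly with base change on $T$; granting this, both \ref{punctual-2} and the disjoint-support decomposition used in \ref{punctual-3} become routine (if somewhat lengthy) verifications about $0$-dimensional families.
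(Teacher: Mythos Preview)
The paper does not prove this theorem; it is quoted from the companion paper \cite{Fantechi-Ricolfi-structural}, so there is no in-paper argument to compare against. Your outline is correct and is the standard one: translation-equivariance for \ref{punctual-1}, \'etale coordinates for \ref{punctual-2}, and the $\prod_i\FS_{\alpha_i}$-torsor of ordered distinct tuples for \ref{punctual-3}. Two remarks. First, the step in \ref{punctual-2} asserting that a $T$-family in the deep stratum is scheme-theoretically supported on $\Gamma_\sigma^{(n)}$ is not literally a consequence of \Cref{rmk:ev-map-and-annihilation}, which is only fibrewise; over a nonreduced base one needs a Cayley--Hamilton-type argument using that the norm map lands in the small diagonal (for $d=1$ this is exactly Cayley--Hamilton for the action of the coordinate). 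Second, the family-level bookkeeping you flag can be bypassed entirely by the device the paper uses to prove \Cref{thm:punctual-smooth-point}: by \Cref{lemma:etale-comparison}, pushforward along the \'etale chart $\varphi\colon U\to\BA^d$ gives an \'etale morphism $\CCoh^n_{(n^1)}(U)\to\CCoh^n_{(n^1)}(\BA^d)$ (the deep stratum lies in $V_{\inj}$), hence an \'etale morphism to $\CCoh^n_{(n^1)}(\BA^d)\times_{\BA^d}U$ over $U$, which is bijective on points and stabiliser-preserving, and therefore an isomorphism by \cite[\href{https://stacks.math.columbia.edu/tag/0DU6}{Tag 0DU6}]{stacks-project}. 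Combined with \ref{punctual-1} this gives the Zariski trivialisation without ever naming $\Gamma_\sigma^{(n)}$.
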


We shall see in \Cref{app:et-vs-zar} that the \'etale locally trivial fibration of \ref{punctual-3} fails to be \emph{Zariski} locally trivial even in very simple situations.

\subsection{The Quot-to-Chow morphism}
As outlined in the introduction, any moduli space involving a flat family of $0$-dimensional sheaves of length $n$ with proper support admits a natural forgetful morphism into $\CCoh^n(X)$. This is the case, for instance, for the Quot scheme of points.

\begin{lemma}[{\cite[Lemma 3.1]{Fantechi-Ricolfi-structural}}]\label{lemma:quot-to-coh}
Let $X$ be a $\bfk$-variety. Fix $\CE \in \Coh(X)$. For every $n \in \BZ_{\geqslant 0}$ there is a morphism 
\begin{equation}\label{rho-morphism}
\begin{tikzcd}
\rho_{\CE,n} \colon \Quot_X(\CE,n) \arrow{r} & \CCoh^n(X)
\end{tikzcd}
\end{equation}
sending a point $[\CE \onto \CF]$ of the Quot scheme to the point $[\CF]$.
\end{lemma}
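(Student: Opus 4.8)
The plan is to construct $\rho_{\CE,n}$ on the level of functors of points: for an arbitrary locally noetherian $\bfk$-scheme $T$ I would say where $T$-valued points go, and then check that the assignment is compatible with base change, so that it defines a $1$-morphism from the scheme $\Quot_X(\CE,n)$ to the stack $\CCoh^n(X)$.

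First I would recall the two functors involved. A $T$-point of $\Quot_X(\CE,n)$ is an isomorphism class of a surjection $q\colon \CE_T\onto\CF$ in $\Coh(X_T)$, where $X_T = X\times T$, $\CE_T = \pr_X^\ast\CE$, the sheaf $\CF$ is $T$-flat, $\Supp(\CF)\to T$ is proper, and $\CF_t$ has Hilbert polynomial the constant $n$ for every $t\in T$; two such surjections are identified precisely when they have the same kernel. A $T$-point of $\CCoh^n(X)$ is, by \Cref{def:Coh(X/B)} together with \Cref{sec:0-dim}, a $T$-flat coherent sheaf $\CG\in\Coh(X_T)$ whose support is proper over $T$ with $0$-dimensional fibres of Euler characteristic $n$.

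Then I would set $\rho_{\CE,n}(q\colon \CE_T\onto\CF) = \CF$, i.e.\ forget the surjection, and verify that $\CF$ is a legitimate $T$-point of $\CCoh^n(X)$. Coherence of $\CF$ holds because $X_T$ is locally noetherian and $\CF$ is a quotient of the coherent sheaf $\CE_T$ (here one uses that $\bfk$-varieties are noetherian, so that $\CE_T$ is coherent); $T$-flatness is part of the data of a Quot point; the support $\Supp(\CF)\to T$ is proper, in fact finite, since the Hilbert polynomial of each $\CF_t$ being the constant $n$ forces $\dim\CF_t = 0$; and the same condition gives $\chi(\CF_t) = n$ for all $t$, so $\CF$ lands in the open-and-closed substack $\CCoh^n(X)\subset\CCoh(X/\bfk)_0$. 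Finally, since a $T$-point of $\Quot_X(\CE,n)$ is only an isomorphism class under the \enquote{same kernel} relation, and two surjections with the same kernel induce a canonical isomorphism of their targets compatible with the quotient maps, the class $[\CF]\in\CCoh^n(X)(T)$ is independent of the chosen representative, so $\rho_{\CE,n}$ is well defined on $T$-points.

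It remains to verify functoriality in $T$, which supplies the $2$-commutativity needed for a morphism into a stack. Given a $\bfk$-morphism $h\colon T'\to T$ with induced map $h_X\colon X_{T'}\to X_T$, the pullback of a Quot point $q\colon \CE_T\onto\CF$ is $h_X^\ast q\colon \CE_{T'}\onto h_X^\ast\CF$, which is still surjective because $h_X^\ast$ is right exact, while on the $\CCoh$ side morphisms are defined via exactly this pullback functor (\Cref{notation:basechange}). Hence $\rho_{\CE,n}$ intertwines the two pullback operations and descends to the desired $1$-morphism \eqref{rho-morphism}. I do not expect a genuine obstacle: the argument is bookkeeping with functors of points, and the only steps needing a word of care are the coherence of $\CE_T$, the passage from properness to finiteness of the support, and the well-definedness of $[\CF]$ modulo the \enquote{same kernel} equivalence.
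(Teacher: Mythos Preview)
Your argument is correct and is exactly the standard construction: one forgets the surjection and checks that the universal quotient sheaf satisfies the conditions defining a $T$-family in $\CCoh^n(X)$, with compatibility under pullback giving the required $1$-morphism. Note that the paper does not supply its own proof of this lemma---it is imported verbatim from the companion paper \cite{Fantechi-Ricolfi-structural}---so there is nothing to compare against here; your write-up is essentially what any proof of this routine representability-theoretic fact would look like.
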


Note that when $\CE = \OO_X^{\oplus n}$ the morphism \eqref{rho-morphism} is surjective. 

The previous lemma immediately yields the definition of the Quot-to-Chow morphism.

\begin{definition}\label{def:Quot-to-Chow}
Let $X$ be a $\bfk$-variety, $\CE \in \Coh(X)$ a coherent sheaf, $n \in \BZ_{\geqslant 0}$ an integer. The composition 
\begin{equation}\label{quot-to-chow-map}
\begin{tikzcd}
\mathsf{qc}_{\CE,n}\colon \Quot_X(\CE,n) \arrow{r}{\rho_{\CE,n}} & \CCoh^n(X) \arrow{rr}{\supp^n_X} && \Sym^n(X)
\end{tikzcd}    
\end{equation}
is called the \emph{Quot-to-Chow morphism}. When $\CE=\OO_X$, this is the \emph{Hilbert--Chow morphism} (see also \cite[Ex.~4.3.6]{modulisheaves} and \cite[Sec.~2.2]{Bertin-Hilb})
\[
\begin{tikzcd}
\mathsf{hc}_X^n \colon \Hilb^n(X) \arrow{r} & \Sym^n(X),
\end{tikzcd}  
\] 
sending a closed subscheme $Z \into X$ to its underlying $0$-cycle 
\[
\cycle(\OO_Z)=\sum_{z \in Z}\length_{\OO_{X,z}} \OO_{Z,z}\cdot z.
\]
\end{definition}

\subsection{Motivic decompositions} 

At this point, we do not know yet whether the stack of coherent sheaves admits a motivic class. The next result clarifies the situation. In fact, the next lemma is a special case of \cite[\href{https://stacks.math.columbia.edu/tag/08K9}{Tag 08K9}]{stacks-project}.

\begin{lemma}
\label{lemma:affine_stab}
Let $X$ be a $\bfk$-variety, $Z \into X$ a closed subscheme. Then the substacks $\CCoh^n(X)_Z \into \CCoh^n(X)$ and $\CCoh^n(X,Z) \into \CCoh^n(X)$ have affine geometric stabilisers.
\end{lemma}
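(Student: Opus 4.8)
The plan is to reduce the statement to the single assertion that the automorphism group scheme of a $0$-dimensional coherent sheaf is always an affine algebraic group, and then to compute that group scheme explicitly.

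First I would observe that neither construction produces new automorphisms. For $\CCoh^n(X)_Z = \CCoh^n(X)\times_{\Sym^n(X)}\Sym^n(Z)$, the factors $\Sym^n(X)$ and $\Sym^n(Z)$ are schemes, so for any geometric point $\xi$ of $\CCoh^n(X)_Z$ the automorphism group scheme $\underline{\Aut}_{\CCoh^n(X)_Z}(\xi)$ agrees with $\underline{\Aut}_{\CCoh^n(X)}(\xi')$, where $\xi'$ is the image of $\xi$ in $\CCoh^n(X)$. For $\CCoh^n(X,Z) = \CCoh^n(Z_n)$, a geometric point is simply a $0$-dimensional length $n$ coherent sheaf $\CF$ on $(Z_n)_K$ for some algebraically closed extension $\bfk\subset K$, and its automorphism group scheme is $\underline{\Aut}_{\OO_{(Z_n)_K}}(\CF)$. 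In both cases we are therefore reduced to showing: for a $0$-dimensional coherent sheaf $\CF$ on $Y_K$, where $Y$ is a quasiprojective $\bfk$-scheme, the functor $T\mapsto\Aut_{\OO_{Y_T}}(\CF_T)$ is represented by an affine group scheme of finite type over $K$.

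Next I would identify this group scheme. Since $\Supp(\CF)$ is a finite $K$-scheme, the sheaf $\lHom_{\OO_Y}(\CF,\CF)$ is supported on a finite $K$-scheme, so $R\defeq\End_{\OO_Y}(\CF)=\HH^0(Y,\lHom(\CF,\CF))$ is a finite-dimensional $K$-algebra. Because $\lHom$ commutes with flat base change (both sheaves are finitely presented) and $\HH^0$ of a coherent sheaf on a finite $K$-scheme is exact and commutes with base change, the functor $T\mapsto\Hom_{\OO_{Y_T}}(\CF_T,\CF_T)$ is represented by the affine space $\BA(R)=\Spec\Sym_K R^{\vee}$, and $\underline{\Aut}(\CF)$ is its open subfunctor $T\mapsto(R\otimes_K\OO(T))^{\times}$ of invertible elements. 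Fixing a $K$-basis of $R$, an element $r\in R$ is a unit if and only if left multiplication $L_r\colon R\to R$ is bijective, i.e.\ $\det L_r\neq 0$, and $\det L_r$ is a polynomial in the coordinate functions on $\BA(R)$; hence $\underline{\Aut}(\CF)$ is the principal open subscheme of $\BA(R)$ on which this polynomial is invertible, so it is affine and of finite type over $K$. This proves the lemma. (Alternatively, one may invoke \cite[\href{https://stacks.math.columbia.edu/tag/08K9}{Tag 08K9}]{stacks-project}, of which Step~1 of the proof of \Cref{thm:coh-is-algebraic} is a special case, applied after replacing $Y$ by a projective closure; this changes neither $\CF$ nor $\End(\CF)$.)

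The argument presents no serious obstacle. The only point that merits a line of care is the base-change claim making $\underline{\Hom}(\CF,\CF)$ a linear scheme over $K$, but this is immediate from the finiteness of $\Supp(\CF)$ (it is precisely Step~1.1 of the proof of \Cref{thm:coh-is-algebraic}); everything else is the standard description of the unit group of a finite-dimensional algebra as a principal affine open in affine space.
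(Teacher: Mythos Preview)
Your proof is correct. It rests on the same underlying idea as the paper's---automorphism groups sit as open subschemes of linear schemes, hence are affine---but carries it out more directly for the $0$-dimensional setting. The paper instead invokes Step~1 of the proof of \Cref{thm:coh-is-algebraic} (affine diagonal in the projective case), uses \Cref{lemma:iso-aut} to transfer this to locally closed substacks, and then reduces quasiprojective $X$ to projective $X$ via a compactification. You bypass both the projectivity hypothesis and the general diagonal argument by computing $\underline{\Aut}(\CF)$ explicitly as the unit group of the finite-dimensional $K$-algebra $\End(\CF)$, which only needs that $\Supp(\CF)$ is finite over $K$; this is more elementary and self-contained, at the price of being specific to dimension zero. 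Your parenthetical alternative is exactly the paper's route.
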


\begin{proof}
In the projective case, this follows immediately from Step 1 of the proof of \Cref{thm:coh-is-algebraic} and by \Cref{lemma:iso-aut}. The general quasiprojective case follows from the projective case by base change from any projective compactification of $X$.
\end{proof}

\begin{remark}
\label{rmk:etale-independence}
By \Cref{lemma:affine_stab}, for any $\bfk$-variety $X$, any closed subscheme $Z \into X$ and any integer $n \geqslant 0$ there are well-defined motivic classes
\[
[\CCoh^n(X)],\,\,[\CCoh^n(X)_Z],\,\,[\CCoh^n(X,Z)] \in K_0(\St_{\bfk}^{\aff}),
\]
and by \Cref{rmk:supp-Z}, there are identities
\begin{equation}
\label{eqn:mot-Z}
[\CCoh^n(X,Z)] = [\CCoh^n(X)_Z]
\end{equation}
in $K_0(\St^{\aff}_{\bfk})$.
\end{remark}

The scissor relations yield the following motivic decomposition.

\begin{prop}\label{prop:coh-dec}
Let $X$ be a $\bfk$-variety. Let $\iota\colon Z\into X$ be a closed subscheme, $U = X \setminus Z$ its complement. Then we have an equality in the Grothendieck ring of stacks 
\[
[\CCoh^n(X)]=\sum_{0\leqslant m\leqslant n} \,[\CCoh^m(U)] [\CCoh^{n-m}(X)_Z].
\]
\end{prop}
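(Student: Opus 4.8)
The plan is to stratify $\CCoh^n(X)$ according to how much of the length-$n$ sheaf is supported on the closed subscheme $Z$ versus on the open complement $U$. Every $0$-dimensional sheaf $\CF$ of length $n$ decomposes canonically as a direct sum $\CF \cong \CF_U \oplus \CF_Z$, where $\CF_U$ is the part with support in $U$ and $\CF_Z$ the part with support in $Z$; this is because the supports are disjoint closed points, so the localisations at points of $U$ and at points of $Z$ assemble into a product decomposition. If $\chi(\CF_U) = m$ then $\chi(\CF_Z) = n-m$, and this gives a set-theoretic (indeed, locally closed) stratification
\[
\CCoh^n(X) = \coprod_{0 \leqslant m \leqslant n} \CCoh^n(X)_{m,U},
\]
where $\CCoh^n(X)_{m,U}$ is the locally closed substack of sheaves with exactly length $m$ supported on $U$ (equivalently, exactly length $n-m$ supported on $Z$). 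Concretely, $\CCoh^n(X)_{m,U}$ is the preimage under $\supp_X^n$ of the locally closed subscheme $\Sym^m(U) \times \Sym^{n-m}(Z) \subset \Sym^n(X)$; by \Cref{rmk:Sym(closed-open)} these are indeed locally closed in $\Sym^n(X)$, and since $\supp_X^n$ is a morphism of algebraic stacks their preimages are locally closed substacks, and they form a stratification of $\CCoh^n(X)$.

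The key step is then to identify each stratum: I claim there is an isomorphism of algebraic stacks
\[
\CCoh^n(X)_{m,U} \;\simeq\; \CCoh^m(U) \times \CCoh^{n-m}(X)_Z.
\]
The map from right to left sends a pair $(\CG, \CH)$, with $\CG$ a length-$m$ sheaf on $U$ and $\CH$ a length-$(n-m)$ sheaf on $X$ set-theoretically supported on $Z$, to the sheaf $j_\ast\CG \oplus \CH$ on $X$, where $j\colon U \into X$ is the open immersion (note $j_\ast\CG$ is coherent because $\CG$ has finite, hence proper, support contained in the closed-in-$X$ locus of points of $U$ actually hit — more carefully, $\Supp(\CG)$ is a finite set of closed points of $X$ lying in $U$, so $j_\ast\CG$ is supported at finitely many closed points and is coherent). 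This construction is functorial in the base scheme $T$: a $T$-family splits, Zariski-locally on $T$ and in fact after noting the supports stay in disjoint opens, as a direct sum of a $T$-family supported over $U$ and one supported over $Z$; the disjointness is an open condition that is forced on the stratum. The inverse sends $\CF \mapsto (\CF|_U, \CF_Z)$ where $\CF_Z$ is the summand supported on $Z$. One should check compatibility with base change and that the decomposition $\CF = j_\ast(\CF|_U) \oplus \CF_Z$ holds for families, not just for points; this uses that over the stratum the two support loci remain disjoint and closed in $X_T$.

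Granting the stratification and the isomorphism of each stratum, the result follows from the scissor relations in $K_0(\St^{\aff}_\bfk)$: the classes are additive over locally closed stratifications (by \Cref{def:stratification} and the definition of the Grothendieck ring), and multiplicative over products, so
\[
[\CCoh^n(X)] = \sum_{0 \leqslant m \leqslant n} [\CCoh^n(X)_{m,U}] = \sum_{0 \leqslant m \leqslant n} [\CCoh^m(U)]\,[\CCoh^{n-m}(X)_Z].
\]
Here I use \Cref{lemma:affine_stab} to know that all stacks involved have affine geometric stabilisers, so their motivic classes are defined, and \Cref{rmk:coh-open} to identify $\CCoh^n(X)_U$ with $\CCoh^n(U)$.

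The main obstacle I expect is making the splitting $\CF \cong j_\ast(\CF|_U) \oplus \CF_Z$ work in families rather than just on $\bfk$-points: a priori a $T$-flat family could have its $U$-part and $Z$-part "collide" over special points of $T$, but this is exactly excluded by restricting to the stratum $\CCoh^n(X)_{m,U}$, where the fibrewise lengths over $U$ and over $Z$ are locally constant (equal to $m$ and $n-m$); combined with properness/finiteness of the support this should force the support of $\CF_{|U\text{-part}}$ to stay in a fixed clopen piece of $\Supp(\CF)_T$, giving the clean direct-sum decomposition and its functoriality. Verifying this flatness/decomposition claim carefully — likely by reducing to the affine local situation and using that $\Supp(\CF)\to T$ is finite, so it splits into the part lying over $U$ and the part lying over $Z$ after a suitable étale or even Zariski localisation on $T$ — is the technical heart of the argument.
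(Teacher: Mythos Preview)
Your outline is correct and matches the paper's approach: stratify $\CCoh^n(X)$ by the length $m$ of the part supported on $U$, identify each stratum with a product, and apply scissor relations. The difference is that you aim for a stronger statement than the paper does, and this is exactly the source of the obstacle you flag.

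The paper does \emph{not} prove an isomorphism of algebraic stacks
\[
\CCoh^n(X)_{m,U}\;\simeq\;\CCoh^m(U)\times\CCoh^{n-m}(X)_Z.
\]
Instead, it only constructs a morphism
\[
j_m\colon \CCoh^m(U)_{\red}\times\CCoh^{n-m}(X,Z)_{\red}\longrightarrow\CCoh^n(X)
\]
by direct sum (using the auxiliary stack $\CCoh^{n-m}(X,Z)=\CCoh^{n-m}(Z_{n-m})$ of \Cref{sec:support-on-subset}, whose objects are already sheaves on the thickening $Z_{n-m}$ and hence push forward cleanly), and then verifies that the coproduct $\coprod_m j_m$ is a \emph{geometric bijection} in the sense of \Cref{def:stratification}: an equivalence of groupoids on $\bfk$-points. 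This only requires the splitting $\CE=\CE_U\oplus\CE_Z$ and the identification $\Aut(\CE)\cong\Aut(\CE_U)\times\Aut(\CE_Z)$ for a single $\bfk$-sheaf $\CE$, which is elementary since the supports are disjoint closed points. No family argument is needed. The passage from $[\CCoh^{n-m}(X,Z)]$ to $[\CCoh^{n-m}(X)_Z]$ is then supplied by \eqref{eqn:mot-Z}.

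In other words, your ``technical heart''---making the decomposition $\CF\cong j_\ast(\CF|_U)\oplus\CF_Z$ work in $T$-families---is not a gap in the argument but an unnecessary strengthening. Motivic classes are insensitive to nilpotents, so working on reduced stacks and checking only $\bfk$-points suffices. Your worry that the $U$-part and $Z$-part might ``collide'' over special fibres is legitimate for an isomorphism of stacks, but irrelevant once you pass to reductions.
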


\begin{proof}
Let $i\colon U \into X$ denote the inclusion. Fix $m \in \set{0,1,\ldots,n}$. We first construct a morphism
\[
\begin{tikzcd}
\CCoh^m(U)_{\red} \times \CCoh^{n-m}(X,Z)_{\red} \arrow{r}{j_m} & \CCoh^{n}(X).   
\end{tikzcd}
\]
If $T$ is a reduced $\bfk$-scheme, denote by $\iota_T$ and $i_T$ the closed immersion $Z \times T \into X \times T$ and the open immersion $U \times T \into X \times T$, respectively. Given $T$-families $\cF \in \CCoh^m(U)(T)$ and $\cG \in \CCoh^{n-m}(X,Z)(T)$, one has that $i_{T\ast}\cF \oplus \iota_{T\ast}\cG$ is still $T$-flat with proper support over $T$ (cf.~\cite[Prop.~2.9]{Fantechi-Ricolfi-structural}). It thus defines a morphism $T \to \CCoh^{n}(X)$, so that $j_m$ is constructed.

Next, if $\CE \in \CCoh^{n}(X)(\bfk)$, let $\CE_Z$ be the maximal subsheaf of $\CE$ such that $\CE_Z \to \iota_\ast \iota^\ast \CE_Z$ is an isomorphism. Since $\CE_U = \CE/\CE_Z$ is entirely supported on $U$, we have $\CE = \CE_Z \oplus \CE_U$, thus the coproduct of the maps $j_0,j_1,\ldots,j_n$ defines a geometric bijection (cf.~\Cref{def:stratification})
\begin{equation}\label{strat-coh}
\begin{tikzcd}
\displaystyle\coprod_{0\leqslant m\leqslant n}\CCoh^m(U)_{\red} \times \CCoh^{n-m}(X,Z)_{\red} \arrow{r} & \CCoh^{n}(X)
\end{tikzcd}
\end{equation}
and by \Cref{eqn:mot-Z} we get the desired motivic identity.
\end{proof}

\begin{notation}\label{notation:quot-with-support}
Let $X$ be a $\bfk$-variety. Fix a coherent sheaf $\CE$ on $X$. Let $Z \into X$ be a locally closed subscheme. Set
\[
\Quot_X(\CE,n)_Z = \mathsf{qc}_{\CE,n}^{-1}(\Sym^n(Z)),
\]
where $\mathsf{qc}_{\CE,n}$ is the Quot-to-Chow morphism \eqref{quot-to-chow-map}. If $Z$ is a closed point $p \in X$, then the closed subscheme
\[
\begin{tikzcd}
\Quot_X(\CE,n)_p \arrow[hook]{r} & \Quot_X(\CE,n)
\end{tikzcd}
\]
is the preimage of the closed point $n\cdot p \in \Sym^n_{(n^1)}(X) \subset \Sym^n(X)$ on the small diagonal.
\end{notation}

\begin{remark}
If $U \into X$ is open, then
\[
\Quot_X(\CE,n)_U = \mathsf{qc}_{\CE,n}^{-1}(\Sym^n(U)) = \Quot_U(\CE|_U,n).
\]    
\end{remark}

The following decomposition is a consequence of \Cref{prop:coh-dec}.

\begin{corollary}\label{cor:quot-dec}
Let $X$ be a $\bfk$-variety. Let $Z\into X$ be a closed subscheme, $U = X\setminus Z$ its complement. Then, for every coherent sheaf $\CE$ over $X$ and every integer $n \geqslant 0$, we have an equality in the Grothendieck ring of $\bfk$-varieties
\[
[\Quot_X(\CE,n)] = \sum_{0\leqslant m\leqslant n}\, [\Quot_U(\CE|_U,m)] [\Quot_X(\CE,n-m)_Z].
\]
\end{corollary}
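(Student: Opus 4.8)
The plan is to argue exactly as in the proof of \Cref{prop:coh-dec}, but on the Quot scheme rather than on the stack: we stratify $\Quot_X(\CE,n)$ according to how the length of the universal quotient is distributed between $U$ and $Z$, and we identify the strata with products by means of the canonical splitting of a $0$-dimensional sheaf into its $U$-supported and $Z$-supported parts.

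For $0\leqslant m\leqslant n$, let $\Sigma_m\subseteq\Sym^n(X)$ be the locus of relative $0$-cycles the restriction to $U$ of which has degree exactly $m$. This locus is locally closed: it equals $C_{n-m}\setminus C_{n-m+1}$, where $C_k\subseteq\Sym^n(X)$ denotes the locus of cycles whose part supported on $Z$ has degree at least $k$; and $C_k$ is closed, being the image of the addition morphism $\Sym^k(Z)\times\Sym^{n-k}(X)\to\Sym^n(X)$, which is finite (hence closed) because $\Sym^k(Z)\into\Sym^k(X)$ is a closed immersion by \Cref{rmk:Sym(closed-open)} (with the conventions $C_k=\Sym^n(X)$ for $k\leqslant0$ and $C_k=\emptyset$ for $k>n$). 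Thus $\Sym^n(X)=\coprod_{0\leqslant m\leqslant n}\Sigma_m$ is a stratification (\Cref{def:stratification}), and, pulling it back along the Quot-to-Chow morphism $\mathsf{qc}_{\CE,n}$ of \Cref{def:Quot-to-Chow}, so is $\Quot_X(\CE,n)=\coprod_{0\leqslant m\leqslant n}S_m$ with $S_m\defeq\mathsf{qc}_{\CE,n}^{-1}(\Sigma_m)$ locally closed. The scissor relations then yield $[\Quot_X(\CE,n)]=\sum_{0\leqslant m\leqslant n}[S_m]$.

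It remains to prove $[S_m]=[\Quot_U(\CE|_U,m)]\,[\Quot_X(\CE,n-m)_Z]$, and since the motivic class is insensitive to nilpotents it suffices to produce an isomorphism $S_m\cong\Quot_U(\CE|_U,m)\times\Quot_X(\CE,n-m)_Z$ after passing to reduced structures. In one direction, given (over a reduced base $T$) a point $\CE|_U\otimes T\onto\CG$ of $\Quot_U(\CE|_U,m)$ and a point $\CE_T\onto\CH$ of $\Quot_X(\CE,n-m)_Z$ — so that the relative $0$-cycle of $\CH$ is supported on $Z$ — one pushes $\CG$ forward along the open immersion $i\colon U\into X$ and forms the quotient $\CE_T\onto i_{T\ast}\CG\oplus\CH$: here $i_{T\ast}\CG$ is coherent and $T$-flat, its support being finite over $T$, hence a closed subscheme of $X\times T$ disjoint from $Z\times T$; so by \cite[Prop.~2.9]{Fantechi-Ricolfi-structural} the direct sum is a genuine $T$-family of length $n$, the displayed map is surjective because its two target summands have disjoint supports, and the relative $0$-cycle of the sum has degree $m$ on $U$, so the corresponding point of $\Quot_X(\CE,n)$ lies in $S_m$. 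In the other direction, over $S_m$ the support $W$ of the universal quotient $\CF$ splits as $W=W_U\sqcup W_Z$ with $W_U=W\cap(U\times S_m)$ and $W_Z=W\cap(Z\times S_m)$: now $W_Z$ is closed in $W$, hence finite over $S_m$, while $W_U$ is open in $W$ and also proper over $S_m$ — no length can specialise from $U$ onto $Z$, because the degree on $U$ is the constant $m$ over $S_m$, so the valuative criterion applies — whence $W_U\to S_m$ is finite as well. Consequently $\CF=\CF_U\oplus\CF_Z$, with $\CF_U$ a family on $U$ of length $m$ and $\CF_Z$ a length-$(n-m)$ family supported on $Z$; composing the (pulled-back) universal surjection onto $\CF$ with the projections $\CF\to\CF_U$ (then restricting to $U$) and $\CF\to\CF_Z$ produces the inverse morphism $S_m\to\Quot_U(\CE|_U,m)\times\Quot_X(\CE,n-m)_Z$. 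Combining, $[\Quot_X(\CE,n)]=\sum_m[S_m]=\sum_m[\Quot_U(\CE|_U,m)]\,[\Quot_X(\CE,n-m)_Z]$.

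The step I expect to need the most care is the family-level content just sketched: that $i_{T\ast}\CG$ is $T$-flat with proper support (this is why one appeals to \cite[Prop.~2.9]{Fantechi-Ricolfi-structural}), and, above all, that over the stratum $S_m$ the support scheme of the universal quotient really does break up into two parts, each finite over $S_m$. This last point — that no length migrates from $U$ to $Z$ precisely because the degree on $U$ is locally constant on $S_m$ — is exactly what makes the splitting $\CF=\CF_U\oplus\CF_Z$ valid in families; everything else is a routine unwinding of definitions, entirely parallel to \Cref{prop:coh-dec}.
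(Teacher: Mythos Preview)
Your argument is correct, and the underlying idea---split the universal quotient according to where its support lies---is the same as the paper's. The difference is one of packaging. The paper's proof is three lines: it simply pulls back the geometric bijection \eqref{strat-coh} already established in \Cref{prop:coh-dec} along $\rho_{\CE,n}\colon\Quot_X(\CE,n)\to\CCoh^n(X)$, noting that $\rho_{\CE,k}^{-1}\CCoh^k(X)_U=\Quot_U(\CE|_U,k)$ and $\rho_{\CE,k}^{-1}\CCoh^k(X)_Z=\Quot_X(\CE,k)_Z$. You instead redo the stratification directly on the Quot scheme via the Quot-to-Chow morphism. Your route is more self-contained but duplicates work already done at the stack level; the paper's route is shorter precisely because the splitting argument has been carried out once and for all in \Cref{prop:coh-dec}.

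One remark on the step you flag as delicate. You are right that making the decomposition $\CF=\CF_U\oplus\CF_Z$ work over a reduced base $S_m$ (so that the backward map is a genuine morphism of schemes) requires showing $W_U\to S_m$ is proper; this can be done, e.g.\ by observing that $\CF|_{(Z_n)\times S_m}$ has constant fibre length $n-m$ over the reduced base, hence is flat by \Cref{flatness-criteria}\,\ref{flat-3}, so the kernel of $\CF\onto\CF|_{(Z_n)\times S_m}$ is flat with support finite over $S_m$ and contained in $U\times S_m$. But in fact you do not need this for the motivic identity: your forward morphism $\Quot_U(\CE|_U,m)\times\Quot_X(\CE,n-m)_Z\to S_m$ is already a bijection on $\bfk$-points (the inverse on $\bfk$-points is the obvious pointwise splitting), and over an algebraically closed field of characteristic~$0$ this alone forces equality of classes in $K_0(\Var_\bfk)$. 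This is exactly the ``geometric bijection'' mechanism the paper invokes, and it lets you sidestep the family-level properness argument entirely.
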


\begin{proof}
Note that 
\begin{align*}
\Quot_U(\CE|_U,k) &= \rho_{\CE,k}^{-1}\CCoh^k(X)_U \\
\Quot_X(\CE,k)_Z &= \rho_{\CE,k}^{-1}\CCoh^k(X)_Z
\end{align*}
where $\rho_{\CE,k}$ is the map \eqref{rho-morphism}.
It is therefore enough to pullback the geometric bijection \eqref{strat-coh} of \Cref{prop:coh-dec} along $\rho_{\CE,n}\colon \Quot_X(\CE,n) \to \CCoh^n(X)$.
\end{proof}

The proof of \Cref{thm:motivic-dec-intro} is therefore complete.

\section{The punctual stack of sheaves}\label{sec:punctual-stacks}
In this section we explore in greater detail the \emph{punctual stack} $\CCoh^n(X)_p$ attached to a closed point $p$ on a $\bfk$-variety $X$.

\subsection{Dependence on formal neighbourhood: the smooth case}\label{sec:formal-smooth}

We start recalling the following key lemma.

\begin{lemma}[{\cite[Lemma 5.4]{Fantechi-Ricolfi-structural}}]\label{lemma:etale-comparison}
Let $\pi\colon X' \to X$ be an \'etale map of $\bfk$-varieties. Then the direct image of coherent sheaves induces an \'etale morphism
\[
\begin{tikzcd}[row sep=tiny]
V_{\inj} \arrow{r}{\pi_\ast} & \CCoh^n(X)\\
{[}\CE{]} \arrow[mapsto]{r} & {[}\pi_\ast \CE{]}
\end{tikzcd}
\]
where $V_{\inj}\subset \CCoh^n(X')$ is the open substack of sheaves $\CE$ on $X'$ such that $\pi$ is injective on $\Supp_{\mathrm{set}}(\CE)\subset X'$.
\end{lemma}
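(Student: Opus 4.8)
The plan is to first make sense of the morphism $\pi_*$, and then to verify it is \'etale via the infinitesimal lifting criterion. The crux is that an \'etale map induces isomorphisms on $n$-th infinitesimal neighbourhoods of the (finitely many) support points, so that over an artinian base $\pi_*$ is, locally around the support, nothing but restriction of scalars along an isomorphism of truncated local rings.

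First I would deal with the set-up. The substack $V_{\inj}\subseteq\CCoh^n(X')$ is open: since $\pi$ is \'etale and our schemes are separated, the relative diagonal $X'\to X'\times_XX'$ is an open and closed immersion, so for a $T$-family $\CE$ the scheme $\Supp(\CE)\times_{X_T}\Supp(\CE)$, which is finite over $T$, splits off a clopen ``off-diagonal'' piece $D$, and the non-injectivity locus in $T$ is exactly the image of $D$, a closed subset since $D\to T$ is finite. Next, $\pi_*$ is a well-defined $1$-morphism $V_{\inj}\to\CCoh^n(X)$: for a $T$-family $\CE$ in $V_{\inj}$, the restriction $\Supp(\CE)\to X_T$ of $\pi_T$ is proper (as $\Supp(\CE)\to T$ is finite) and quasi-finite (as $\pi_T$ is \'etale), hence finite, so $\pi_{T*}\CE$ is coherent; since $\pi_T$ is flat, $\pi_{T*}\CE$ is $T$-flat with finite support over $T$, its formation commutes with base change on $T$, and fibrewise $\chi\bigl((\pi_{T*}\CE)_{\bar t}\bigr)=\chi(\CE_{\bar t})=n$. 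Moreover $\pi_*$ is representable, being an isomorphism on automorphism groups: for $\CE\in V_{\inj}$ the sheaf $\pi_*\CE$ is the direct sum of its pieces near the distinct images $x_i=\pi(x'_i)$ of the support points of $\CE$, on each of which $\pi_*$ is restriction of scalars along a local isomorphism, so $\Aut(\pi_*\CE)\cong\prod_i\Aut(\pi_*\CE_i)\cong\prod_i\Aut(\CE_i)\cong\Aut(\CE)$. It is also locally of finite type, both stacks being locally of finite type over $\bfk$; so it remains to prove $\pi_*$ is formally \'etale.

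By a standard reduction it is enough to solve, uniquely up to unique isomorphism, the lifting problem for a square-zero extension $\Spec R_0\hookrightarrow\Spec R$ with $R$ artinian local and residue field $\bfk$: given $\CE_0\in V_{\inj}(\Spec R_0)$, an object $\CG\in\CCoh^n(X)(\Spec R)$, and an isomorphism $\pi_*\CE_0\simto\CG|_{\Spec R_0}$, produce $\CE\in V_{\inj}(\Spec R)$ lifting $\CE_0$ with $\pi_*\CE\simto\CG$ compatibly. Here $\CE_0$ is supported at finitely many closed points $x'_1,\dots,x'_k$ of $X'$, with distinct images $x_1,\dots,x_k$; choose pairwise disjoint affine opens $U_i\ni x_i$ in $X$ and affine opens $U'_i\ni x'_i$ inside $\pi^{-1}(U_i)$. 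By \Cref{rmk:ev-map-and-annihilation} every sheaf involved is killed by $\mathfrak m^n$, so $\CE_0=\bigoplus_i\CE_{0,i}$ and $\CG=\bigoplus_i\CG_i$, where $\CE_{0,i}$, resp.\ $\CG_i$, is a finite module over $\OO_{X',x'_i}/\mathfrak m^n\otimes_\bfk R_0$, resp.\ $\OO_{X,x_i}/\mathfrak m^n\otimes_\bfk R$. Since $\pi$ is \'etale and $\kappa(x_i)=\kappa(x'_i)=\bfk$, it induces ring isomorphisms $\theta_i\colon\OO_{X,x_i}/\mathfrak m^n\simto\OO_{X',x'_i}/\mathfrak m^n$, and through $\theta_i\otimes\mathrm{id}$ the functor $\pi_*$ on sheaves supported at $x'_i$ is precisely restriction of scalars, which does not alter the underlying $R$-module. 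The lift is therefore forced: let $\CE_i$ be $\CG_i$ with its module structure transported along $\theta_i^{-1}\otimes\mathrm{id}$, and set $\CE=\bigoplus_i\CE_i$, extended by zero. One checks directly that $\CE|_{\Spec R_0}\simto\CE_0$ and $\pi_*\CE\simto\CG$ compatibly, that $\CE$ is $R$-flat (flatness over $R$ depends only on the unchanged $R$-module structure), and that $\CE$ has length $n$ and support $\{x'_1,\dots,x'_k\}$ over $\Spec R$, so $\CE\in V_{\inj}(\Spec R)$. Uniqueness is the same computation read backwards: any lift decomposes along the $x'_i$, the isomorphism with $\CG$ forces each piece to be $\CG_i$ transported along $\theta_i^{-1}$, and the comparison isomorphism is itself unique because $\pi_*$ is a bijection on $\Hom$-groups between sheaves supported at the $x'_i$.

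The main obstacle is this last step: organising the ``decompose along the distinct support points, then $\pi_*$ becomes restriction of scalars along an \'etale isomorphism of truncated local rings'' picture so that it delivers existence, uniqueness, and membership in $V_{\inj}$ simultaneously, and justifying the reduction to the artinian-local square-zero case. The openness of $V_{\inj}$ and the base-change compatibility of $\pi_*$, by contrast, are routine, both resting on $\pi$ being flat with clopen relative diagonal.
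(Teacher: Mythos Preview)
The paper does not prove this lemma; it is recalled from \cite[Lemma~5.4]{Fantechi-Ricolfi-structural} without argument. Your proof is self-contained and the strategy is correct: openness of $V_{\inj}$ via the clopen relative diagonal of the \'etale map, well-definedness and base-change compatibility of $\pi_\ast$, and then the infinitesimal lifting criterion, reduced to artinian local test rings with residue field $\bfk$, where $\pi_\ast$ becomes restriction of scalars along the isomorphism $\OO_{X,x_i}/\mathfrak m^n\simto\OO_{X',x'_i}/\mathfrak m^n$ induced by \'etaleness.

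Two phrasings deserve tightening. The $T$-flatness of $\pi_{T\ast}\CE$ is not ``since $\pi_T$ is flat'' but because $\Supp(\CE)\to X_T$ is affine (indeed finite), so pushforward does not alter the underlying $\OO_T$-module, and flatness over $T$ is a condition on that module alone. More importantly, your representability claim (``being an isomorphism on automorphism groups'') is argued only at $\bfk$-points, whereas one needs at least faithfulness over every base $T$. The cleanest fix is to note that on $V_{\inj}$ the counit $\pi_T^\ast\pi_{T\ast}\CE\to\CE$ is an isomorphism: writing $X'_T\times_{X_T}X'_T$ as the disjoint union of the diagonal and its clopen complement, the off-diagonal piece contributes nothing precisely by the injectivity hypothesis on $\Supp(\CE)$. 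Adjunction then gives $\End_{X'_T}(\CE)\simto\End_{X_T}(\pi_{T\ast}\CE)$ for every $T$, which yields representability and, incidentally, the isomorphism on stabilisers used later in the paper's proof of \Cref{thm:punctual-smooth-point}. With these adjustments your argument goes through.
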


\begin{theorem}
\label{thm:punctual-smooth-point}
Let $X$ be a $\bfk$-variety, $p \in X$ a smooth closed point with $\dim T_pX=d$, and $n \in \BZ_{\geqslant 0}$ an integer. Then there is an isomorphism of algebraic stacks
\[
\begin{tikzcd}
    \CCoh^n(X)_p \arrow{r}{\sim} & \CCoh^n(\BA^d)_0.
\end{tikzcd}
\]
\end{theorem}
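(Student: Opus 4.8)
The plan is to reduce the statement to a purely local assertion about completed local rings, using \'etale localisation and the fact that the punctual stack $\CCoh^n(X)_p$ depends only on the $n$-th infinitesimal neighbourhood of $p$. First, I would observe that by \Cref{rmk:supp-Z} and the construction of $\CCoh^n(X,Z)$ in \Cref{sec:support-on-subset}, the punctual stack $\CCoh^n(X)_p$ has the same reduced structure as $\CCoh^n(X,p) = \CCoh^n(\Spec \OO_{X,p}/\mathfrak m_p^n)$; and in fact, by \Cref{rmk:ev-map-and-annihilation}, a length $n$ sheaf supported at $p$ is genuinely annihilated by $\mathfrak m_p^n$, so one expects the scheme structures to agree and $\CCoh^n(X)_p \simto \CCoh^n(\Spec \OO_{X,p}/\mathfrak m_p^n)$ as stacks (this identification also only uses $\OO_{X,p}/\mathfrak m_p^n$, hence only the completed local ring $\widehat{\OO}_{X,p}$). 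Since $p$ is a smooth point with $\dim T_pX = d$, we have $\widehat{\OO}_{X,p} \cong \bfk\llbracket x_1,\ldots,x_d\rrbracket$, and correspondingly $\OO_{X,p}/\mathfrak m_p^n \cong \bfk[x_1,\ldots,x_d]/\mathfrak m_0^n = \OO_{\BA^d,0}/\mathfrak m_0^n$. The same analysis applied to $\BA^d$ at the origin gives $\CCoh^n(\BA^d)_0 \simto \CCoh^n(\Spec \OO_{\BA^d,0}/\mathfrak m_0^n)$, and composing the two identifications produces the desired isomorphism.

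The cleanest way to make ``$\CCoh^n(X)_p$ depends only on the $n$-th neighbourhood'' precise is probably via \Cref{lemma:etale-comparison}: choose, by smoothness of $X$ at $p$, an \'etale map $\pi \colon X' \to \BA^d$ from an open neighbourhood $X' \subset X$ of $p$, sending $p$ to the origin $0 \in \BA^d$. Then $\pi$ induces an isomorphism of $n$-th infinitesimal neighbourhoods $\Spec \OO_{X',p}/\mathfrak m_p^n \simto \Spec \OO_{\BA^d,0}/\mathfrak m_0^n$. A sheaf supported only at $p$, being annihilated by $\mathfrak m_p^n$, is the pushforward of a sheaf on this infinitesimal neighbourhood, on which $\pi$ restricts to an isomorphism and in particular is injective on support; so $\pi_\ast$ of \Cref{lemma:etale-comparison} carries $\CCoh^n(X')_p$ isomorphically (not merely \'etale-ly) onto $\CCoh^n(\BA^d)_0$, because on this closed substack the \'etale morphism $\pi_\ast$ becomes a monomorphism that is also surjective onto $\CCoh^n(\BA^d)_0$, hence an isomorphism. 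Finally $\CCoh^n(X)_p = \CCoh^n(X')_p$ since $X' \subset X$ is open and $p \in X'$, so passing from $X$ to the neighbourhood $X'$ changes nothing by \Cref{rmk:coh-open}.

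I expect the main obstacle to be the careful verification that $\pi_\ast$ restricts to an \emph{isomorphism} — rather than just an \'etale morphism — between $\CCoh^n(X')_p$ and $\CCoh^n(\BA^d)_0$. \Cref{lemma:etale-comparison} only gives \'etale; to upgrade to an isomorphism one must check that on the closed substack of sheaves supported (with the right multiplicity bound) at the single point $p$, the functor $\pi_\ast$ is fully faithful and essentially surjective at the level of families over an arbitrary base $T$, not just on geometric points. The key input is that such a family is supported on $Z_n \times T$ where $Z_n = \Spec \OO_{X',p}/\mathfrak m_p^n$, and $\pi|_{Z_n} \colon Z_n \simto (\BA^d)_n$ is an isomorphism, so $\pi_\ast$ restricted to sheaves on $Z_n \times T$ is an equivalence with inverse $(\pi|_{Z_n})^{-1}_\ast$; one then checks that this is compatible with the substack structures on both sides and with base change. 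Once this local comparison is in place, the rest is bookkeeping with the definitions of $\CCoh^n(X)_p$ and $\CCoh^n(X,p)$ already set up in the excerpt.
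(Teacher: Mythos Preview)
Your second and third paragraphs take essentially the same route as the paper: choose \'etale coordinates $\pi\colon A \to \BA^d$ with $\pi^{-1}(0)=\{p\}$, invoke \Cref{lemma:etale-comparison} to get an \'etale morphism $\pi_\ast\colon V_{\inj}\to\CCoh^n(\BA^d)$, and then restrict over the punctual locus. The difference is only in the endgame. The paper does \emph{not} argue directly with $T$-families; instead it observes that the restricted map $\pi_\ast\colon \CCoh^n(X)_p \to \CCoh^n(\BA^d)_0$ is \'etale (by base change), bijective on geometric points, and induces isomorphisms on automorphism groups (via the adjunction $\Hom_X(\CE,\CE)\cong\Hom_{\BA^d}(\pi_\ast\CE,\pi_\ast\CE)$), then cites \cite[\href{https://stacks.math.columbia.edu/tag/0DU6}{Tag 0DU6}]{stacks-project} to conclude. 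This sidesteps precisely the obstacle you flag.

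Your direct $T$-point argument is workable but needs one correction: the claim that a $T$-family in $\CCoh^n(X')_p$ is scheme-theoretically supported on $Z_n\times T$ is not obvious for non-reduced $T$ --- the pointwise bound $\mathfrak m_p^n\CF_t=0$ from \Cref{rmk:ev-map-and-annihilation} does not immediately propagate to the family. What you do get easily (from Cayley--Hamilton applied to each coordinate action, or simply from coherence plus set-theoretic support on $\{p\}\times T$) is support on $Z_N\times T$ for \emph{some} $N$ depending on $T$. Since $\pi$ is \'etale, it induces an isomorphism on \emph{every} infinitesimal neighbourhood of $p$, so this weaker bound still suffices for your argument; just replace $Z_n$ by $Z_N$ throughout. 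Your first paragraph's shortcut via $\CCoh^n(X,p)$ is more delicate: the paper only establishes $\CCoh^n(X,Z)_{\red}\simto(\CCoh^n(X)_Z)_{\red}$ (\Cref{rmk:supp-Z}), not an isomorphism of stacks, so that route would require additional work.
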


\begin{proof}
We can pick \'etale coordinates around $p$, i.e.~we can find a pair $(A,\pi)$ where $A$ is an open neighbourhood $p \in A \subset X$ and $\pi\colon A \to \BA^d$ is an \'etale map, sending $p$ to the origin $0 \in \BA^d$ \cite[\href{https://stacks.math.columbia.edu/tag/054L}{Tag 054L}]{stacks-project}. The neighbourhood $A$ can be shrunk to achieve $A_{\pi(p)} = \pi^{-1}\pi(p) = \set{p}$, see e.g.~\cite[App.~A]{BR18}. Therefore we may assume that $\pi$ is an immersion near $p$. By \Cref{lemma:etale-comparison}, we have an \'etale morphism $\pi_\ast$ appearing in the diagram
\[
\begin{tikzcd}[column sep=large,row sep=large]
V_{\inj} \arrow[hook]{r}{\textrm{open}}\arrow{d}{\pi_\ast} & \CCoh^n(A) \\
\CCoh^n(\BA^d) & 
\end{tikzcd}
\]
and its pullback along the closed immersion $\CCoh^n(\BA^d)_0 \into \CCoh^n(\BA^d)$ is, by base change, an \'etale map
\[
\begin{tikzcd}
\CCoh^n(X)_p \arrow{r}{\pi_\ast} & \CCoh^n(\BA^d)_0.
\end{tikzcd}
\]
Since it is \'etale (by pullback) and bijective on points (it is injective because $\pi$ is an immersion near $p$ and surjective because $A_{\pi(p)} = \set{p}$), to conclude it is an isomorphism of stacks it is enough by \cite[\href{https://stacks.math.columbia.edu/tag/0DU6}{Tag 0DU6}]{stacks-project} to check it induces an isomorphism on stabiliser groups. But because $\pi$ is an immersion near $p$ we have, for all $[\CE] \in \CCoh^n(X)_p$, canonical isomorphisms
\[
\begin{tikzcd}
\Hom_X(\CE,\CE) \arrow{r}{\sim} & \Hom_X(\pi^\ast \pi_\ast \CE,\CE) \arrow{r}{\sim} & \Hom_{\BA^d}(\pi_\ast \CE,\pi_\ast \CE),
\end{tikzcd}
\]
the first isomorphism being induced by $\pi^\ast \pi_\ast \CE \simto \CE$, the second one being the natural adjunction $(\pi^\ast,\pi_\ast)$. This induces the sought after isomorphism on stabiliser groups (cf.~\Cref{eqn:isom-map})
\[
\begin{tikzcd}
\Aut(\CE) \arrow{r}{\sim} & \Aut(\pi_\ast \CE),
\end{tikzcd}
\]
which concludes the proof.
\end{proof}

At the level of punctual Quot schemes, we obtain the following result. The proof is very similar to the previous one, we include it for the sake of completeness.

\begin{theorem}
\label{thm:quot-punctual}
Let $X$ be a $\bfk$-variety, $p \in X$ a smooth closed point with $\dim T_pX=d$, and $n \in \BZ_{\geqslant 0}$ an integer. Then, for every $r>0$, there is an isomorphism of schemes 
\[
\begin{tikzcd}
\Quot_{X}(\OO_X^{\oplus r},n)_p  \arrow{r}{\sim} & \Quot_{\BA^d}(\OO_{\BA^d}^{\oplus r},n)_0.
\end{tikzcd}
\] 
\end{theorem}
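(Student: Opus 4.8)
The plan is to mimic the proof of \Cref{thm:punctual-smooth-point} almost verbatim, replacing the stack $\CCoh^n(-)$ by the Quot scheme $\Quot_{(-)}(\OO^{\oplus r},n)$ and the $\et$ morphism $\pi_\ast$ furnished by \Cref{lemma:etale-comparison} by its Quot-scheme analogue. Concretely, first I would pick \'etale coordinates around $p$: an open neighbourhood $p \in A \subset X$ and an \'etale map $\pi\colon A \to \BA^d$ with $\pi(p)=0$, then shrink $A$ so that $A_{\pi(p)} = \pi^{-1}(0) = \set{p}$, so that $\pi$ is an immersion near $p$ (using \cite[\href{https://stacks.math.columbia.edu/tag/054L}{Tag 054L}]{stacks-project} and \cite[App.~A]{BR18}, exactly as before).

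Next I would produce, from $\pi$, a morphism on Quot schemes. Since $\pi$ is \'etale, $\pi^\ast\OO_{\BA^d}^{\oplus r} = \OO_A^{\oplus r}$ and push-forward along $\pi$ is exact, so given a $T$-flat quotient $\OO_{A_T}^{\oplus r} \onto \cF$ with $0$-dimensional fibres we get by adjunction a quotient $\OO_{\BA^d_T}^{\oplus r} \to (\pi_T)_\ast\cF$; on the open locus $V_{\inj}^{\Quot} \subset \Quot_A(\OO_A^{\oplus r},n)$ where $\pi$ is injective on the (set-theoretic) support of $\cF$ the map $(\pi_T)_\ast\OO_{A_T}^{\oplus r}\to (\pi_T)_\ast\cF$ is still surjective (surjectivity can be checked on the finitely many points of the support, where $\pi$ is a local isomorphism), so we obtain an \'etale morphism
\[
\begin{tikzcd}
V_{\inj}^{\Quot} \arrow{r}{\pi_\ast} & \Quot_{\BA^d}(\OO_{\BA^d}^{\oplus r},n),
\end{tikzcd}
\]
compatible with the Quot-to-Chow morphisms and with $\pi_\ast\colon V_{\inj}\to\CCoh^n(\BA^d)$ under $\rho_{\OO^{\oplus r},n}$. (Alternatively one deduces $\et$-ness of this map directly from \Cref{lemma:etale-comparison} by the fibre-product description $\Quot_A(\OO_A^{\oplus r},n)_? = \rho^{-1}_{\OO^{\oplus r},n}\CCoh^n(A)_?$, which identifies $V_{\inj}^{\Quot}$ with the pullback of $V_{\inj}$.) Pulling back along the closed immersion $\Quot_{\BA^d}(\OO_{\BA^d}^{\oplus r},n)_0 \into \Quot_{\BA^d}(\OO_{\BA^d}^{\oplus r},n)$ and using that $\Quot_X(\OO_X^{\oplus r},n)_p = \Quot_A(\OO_A^{\oplus r},n)_p$ sits inside $V_{\inj}^{\Quot}$ (as $\pi$ is an immersion near $p$), base change yields an \'etale morphism
\[
\begin{tikzcd}
\Quot_{X}(\OO_X^{\oplus r},n)_p \arrow{r}{\pi_\ast} & \Quot_{\BA^d}(\OO_{\BA^d}^{\oplus r},n)_0.
\end{tikzcd}
\]

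Finally, to upgrade this \'etale morphism to an isomorphism I would check it is bijective on points, and being an \'etale morphism of schemes that is universally bijective, it is an isomorphism. Injectivity: two quotients of $\OO_X^{\oplus r}$ supported at $p$ with the same image under $\pi_\ast$ have the same kernel, because $\pi^\ast\pi_\ast\cF \simto \cF$ for $\cF$ supported where $\pi$ is an immersion, so applying $\pi^\ast$ recovers the original quotient; hence the map is a monomorphism. Surjectivity: because $A_{\pi(p)} = \set{p}$, every length-$n$ quotient of $\OO_{\BA^d}^{\oplus r}$ supported at $0$ pulls back (locally, using that $\pi$ is an isomorphism onto an open of $\BA^d$ near $p$, equivalently using the formal/henselian local isomorphism $\OO_{X,p}/\mathfrak m_p^n \simto \OO_{\BA^d,0}/\mathfrak m_0^n$) to a quotient of $\OO_X^{\oplus r}$ supported at $p$ mapping to it. The main obstacle, and the only point requiring care, is verifying that push-forward along $\pi$ stays \emph{surjective} on the $V_{\inj}$-locus and that the resulting map on Quot schemes is genuinely \'etale and compatible with base change — but this is precisely what \Cref{lemma:etale-comparison} encodes on the level of $\CCoh^n$, and the Quot-scheme statement follows from it by the fibre-product identities above, so no new difficulty arises beyond bookkeeping. (In fact, since everything is a scheme here, the stabiliser-group check needed in \Cref{thm:punctual-smooth-point} is vacuous, making this proof strictly shorter.)
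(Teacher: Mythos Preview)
Your proposal is correct and follows essentially the same approach as the paper's proof: choose \'etale coordinates $(A,\pi)$ with $\pi^{-1}(0)=\{p\}$, build an \'etale map from an open $U_{\inj}\subset\Quot_A(\OO_A^{\oplus r},n)$ to $\Quot_{\BA^d}(\OO_{\BA^d}^{\oplus r},n)$ via push-forward along $\pi$, restrict to the punctual loci, and conclude by observing the resulting \'etale map is bijective. The only cosmetic difference is that the paper invokes \cite[Prop.~A.3]{BR18} directly for the \'etale map on Quot schemes, whereas you sketch the construction by hand and also note the alternative of pulling back \Cref{lemma:etale-comparison} along $\rho_{\OO^{\oplus r},n}$; both are fine.
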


\begin{proof}
Fix \'etale coordinates as in the proof of \Cref{thm:punctual-smooth-point}, i.e.~find a pair $(A,\pi)$ where $A$ is an open neighbourhood $p \in A \subset X$ and $\pi\colon A \to \BA^d$ is an \'etale map, sending $p$ to the origin $0 \in \BA^d$. Further assume, without loss of generality, that $A_{\pi(p)} = \set{p}$, so that $\pi$ is an immersion near $p$. The proof of \cite[Prop.~A.3]{BR18} shows the existence of an \'etale map 
\[
\begin{tikzcd}
U_{\inj} \arrow{r}{v} & \Quot_{\BA^d}(\OO_{\BA^d}^{\oplus r},n),
\end{tikzcd}
\]
where $U_{\inj}$ is the open subscheme of $\Quot_{A}(\OO_A^{\oplus r},n)$ parametrising isomorphism classes of quotients $\OO_A^{\oplus r} \onto \CF$ such that $\pi$ is injective around $\Supp_{\textrm{set}}(\CF)$. Explicitly, the map $v$ sends a quotient $\theta\colon \OO_A^{\oplus r} \onto \CF$ to 
\[
\begin{tikzcd}
\OO_{\BA^d}^{\oplus r} \arrow{r} & \pi_\ast \pi^\ast \OO_{\BA^d}^{\oplus r} = \pi_\ast \OO_A^{\oplus r} \arrow{r}{\pi_\ast\theta} & \pi_\ast \CF.
\end{tikzcd}
\]
Note that $U_{\inj}$ contains the punctual Quot scheme $\Quot_A(\OO_A^{\oplus r},n)_p = \Quot_X(\OO_X^{\oplus r},n)_p$, and restricts to an \'etale morphism $\Quot_X(\OO_X^{\oplus r},n)_p \to \Quot_{\BA^d}(\OO_{\BA^d}^{\oplus r},n)_0$. This morphism is injective because $\pi$ is an immersion near $p$ and surjective because $A_{\pi(p)} = \set{p}$, hence it is an isomorphism, as required.
\end{proof}

The proof of \Cref{main1:punctual-stack} is complete.

\begin{remark}
The isomorphism of Theorems \ref{thm:punctual-smooth-point} and \ref{thm:quot-punctual} are not canonical, for they depend on a choice of \'etale coordinates around $p$.    
\end{remark}

\section{Motivic generating functions}\label{sec:motivic-series}
Let $X$ be a $\bfk$-variety, and fix $\CE \in \Coh(X)$. Consider the generating series
\begin{equation}\label{eqn:motivic-series-quot-coh}
\begin{split}
\mathsf Z_X(t) &= \sum_{n\geqslant 0}\,[\CCoh^n(X)] t^n \,\in\, K_0(\St_{\bfk}^{\aff})\llbracket t\rrbracket\\
\mathsf Q_\CE(t) &= \sum_{n\geqslant 0}\,[\Quot_X(\CE,n)] t^n\,\in\, K_0(\Var_{\bfk})\llbracket t\rrbracket
\end{split}
\end{equation}
first introduced in \eqref{eqn:motivic-series}.

We now exploit the language of power structures (cf.~\Cref{subsec:power-str}) to give a decomposition formula for the series in \eqref{eqn:motivic-series-quot-coh}, under some additional assumptions (cf.~\Cref{thm:gen-fct-coh} and \Cref{thm:Quot-Series}).

\subsection{\texorpdfstring{Decomposition formulas for $\CCoh$ and $\Quot$}{}}
We now give a formula for the generating series \eqref{eqn:motivic-series-quot-coh} in the case where $X$ has $0$-dimensional singular locus. We stress that to make the formula more explicit, one would need to control the motive of the punctual stacks and of the punctual Quot schemes at closed singular points. See the work of Huang--Jiang \cite{Huang-Jiang} for recent progress in this direction.

\begin{definition}\label{def:sing-type}
Two pointed schemes $(Y,y)$ and $(Y',y')$ are \emph{\'etale-equivalent} if there is a ring isomorphism
\[
\begin{tikzcd}
\widehat{\OO}_{Y,y} \arrow{r}{\sim} & \widehat{\OO}_{Y',y'}.
\end{tikzcd}
\]
A \emph{singularity type} is an equivalence class $\sigma = [Y,y]$ of pointed schemes under \'etale-equivalence.
\end{definition}


We start with the stack of 0-dimensional sheaves. Let $\sigma$ be a singularity type. Define the generating function
\[
\mathsf Z_{\sigma}(t) = \sum_{n\geqslant 0} \,[\CCoh^n(Y)_y] t^n \in K_0(\St_{\bfk}^{\aff})\llbracket t \rrbracket,
\]
where $(Y,y)$ is any pointed $\bfk$-variety representing $\sigma$. We set
\[
\sm_d = [\BA^d,0],
\]
the singularity type corresponding to smooth $d$-dimensional $\bfk$-varieties. This yields
\[
\mathsf Z_{\sm_d}(t) = \sum_{n\geqslant 0} \,[\CCoh^n(\BA^d)_0] t^n.
\]
The fact that $\mathsf Z_{\sigma}(t)$ does not depend on the choice of representatives for $\sigma$ is a consequence of \Cref{rmk:etale-independence} and the fact that the stack $\CCoh^n(Y,y)$ only depends on the completion $\widehat{\OO}_{Y,y}$ (see also \Cref{thm:punctual-smooth-point} for the smooth case).

\begin{theorem}
\label{thm:gen-fct-coh}
Let $X$ be a $\bfk$-variety of dimension $d$, with smooth locus $X_{\sm} \subset X$. Let $\sigma_1,\ldots,\sigma_e$ be distinct singularity types, and assume $X$ has $k_i$ points of singularity type $\sigma_i$ for $i=1,\ldots,e$, and no other singularities. Then there are product decompositions
\[
\mathsf Z_X(t) = \mathsf Z_{X_{\sm}}(t) \prod_{1 \leqslant i \leqslant e} \mathsf Z_{\sigma_i}(t)^{k_i} = \mathsf Z_{\sm_d}(t)^{[X_{\sm}]}\prod_{1 \leqslant i \leqslant e} \mathsf Z_{\sigma_i}(t)^{k_i}
\]
in $K_0(\St_{\bfk}^{\aff})\llbracket t \rrbracket$.
\end{theorem}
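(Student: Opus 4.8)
The plan is to establish the two displayed factorisations in turn. For the first, $\mathsf Z_X(t)=\mathsf Z_{X_{\sm}}(t)\prod_i\mathsf Z_{\sigma_i}(t)^{k_i}$, I would peel off the singular locus using the scissor decomposition of \Cref{prop:coh-dec}. For the second, I would reduce to proving the ``smooth master formula'' $\mathsf Z_W(t)=\mathsf Z_{\sm_d}(t)^{[W]}$ for an arbitrary smooth $\bfk$-variety $W$ of dimension $d$, and then apply it to $W=X_{\sm}$; this relies on the partition stratification of \Cref{prop:zar-triviality}, the punctual comparison \Cref{thm:punctual-smooth-point}, and the geometric interpretation of the power structure on $K_0(\St^{\aff}_{\bfk})$ recalled in \Cref{rmk:ps-extends}.

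\emph{Step 1.} Let $p_1,\dots,p_N$ (with $N=\sum_ik_i$) be the singular points of $X$ and let $Z\into X$ be the reduced closed subscheme they support, so that $U=X\setminus Z=X_{\sm}$. Passing to generating series in \Cref{prop:coh-dec} gives
\[
\mathsf Z_X(t)=\mathsf Z_{X_{\sm}}(t)\cdot\Bigl(\sum_{k\geqslant 0}[\CCoh^k(X)_Z]\,t^k\Bigr).
\]
To identify the second factor I would use \Cref{eqn:mot-Z} to write $[\CCoh^k(X)_Z]=[\CCoh^k(X,Z)]=[\CCoh^k(Z_k)]$; since $Z$ is finite and reduced, $Z_k$ is the disjoint union $\coprod_j\Spec(\OO_{X,p_j}/\mathfrak m_{p_j}^k)$, whence $\CCoh^k(Z_k)\cong\coprod_{k_1+\dots+k_N=k}\prod_j\CCoh^{k_j}(\Spec(\OO_{X,p_j}/\mathfrak m_{p_j}^k))$, and each factor equals $\CCoh^{k_j}(X,p_j)$ because $\CCoh^{k_j}$ stabilises on the $k_j$-th infinitesimal neighbourhood (see \Cref{sec:support-on-subset}, and \Cref{rmk:ev-map-and-annihilation}). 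Converting this convolution into a product of generating series yields $\sum_k[\CCoh^k(X)_Z]t^k=\prod_{j=1}^N\mathsf Z_{\sigma(p_j)}(t)$, where $\mathsf Z_{\sigma(p_j)}(t)=\sum_i[\CCoh^i(X,p_j)]t^i$ depends only on the singularity type of $p_j$ by \Cref{rmk:etale-independence} and the canonical isomorphism $\OO_{X,p}/\mathfrak m_p^n\simeq\widehat{\OO}_{X,p}/\widehat{\mathfrak m}_p^n$. Grouping the $N$ points by type turns this into $\prod_{i=1}^e\mathsf Z_{\sigma_i}(t)^{k_i}$, which is the first equality.

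\emph{Step 2.} For the second equality it suffices to show $\mathsf Z_W(t)=\mathsf Z_{\sm_d}(t)^{[W]}$ for every smooth $\bfk$-variety $W$ of dimension $d$. I would compare coefficients of $t^n$. The partition stratification of $\CCoh^n(W)$ (induced by \Cref{eqn:sm-strata-alpha}) gives $[\CCoh^n(W)]=\sum_{\alpha\vdash n}[\CCoh^n_\alpha(W)]$, while \Cref{rmk:ps-extends} together with \Cref{eqn:power_formula} identifies the coefficient of $t^n$ in $\mathsf Z_{\sm_d}(t)^{[W]}$ with $\sum_{\alpha\vdash n}\pi_{\mathfrak S_\alpha}\bigl([\prod_iW^{\alpha_i}\setminus\Delta]\cdot\prod_i[\CCoh^i(\BA^d)_0]^{\otimes\alpha_i}\bigr)$. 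Thus it remains to match the $\alpha$-indexed summands: I would pull the support map $\supp^n_W\colon\CCoh^n_\alpha(W)\to\Sym^n_\alpha(W)$ back along the \'etale $\mathfrak S_\alpha$-torsor $\prod_iW^{\alpha_i}\setminus\Delta\to\Sym^n_\alpha(W)$. Over the ordered configuration space the fibration splits slot by slot, and by part (2) of \Cref{prop:zar-triviality} together with \Cref{thm:punctual-smooth-point} (which rewrites the punctual fibre at each point of $W$ uniformly as $\CCoh^i(\BA^d)_0$), the pulled-back family is a Zariski-locally-trivial fibration with fibre $\prod_i\CCoh^i(\BA^d)_0^{\alpha_i}$. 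Refining a trivialising cover to a locally closed stratification and using additivity of the motivic class, its class is $[\prod_iW^{\alpha_i}\setminus\Delta]\cdot\prod_i[\CCoh^i(\BA^d)_0]^{\alpha_i}$; taking the $\mathfrak S_\alpha$-orbit space then recovers $[\CCoh^n_\alpha(W)]$ on one side and the $\pi_{\mathfrak S_\alpha}$-term on the other. Summing over $\alpha$ and then over $n$ gives $\mathsf Z_W(t)=\mathsf Z_{\sm_d}(t)^{[W]}$, and taking $W=X_{\sm}$ completes the proof.

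The step I expect to be the main obstacle is the equivariant bookkeeping in Step 2: one must verify that the $\mathfrak S_\alpha=\prod_i\mathfrak S_{\alpha_i}$-action on the pulled-back fibration over $\prod_iW^{\alpha_i}\setminus\Delta$, together with the action on the fibre $\prod_i\CCoh^i(\BA^d)_0^{\alpha_i}$, is the standard ``permute the factors'' action underlying the power map \Cref{powermap} and hence \Cref{eqn:power_formula}; only then does $\pi_{\mathfrak S_\alpha}$ of the product return $[\CCoh^n_\alpha(W)]$ rather than a twisted form. This is precisely why one cannot argue directly over $\Sym^n_\alpha(W)$: the fibration $\CCoh^n_\alpha(W)\to\Sym^n_\alpha(W)$ is only \'etale-locally trivial for nontrivial $\alpha$, as flagged after \Cref{prop:zar-triviality}, and the extension of the geometric interpretation of the power structure to stacks, \cite{BM15}, is what legitimises the passage from the \'etale cover back down to $\Sym^n_\alpha(W)$. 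By comparison, the remaining ingredients (the explicit description of $Z_k$ in Step 1, the stabilisation of $\CCoh^{k_j}$, and the grouping of points by singularity type) are routine.
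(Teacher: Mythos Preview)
Your proposal is correct and follows essentially the same route as the paper: the first equality via the stratification of \Cref{prop:coh-dec} (the paper invokes \eqref{strat-coh} directly), and the second via the partition stratification and the geometric interpretation of the power structure, which the paper only cites as ``along the same lines of \cite{GLMHilb}'' together with \Cref{rmk:ps-extends}. Your Step~2 in fact spells out what the paper leaves implicit, and your identification of the equivariant bookkeeping over $\prod_i W^{\alpha_i}\setminus\Delta$ as the crux is exactly right.
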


\begin{proof}
By a slight abuse of notation, write $\CCoh^{a}(X,\sigma)$ for the stack $\CCoh^a(X,p)_{\red}$ for any $p$ such that $[X,p]=\sigma$.
The first identity is then a consequence of the stratification
\[
\CCoh^n(X) = \coprod_{0\leqslant m\leqslant n} \CCoh^m(X_{\sm}) \times \coprod_{\substack{a_1k_1+\cdots+a_ek_e = n-m \\ a_i \geqslant 0}}\prod_{1\leqslant i\leqslant e}\CCoh^{a_i}(X,\sigma_i)^{k_i},
\]
a special case of the stratification \eqref{strat-coh}. The second identity boils down to proving
\[
\mathsf Z_{X_{\sm}}(t) = \mathsf Z_{\sm_d}(t)^{[X_{\sm}]},
\]
which can be confirmed along the same lines of \cite[Thm.~1]{GLMHilb}, expoiting the geometric interpretation of the power structure on $K_0(\St_{\bfk}^{\aff})$, cf.~\Cref{rmk:ps-extends}.
\end{proof}

As for Quot schemes, we have the following situation. Consider a locally free sheaf $\CE$ of rank $r>0$ on a $\bfk$-variety $X$. If $p \in X$ is a closed point, we have an isomorphism
\begin{equation}\label{punctual-iso-quot}
\Quot_X(\CE,n)_p \cong \Quot_X(\OO_X^{\oplus r},n)_p
\end{equation}
and if $X$ is smooth and $d$-dimensional at $p$ we further have
\begin{equation}\label{punctual-quot-A^d}
\Quot_X(\OO_X^{\oplus r},n)_p \cong \Quot_{\BA^d}(\OO_{\BA^d}^{\oplus r},n)_0
\end{equation}
by \Cref{thm:quot-punctual}.
By \Cref{rmk:etale-independence} combined with \eqref{punctual-iso-quot}, the punctual generating function
\[
\sum_{n \geqslant 0}\,[\Quot_X(\CE,n)_p ] t^n = \sum_{n \geqslant 0}\,[\Quot_X(\OO_X^{\oplus r},n)_p ] t^n \in K_0(\Var_{\bfk})\llbracket t \rrbracket
\]
associated to $(X,\CE,p)$ does not depend on $(X,p)$ nor on $\CE$, but only on $r=\rk \CE$ and the singularity type $\sigma = [X,p]$. We may therefore set
\[
\mathsf Q_{r,\sigma}(t) = \sum_{n \geqslant 0}\,[\Quot_X(\OO_X^{\oplus r},n)_p] t^n 
\]
and keep in mind that this series records the motivic classes of all Quot schemes attached to triples $(X,\CE,p)$ where $\CE$ is locally free and $[X,p]=\sigma$. If $\sigma=\sm_d$, by \eqref{punctual-quot-A^d} we obtain (see also \Cref{thm:quot-punctual})
\[
\mathsf Q_{r,\sm_d}(t) = \sum_{n \geqslant 0}\,[\Quot_{\BA^d}(\OO_{\BA^d}^{\oplus r},n)_0 ]t^n.
\]
Once more, we stress that this equals the series
\[
\sum_{n \geqslant 0}\,[\Quot_{X}(\CE,n)_p]t^n
\]
for \emph{any} locally free sheaf $\CE$ (of rank $r$) on \emph{any} smooth $\bfk$-variety $X$ of dimension $d$, and for \emph{any} chosen closed point $p \in X$.

\begin{theorem}\label{thm:Quot-Series}
Let $X$ be a $\bfk$-variety of dimension $d$, with smooth locus $X_{\sm} \subset X$. Let $\sigma_1,\ldots,\sigma_e$ be distinct singularity types, and assume $X$ has $k_i$ points of singularity type $\sigma_i$ for $i=1,\ldots,e$, and no other singularities. Fix a locally free sheaf $\CE$ of rank $r>0$ on $X$. Then there are product decompositions
\[
\mathsf Q_\CE(t) = \mathsf Q_{\CE|_{X_{\sm}}}(t)  \prod_{1\leqslant i\leqslant e} \mathsf Q_{r,\sigma_i}(t)^{k_i} = \mathsf Q_{r,\sm_d}(t)^{[X_{\sm}]} \prod_{1\leqslant i\leqslant e} \mathsf Q_{r,\sigma_i}(t)^{k_i}
\]
in $K_0(\Var_{\bfk})\llbracket t \rrbracket$.
\end{theorem}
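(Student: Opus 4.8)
The plan is to run the proof of \Cref{thm:gen-fct-coh} with the Quot-scheme decomposition \Cref{cor:quot-dec} replacing the stack decomposition \Cref{prop:coh-dec}, and with a Quot-scheme analogue of \Cref{prop:zar-triviality} replacing the latter. For the \emph{first} product decomposition, let $Z \into X$ be the reduced closed subscheme supported on the finite set of singular points of $X$, so that $X_{\sm} = X \setminus Z$ and $Z = \coprod_{i=1}^{e}\coprod_{j=1}^{k_i}\set{p_{ij}}$ with $[X,p_{ij}] = \sigma_i$. Applying \Cref{cor:quot-dec} to this pair gives
\[
[\Quot_X(\CE,n)] = \sum_{0\leqslant m\leqslant n}[\Quot_{X_{\sm}}(\CE|_{X_{\sm}},m)]\cdot[\Quot_X(\CE,n-m)_Z].
\]
Since $Z$ is a disjoint union of reduced closed points, every sheaf with set-theoretic support in $Z$ decomposes canonically as the direct sum of its components at the individual $p_{ij}$, and hence so does every quotient of $\CE$ supported on $Z$, exactly as in the proof of \Cref{prop:coh-dec}; this yields, in $K_0(\Var_\bfk)$,
\[
[\Quot_X(\CE,k)_Z] = \sum_{\sum k_{ij} = k}\;\prod_{i,j}[\Quot_X(\CE,k_{ij})_{p_{ij}}].
\]
By \eqref{punctual-iso-quot} each factor equals $[\Quot_X(\OO_X^{\oplus r},k_{ij})_{p_{ij}}]$, and by \eqref{eqn:id-Z} together with the definition of $\Quot_X(\OO_X^{\oplus r},n,Z)$ this class depends only on $r$ and on $\sigma_i$; hence $\sum_{k\geqslant 0}[\Quot_X(\CE,k)_Z]\,t^k = \prod_{i=1}^{e}\mathsf Q_{r,\sigma_i}(t)^{k_i}$. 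Multiplying the two generating series then gives $\mathsf Q_\CE(t) = \mathsf Q_{\CE|_{X_{\sm}}}(t)\prod_{i}\mathsf Q_{r,\sigma_i}(t)^{k_i}$, which is the first equality.

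For the \emph{second} equality it remains to prove $\mathsf Q_{\CE|_{X_{\sm}}}(t) = \mathsf Q_{r,\sm_d}(t)^{[X_{\sm}]}$. I would deduce this from the geometric interpretation of the power structure on $K_0(\Var_\bfk)$ recalled in \Cref{subsec:power-str}: the coefficient of $t^n$ on the right-hand side is the class of the $\bfk$-variety
\[
Y_n = \coprod_{\alpha\vdash n}\left(\prod_i X_{\sm}^{\alpha_i}\setminus\Delta\right)\times\left(\prod_i\Quot_{\BA^d}(\OO_{\BA^d}^{\oplus r},i)_0^{\alpha_i}\right)\big/\FS_\alpha,
\]
so it suffices to produce a stratification (geometric bijection) of $\Quot_{X_{\sm}}(\CE|_{X_{\sm}},n)$ whose $\alpha$-stratum is, at the level of reduced schemes, the $\alpha$-summand of $Y_n$. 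Such a stratification is obtained by pulling back the partition stratification \eqref{eqn:sm-strata-alpha} of $\Sym^n(X_{\sm})$ along the Quot-to-Chow morphism $\mathsf{qc}_{\CE|_{X_{\sm}},n}$. To identify the strata I would prove the Quot-scheme refinement of part \ref{punctual-3} of \Cref{prop:zar-triviality}: over $\Sym^n_\alpha(X_{\sm})$ the morphism $\mathsf{qc}_{\CE|_{X_{\sm}},n}$ is \'etale-locally trivial with fibre $\prod_i\Quot_{\BA^d}(\OO_{\BA^d}^{\oplus r},i)_0^{\alpha_i}$. This is a Zariski-local statement on $X_{\sm}$: one covers $X_{\sm}$ by opens $W$ that are \'etale over $\BA^d$ \cite[\href{https://stacks.math.columbia.edu/tag/054L}{Tag 054L}]{stacks-project} and over which $\CE|_W \cong \OO_W^{\oplus r}$, and over such $W$ a length-$n$ quotient whose support is a configuration of shape $\alpha$ on distinct points decomposes, by the family version of the argument proving \Cref{thm:quot-punctual}, as a product of punctual Quot schemes modelled on $\BA^d$. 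Gluing these charts over the configuration locus of $\Sym^n_\alpha(X_{\sm})$ and passing to the $\FS_\alpha$-quotient --- legitimate because $\bfk$ has characteristic zero and finite group actions on quasiprojective varieties are good --- reconstructs the $\alpha$-summand of $Y_n$, by the same bookkeeping as in \cite[Thm.~1]{GLMHilb} and in the proof of \Cref{thm:gen-fct-coh}. Summing over $n$ and $\alpha$ gives the desired identity.

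The step I expect to be the main obstacle is precisely the Quot-scheme refinement of part \ref{punctual-3} of \Cref{prop:zar-triviality} in the presence of the twisting bundle $\CE$: one must verify that, although $\CE|_{X_{\sm}}$ is only \emph{Zariski-locally} trivial, the cluster decomposition of $\Quot_{X_{\sm}}(\CE|_{X_{\sm}},n)$ over $\Sym^n_\alpha(X_{\sm})$ is still governed by the trivial model $\Quot_{\BA^d}(\OO_{\BA^d}^{\oplus r},i)_0$, i.e.~that the transition data over overlapping charts merely permute and translate the local punctual factors without mixing them with the base directions of $X_{\sm}$, so that the $\alpha$-stratum genuinely is the $\FS_\alpha$-quotient above rather than something merely locally so. The statement proved in \cite{Fantechi-Ricolfi-structural} covers only $\CCoh$, so it has to be transported to Quot schemes here; granting this structural input, the remainder of the proof is a transcription of the arguments of \Cref{thm:gen-fct-coh} and of \cite{GLMHilb}.
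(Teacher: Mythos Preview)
Your argument for the first equality is correct and is exactly the paper's approach, just written out in more detail: the paper says only that it ``is a consequence of the stratification underlying the proof of \Cref{cor:quot-dec}'', and you have unpacked that stratification over the finite singular locus together with the identifications \eqref{punctual-iso-quot} and \eqref{eqn:id-Z}.

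For the second equality the paper takes a different route from yours: it does not reprove the identity $\mathsf Q_{\CE|_{X_{\sm}}}(t) = \mathsf Q_{r,\sm_d}(t)^{[X_{\sm}]}$ from scratch, but simply quotes it as ``the main result of \cite{ricolfi2019motive}'' (generalising the $r=1$ case of \cite{GLMHilb}). What you are sketching --- the Quot-scheme refinement of \Cref{prop:zar-triviality}\,\ref{punctual-3} and the subsequent bookkeeping with the power structure --- is precisely the content of that reference. Your outline is in the right spirit, and the ``main obstacle'' you flag (that the transition data between trivialising charts for $\CE$ do not mix punctual factors with base directions) is indeed the substantive point; it is handled in \cite{ricolfi2019motive}, so in the context of this paper there is no need to redo it. In short: your proposal is correct, but for the second identity the paper opts for a one-line citation rather than the direct stratification argument you sketch.
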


\begin{proof}
The first identity is a consequence of the stratification underlying the proof of \Cref{cor:quot-dec}. The second identity boils down to 
\[
\mathsf Q_{\CE|_{X_{\sm}}}(t) = \mathsf Q_{r,\sm_d}(t)^{[X_{\sm}]},
\]
a consequence of the main result of \cite{ricolfi2019motive}, which in turn generalises the $r=1$ case proved by Gusein-Zade, Luengo and Melle-Hern{\'a}ndez \cite{GLMHilb}.
\end{proof}

This concludes the proof of \Cref{main3}.

A version of \Cref{thm:Quot-Series}, where $X$ is a reduced curve and $\CE=\OO_X$, is proved in \cite[Cor.~2.2]{Bejleri_Ranganathan_Vakil_2020}.

\subsection{\texorpdfstring{Explicit formulas for $\CCoh$ and $\Quot$: state of the art}{}}\label{sec:state-of-art}

In low dimension, the series \eqref{eqn:motivic-series-quot-coh} can be computed explicitly. We next review what is known about them.

\begin{prop}
\label{cor:affine_line}
The stack $\CCoh^n(\Ab^1)$ is isomorphic to the quotient $[\End_\bfk(\bfk^n)/\GL_n]$, therefore its class in the Grothendieck group is 
\[ 
[\CCoh^n(\Ab^1)] = \frac{[\Ab^{n^2}]}{[\GL_n]}=
\frac{\Lb^{n^2}}{\displaystyle\prod_{0\leqslant i<n}(\BL^n-\BL^i)}.
\]
In particular, there is an identity\footnote{This can be seen as a refinement of Euler's formula
\[
\sum_{n\geqslant 0}\frac{t^n}{(1-q)\cdots (1-q^n)} = \prod_{k\geqslant 0}\left(1-q^kt\right)^{-1}.
\]
See \cite[Sec.~3.3]{BM15} for a full argument proving \eqref{eqn:refined-Euler}.}
\begin{equation}\label{eqn:refined-Euler}
\mathsf Z_{\BA^1}(t) = \prod_{k\geqslant 0} \left( 1-\BL^{-k}t\right)^{-1}.
\end{equation}
\end{prop}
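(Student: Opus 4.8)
The plan is to identify $\CCoh^n(\BA^1)$ with the moduli stack of pairs $(E,\phi)$, where $E$ is a rank $n$ vector bundle and $\phi$ an $\OO$-linear endomorphism of $E$, to recognise the latter as the quotient stack $[\End_\bfk(\bfk^n)/\GL_n]$ for the conjugation action, and then to read off the motivic class and deduce the generating function by a formal manipulation that ends in Euler's identity.

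The one step with genuine content is the moduli interpretation. Write $\BA^1 = \Spec\bfk[x]$ and let $\pi\colon \BA^1\times T\to T$ be the projection. An object $\cF\in\CCoh^n(\BA^1)(T)$ is, by definition, $T$-flat with $\Supp(\cF)\to T$ finite of fibrewise length $n$; since $\pi$ restricted to the (finite, hence affine) support is affine, the pushforward $E\defeq\pi_\ast\cF$ is coherent on $T$, the higher pushforwards vanish, and $T$-flatness of $\cF$ together with \Cref{flatness-criteria} forces $E$ to be locally free of rank $n$, with $E\otimes_{\OO_T}\kappa(t) = \HH^0(\cF_t)$ for every $t\in T$. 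Multiplication by $x$ turns $E$ into an $\OO_T[x]$-module, i.e.\ equips it with $\phi\in\End_{\OO_T}(E)$, and conversely $\cF$ is the sheaf on $\BA^1\times T$ attached to this module. I would check that $\cF\mapsto(E,\phi)$ is functorial, base-change compatible, and has an obvious quasi-inverse, hence is an equivalence of stacks $\CCoh^n(\BA^1)\simeq\CM_n$, where $\CM_n$ is the stack of such pairs; the frame bundle $\Iso(\OO_T^{\oplus n},E)$ is a $\GL_n$-torsor, a trivialisation of $E$ carries $\phi$ to a matrix in $\End_\bfk(\bfk^n)$, and two trivialisations differ by conjugation, so $\CM_n\simeq[\End_\bfk(\bfk^n)/\GL_n]$. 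The anticipated obstacle is exactly this families statement — that $\pi_\ast\cF$ stays locally free of the correct rank and that the reconstruction commutes with base change — rather than the $\bfk$-point statement, which is the classical dictionary between finite $\bfk[x]$-modules and endomorphisms up to conjugacy.

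Granting the isomorphism, the motivic class is formal: $\GL_n$ is a special group, so by the identity recalled in \Cref{sec:G-groups} one has $[\CCoh^n(\BA^1)] = [\End_\bfk(\bfk^n)]/[\GL_n] = \BL^{n^2}/\prod_{0\leqslant i<n}(\BL^n-\BL^i)$, using $\End_\bfk(\bfk^n)\cong\BA^{n^2}$ and the standard value of $[\GL_n]$ recalled in the excerpt.

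Finally, for the generating function I would use the factorisation $\prod_{i=0}^{n-1}(\BL^n-\BL^i) = \BL^{n^2}\prod_{j=1}^n(1-\BL^{-j})$ (pull $\BL^i$ out of the $i$-th factor, then $\BL^j$ out of each $\BL^j-1$, and note $\binom{n}{2}+\binom{n+1}{2}=n^2$), which collapses $[\CCoh^n(\BA^1)]$ to $1/\prod_{j=1}^n(1-\BL^{-j})$. Therefore $\mathsf Z_{\BA^1}(t) = \sum_{n\geqslant 0} t^n/\prod_{j=1}^n(1-\BL^{-j})$, which is precisely Euler's identity $\sum_{n\geqslant 0} z^n/\prod_{j=1}^n(1-q^j) = \prod_{k\geqslant 0}(1-q^k z)^{-1}$ specialised at $q=\BL^{-1}$ and $z=t$, yielding \eqref{eqn:refined-Euler}; for the care needed to justify this specialisation inside the completed Grothendieck ring I would follow \cite[Sec.~3.3]{BM15}.
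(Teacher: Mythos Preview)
Your proof is correct and follows essentially the same approach as the paper: the paper's proof is a one-liner citing \cite[Thm.~3.4]{Fantechi-Ricolfi-structural}, which identifies $\CCoh^n(\BA^d)$ with the quotient of the commuting variety of $d$-tuples of $n\times n$ matrices by $\GL_n$, and for $d=1$ this is exactly your identification $\CCoh^n(\BA^1)\simeq[\End_\bfk(\bfk^n)/\GL_n]$ via the pushforward-plus-multiplication-by-$x$ argument you spell out. Your explicit derivation of the generating function via the factorisation $\prod_{i=0}^{n-1}(\BL^n-\BL^i)=\BL^{n^2}\prod_{j=1}^n(1-\BL^{-j})$ and Euler's identity is more detailed than what the paper records (which defers to the footnote and \cite{BM15}), but the content is the same.
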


\begin{proof}
Everything follows from the first sentence, which is a special case of the description of $\CCoh^n(\BA^d)$ as a quotient of the commuting variety \cite[Thm.~3.4]{Fantechi-Ricolfi-structural}.
\end{proof}

In dimension $2$, we have the Feit--Fine formula.

\begin{theorem}[{Feit--Fine \cite{FF1,BM15}}]
There is an identity
\[
\mathsf Z_{\BA^2}(t) = \prod_{m\geqslant 1}\prod_{k\geqslant 1}\,\left(1-\BL^{2-m}t^k \right)^{-1}.
\]
\end{theorem}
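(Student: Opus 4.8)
The plan is to derive the formula from the one-variable case \Cref{cor:affine_line} together with the decomposition \Cref{thm:gen-fct-coh}, thereby recovering the motivic refinement of Feit--Fine. First I would reduce to the commuting variety: a length-$n$ sheaf on $\BA^2 = \Spec\bfk[x,y]$ is an $n$-dimensional $\bfk$-vector space equipped with two commuting endomorphisms (the actions of $x$ and $y$), so, exactly as in the proof of \Cref{cor:affine_line}, $\CCoh^n(\BA^2)\cong[C_n/\GL_n]$ with $C_n = \set{(A,B)\in\End_\bfk(\bfk^n)^2 : [A,B]=0}$ the commuting variety. Since $\GL_n$ is special this gives $[\CCoh^n(\BA^2)]=[C_n]/[\GL_n]$, so the assertion is the motivic Feit--Fine identity $\mathsf Z_{\BA^2}(t)=\sum_{n\geqslant 0}[C_n][\GL_n]^{-1}t^n$.

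Next I would pass to the punctual series. Applying \Cref{thm:gen-fct-coh} to the smooth variety $X=\BA^2$ yields $\mathsf Z_{\BA^2}(t)=\mathsf Z_{\sm_2}(t)^{[\BA^2]}=\mathsf Z_{\sm_2}(t)^{\BL^2}$, where $\mathsf Z_{\sm_2}(t)=\sum_n[\CCoh^n(\BA^2)_0]t^n$ and one checks that $\CCoh^n(\BA^2)_0\cong[\CN_n/\GL_n]$ with $\CN_n=\set{(A,B):[A,B]=0,\ A\text{ and }B\text{ nilpotent}}$ the variety of commuting pairs of nilpotent matrices. In parallel I would record the $\BA^1$-input: \Cref{cor:affine_line} together with \Cref{thm:gen-fct-coh} applied to $\BA^1$ gives $\mathsf Z_{\sm_1}(t)=\mathsf Z_{\BA^1}(t)^{\BL^{-1}}$, with $\mathsf Z_{\BA^1}(t)=\prod_{k\geqslant 0}(1-\BL^{-k}t)^{-1}$ and $\mathsf Z_{\sm_1}(t)=\sum_m[\CCoh^m(\BA^1)_0]t^m$.

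The heart of the matter is the factorisation $\mathsf Z_{\sm_2}(t)=\prod_{i\geqslant 1}\mathsf Z_{\sm_1}(t^i)$, which I would obtain by stratifying $[\CN_n/\GL_n]$ along the first projection $(A,B)\mapsto A$ according to the Jordan type $\lambda=(1^{m_1}2^{m_2}\cdots)\vdash n$ of $A$. The $\lambda$-stratum is $[\mathfrak{n}_\lambda/Z_\lambda]$, where $Z_\lambda\subset\GL_n$ is the stabiliser of a fixed nilpotent $A_\lambda$ of type $\lambda$ and $\mathfrak{n}_\lambda$ is the cone of nilpotent elements of its centraliser algebra $R_\lambda=\set{B:[A_\lambda,B]=0}$. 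By the structure theory of centralisers of nilpotent matrices, $R_\lambda$ modulo its (nilpotent) Jacobson radical is $\prod_i\Mat_{m_i}(\bfk)$; hence $Z_\lambda=R_\lambda^{\times}$ is an extension of $\prod_i\GL_{m_i}$ by a connected unipotent group, and $\mathfrak{n}_\lambda$ is the preimage of $\prod_i\CN(\mathfrak{gl}_{m_i})$ under the linear surjection $R_\lambda\onto\prod_i\Mat_{m_i}(\bfk)$, hence an affine-space bundle over it. Since $Z_\lambda$ is special (an extension of special groups) the affine-space factors cancel, leaving $[\mathfrak{n}_\lambda/Z_\lambda]=\prod_i[\CN(\mathfrak{gl}_{m_i})]/[\GL_{m_i}]=\prod_i[\CCoh^{m_i}(\BA^1)_0]$; summing over $n$ and $\lambda$ — equivalently, over all finitely supported sequences $(m_i)_{i\geqslant 1}$ — repackages the partition sum as $\prod_i\mathsf Z_{\sm_1}(t^i)$. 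This geometric-combinatorial step is the main obstacle; everything else is formal manipulation with the power structure, carried out in the dimensional completion of $K_0(\St^{\aff}_{\bfk})\llbracket t\rrbracket$ so that the series in $\BL^{-1}$ converge.

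Finally I would assemble the pieces. Using $\big(\prod_i A(t^i)\big)^m=\prod_i A(t^i)^m$ and $(A(t)^m)^{m'}=A(t)^{mm'}$,
\[
\mathsf Z_{\BA^2}(t)=\Big(\prod_{i\geqslant 1}\mathsf Z_{\sm_1}(t^i)\Big)^{\BL^2}=\prod_{i\geqslant 1}\mathsf Z_{\BA^1}(t^i)^{\BL},
\]
and for each $i$, since $[\Sym^j\BA^1]=\BL^j$ gives $(1-\BL^{-k}s)^{-[\BA^1]}=\zeta_{\BA^1}(\BL^{-k}s)=(1-\BL^{1-k}s)^{-1}$,
\[
\mathsf Z_{\BA^1}(t^i)^{\BL}=\prod_{k\geqslant 0}(1-\BL^{-k}t^i)^{-\BL}=\prod_{k\geqslant 0}(1-\BL^{1-k}t^i)^{-1}=\prod_{m\geqslant 1}(1-\BL^{2-m}t^i)^{-1}.
\]
Re-indexing $i$ as $k$ then gives $\mathsf Z_{\BA^2}(t)=\prod_{m\geqslant 1}\prod_{k\geqslant 1}(1-\BL^{2-m}t^k)^{-1}$, as desired.
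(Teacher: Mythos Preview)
The paper does not prove this theorem; it is stated in \Cref{sec:state-of-art} as a known result with references to Feit--Fine and Bryan--Morrison, so there is no in-paper argument to compare against. Your proposal is essentially a correct outline of the Bryan--Morrison proof: identify $\CCoh^n(\BA^2)_0$ with $[\CN_n/\GL_n]$, stratify by the Jordan type $\lambda$ of the first matrix, use the structure of the centraliser algebra $R_\lambda$ to reduce each stratum to $\prod_i[\CN(\mathfrak{gl}_{m_i})/\GL_{m_i}]=\prod_i[\CCoh^{m_i}(\BA^1)_0]$, and repackage via the power structure. The crucial cancellation of the affine-space contributions from $\mathrm{rad}(R_\lambda)$ in numerator and denominator is exactly right, and the identification $[\CN_{n,\lambda}/\GL_n]\cong[\mathfrak n_\lambda/Z_\lambda]$ via transitivity of the $\GL_n$-action on the Jordan orbit is the standard reduction.

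Two small points deserve a sentence more of care if you write this up. First, the substitution step $(1-\BL^{-k}t)^{-\BL}=(1-\BL^{1-k}t)^{-1}$ is not literally an instance of axiom~(7) of \Cref{def:power_structure}, which only allows $t\mapsto t^e$; you are really using that the power-structure formula \eqref{eqn:power_formula} is $K_0(\Var_\bfk)$-linear in the sense that rescaling each coefficient $A_n$ by $c^n$ rescales the output coefficients by $c^n$ as well (this follows from freeness of the $\FS_\alpha$-action on $\prod_i X^{\alpha_i}\setminus\Delta$, so the twisted affine-space factor descends to an honest vector bundle on the quotient). Second, as you note, both the statement and the intermediate identity \eqref{eqn:refined-Euler} only make sense after passing to a dimensional completion where $\sum_{k\geqslant 0}\BL^{-k}$ converges; this is harmless but should be said explicitly.
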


See \cite{MR18} for a module-theoretic, or geometric, interpretation of the first few coefficients of $\mathsf Z_{\BA^2}(t)$. 

\begin{remark}\label{rmk:DT_3}
No similar closed formula for $\mathsf Z_{\BA^d}(t)$ is currently available for any $d>2$. In fact, we do not even know whether the class of $\CCoh^n(\BA^d)$ is a polynomial in $\BL$, cf.~\Cref{open-problem-polynomial-in-L}.
\end{remark}

\smallbreak
As for Quot schemes, the situation is the following.
Fix integers $r,d>0$, a smooth $\bfk$-variety $X$ of dimension $d$ and a locally free sheaf $\CE$ of rank $r>0$ over it. Consider the generating series $\mathsf Q_\CE(t)$ from \eqref{eqn:motivic-series-quot-coh}.

We have the following known results.

\begin{theorem}[{\cite{BFP19,ricolfi2019motive}}]
If $C$ is a smooth curve, $\CE\in\Coh(C)$ is locally free of rank $r$, then $\Quot_C(\CE,n)$ is smooth and irreducible of dimension $nr$, and 
\[
\mathsf Q_\CE(t) = \Exp([C \times \BP^{r-1}]t) = \prod_{1\leqslant i \leqslant r} \zeta_C(\BL^{i-1}t).
\]
In particular, $\mathsf Q_\CE(t)$ is a rational function in $t$.
\end{theorem}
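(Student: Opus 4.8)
The plan is to reduce to the case $C = \BA^1$ and $\CE = \OO_C^{\oplus r}$, then invoke the power‑structure formalism already set up in the excerpt. First I would record that $\Quot_C(\CE,n)$ is smooth: since $C$ is a smooth curve, any $0$‑dimensional quotient $\CE \onto \CF$ has $\CF$ of finite length, and the obstruction space $\Ext^1_C(\ker, \CF)$ vanishes because $\ker$ is locally free (a torsion‑free sheaf on a smooth curve) and $\Ext^1$ on a smooth curve between a locally free sheaf and a length‑$n$ sheaf vanishes by the local‑to‑global spectral sequence combined with $\lExt^{i} = 0$ for $i \geqslant 2$ (dimension $1$) and $H^1$ of a $0$‑dimensional sheaf being zero. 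Hence $\Quot_C(\CE,n)$ is smooth of the expected dimension; counting parameters (choosing the length‑$n$ support, $n$ points, and at each point a quotient of $\OO^{\oplus r}$ modulo automorphisms, contributing $r-1$ in the generic configuration) gives $nr$, and irreducibility follows because the generic stratum — $n$ distinct points, each carrying a length‑one quotient $\OO^{\oplus r}_{p} \onto \kappa(p)$, i.e.\ a point of $\BP^{r-1}$ — is irreducible of dimension $nr$ and dense (any quotient degenerates to the generic configuration).

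Next I would compute the generating function. By \Cref{thm:quot-punctual}, for a smooth curve the punctual Quot scheme at any closed point $p$ is isomorphic to $\Quot_{\BA^1}(\OO_{\BA^1}^{\oplus r},n)_0$, so the punctual series $\mathsf Q_{r,\sm_1}(t)$ is independent of $X$ and $\CE$. By \Cref{thm:Quot-Series} (applied with $e=0$, since a smooth variety has no singular points),
\[
\mathsf Q_\CE(t) = \mathsf Q_{r,\sm_1}(t)^{[C]}.
\]
It therefore suffices to identify $\mathsf Q_{r,\sm_1}(t)$, the punctual series over $\BA^1$. The punctual Quot scheme $\Quot_{\BA^1}(\OO^{\oplus r},n)_0$ parametrises surjections $\OO^{\oplus r}_{\BA^1} \onto \CF$ with $\CF$ supported at the origin and $\chi(\CF)=n$; equivalently, submodules $M \subset \bfk[t]^{\oplus r}$ (the kernel) of colength $n$ with $\bfk[t]^{\oplus r}/M$ supported at $0$. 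A standard computation (this is the $r$‑pointed analogue of the classical fact that $\Hilb^n(\BA^1)_0 = \Spec\bfk[t]/(t^n)$ is a single fat point of length $n$, contributing $t^n$ with no $\BL$) shows $\Quot_{\BA^1}(\OO^{\oplus r},n)_0$ is an affine space bundle over a product of projective spaces; concretely its class is the coefficient of $t^n$ in $\prod_{1 \leqslant i \leqslant r}(1-\BL^{i-1}t)^{-1}$. I would verify this either directly — stratify by the "Young‑diagram" type of the quotient as a $\bfk[t]$‑module and compute each stratum's class — or by citing the $d=1$ specialisation of \cite{ricolfi2019motive} (equivalently \cite{BFP19}), which is already invoked in the proof of \Cref{thm:Quot-Series}.

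Finally I would assemble: since $\mathsf Q_{r,\sm_1}(t) = \prod_{1\leqslant i\leqslant r}(1-\BL^{i-1}t)^{-1} = \Exp\bigl([\BP^{r-1}]t\bigr)$ (using $[\BP^{r-1}] = 1 + \BL + \cdots + \BL^{r-1}$ and the definition of $\Exp$), and since $\Exp$ is compatible with the power operation in the sense that $\Exp(a t)^{[C]} = \Exp([C]\,a\,t)$ for $a \in K_0(\Var_\bfk)$, we get
\[
\mathsf Q_\CE(t) = \Exp\bigl([\BP^{r-1}]t\bigr)^{[C]} = \Exp\bigl([C\times\BP^{r-1}]t\bigr) = \prod_{1\leqslant i\leqslant r}\zeta_C(\BL^{i-1}t),
\]
where the last equality uses $\zeta_C(s) = \Exp([C]s)$ and $\Exp$ converting sums into products. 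Rationality of $\mathsf Q_\CE(t)$ in $t$ then follows from rationality of the Kapranov zeta function $\zeta_C(t)$ of a curve. The main obstacle is the explicit identification of the punctual series $\mathsf Q_{r,\sm_1}(t)$: everything else is formal manipulation with the power structure, but pinning down the class of $\Quot_{\BA^1}(\OO^{\oplus r},n)_0$ requires either a genuine stratification argument over $\bfk[t]$‑module types or an appeal to the cited literature. Given that \cite{ricolfi2019motive,BFP19} are already in play in the surrounding text, the cleanest route is to cite them for this input and spend the proof's energy on smoothness, irreducibility, the dimension count, and the power‑structure bookkeeping.
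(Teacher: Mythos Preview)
The paper does not actually prove this theorem: it is stated in \Cref{sec:state-of-art} purely as a citation to \cite{BFP19,ricolfi2019motive}, with no accompanying argument. So there is no ``paper's own proof'' to compare against.

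That said, your outline is essentially correct and self-contained modulo the one input you flag. The smoothness argument is fine: on a smooth curve the kernel $\CK$ of $\CE\onto\CF$ is locally free of rank $r$, so $\Ext^1_C(\CK,\CF)=H^1(C,\CK^\vee\otimes\CF)=0$ and the tangent space has dimension $h^0(\CK^\vee\otimes\CF)=nr$. The reduction via \Cref{thm:Quot-Series} to the punctual series $\mathsf Q_{r,\sm_1}(t)$ is exactly how the paper's machinery is meant to be used, and the power-structure identity $\Exp(at)^{[C]}=\Exp([C]\,a\,t)$ follows from axiom (5) in \Cref{def:power_structure}. The only genuine content you defer is the identification of $[\Quot_{\BA^1}(\OO^{\oplus r},n)_0]$; since the theorem is already attributed to \cite{BFP19,ricolfi2019motive}, citing them for that step is entirely appropriate. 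One minor point: for the rationality claim, Kapranov's theorem gives rationality of $\zeta_{\overline C}(t)$ for $\overline C$ smooth \emph{projective}; if $C$ is only quasiprojective you should remark that $\zeta_C(t)=(1-t)^k\,\zeta_{\overline C}(t)$ for a smooth compactification $\overline C$ with $k$ punctures, which preserves rationality.
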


If $S$ is a smooth surface, then $\Quot_S(\CE,n)$ is singular as soon as $r>1$, but Ellingsrud--Lehn proved it is irreducible of dimension $n(r+1)$. The fibres of the Quot-to-Chow morphism $\mathsf q_{\CE,n}$ are irreducible as well \cite[Thm.~1]{Ellingsrud_Lehn}.

In the case $X=\BA^2$, the Quot scheme $\Quot_{\BA^2}(\OO_{\BA^2}^{\oplus r},n)$ sits inside the moduli space of framed sheaves on $\BP^2$ as a closed subscheme, cut out by a section of a tautological bundle. Mozgovoy proved the following motivic formula.

\begin{theorem}[Mozgovoy {\cite{mozgovoy2019motivic}}]
If $S$ is a smooth surface, $\CE\in\Coh(S)$ is locally free of rank $r$, then
\[
\mathsf Q_\CE(t) = \Exp\left(\frac{[S \times \BP^{r-1}]t}{1-\BL^rt}\right).
\]
\end{theorem}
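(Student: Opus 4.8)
The plan is to reduce the statement to the affine plane and then compute there by torus localisation. Since $S$ is smooth, \Cref{thm:Quot-Series} applied with no singular points ($e=0$) gives, for every locally free sheaf $\CE$ of rank $r>0$, the factorisation
\[
\mathsf Q_\CE(t) \;=\; \mathsf Q_{r,\sm_2}(t)^{[S]},
\qquad
\mathsf Q_{r,\sm_2}(t) \;=\; \sum_{n\geqslant 0}\,\bigl[\Quot_{\BA^2}(\OO_{\BA^2}^{\oplus r},n)_0\bigr]\,t^n .
\]
The plethystic exponential is compatible with the power operation of the power structure: writing $\Exp(g)=\prod_{m\geqslant 1}(1-t^m)^{-g_m}$ for $g=\sum_m g_m t^m$, axioms (3) and (5) of \Cref{def:power_structure} give $\Exp(g)^{[S]}=\prod_{m\geqslant 1}(1-t^m)^{-g_m[S]}=\Exp\bigl([S]\,g\bigr)$. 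Since $[S][\BP^{r-1}]=[S\times\BP^{r-1}]$, it is therefore enough to prove the punctual identity
\begin{equation}\label{eqn:sketch-punctual}
\mathsf Q_{r,\sm_2}(t) \;=\; \Exp\!\left(\frac{[\BP^{r-1}]\,t}{1-\BL^r t}\right).
\end{equation}
For $r=1$ this reads $\sum_{n\geqslant 0}[\Hilb^n(\BA^2)_0]\,t^n=\prod_{m\geqslant 1}(1-t^m)^{-\BL^{m-1}}$, the classical motive of the punctual Hilbert scheme of the plane; and comparing $t^1$-coefficients in \eqref{eqn:sketch-punctual} gives $[\Quot_{\BA^2}(\OO_{\BA^2}^{\oplus r},1)_0]=[\BP^{r-1}]$, the space of length-one quotients of $\bfk^{\oplus r}$ up to isomorphism, which is consistent.

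Identity \eqref{eqn:sketch-punctual} is the content of \cite{mozgovoy2019motivic}, and the route I would follow is torus localisation via the ADHM presentation. Recall that $\Quot_{\BA^2}(\OO_{\BA^2}^{\oplus r},n)$ is the quotient by $\GL_n$ of the space of stable tuples $(B_1,B_2,i,j)$ --- with $B_1,B_2\in\End_\bfk(\bfk^{n})$, $i\in\Hom_\bfk(\bfk^{r},\bfk^{n})$, $j\in\Hom_\bfk(\bfk^{n},\bfk^{r})$, satisfying $[B_1,B_2]+ij=0$ --- on which moreover $j=0$; equivalently, it is the zero scheme of the tautological section $j$ of a vector bundle on the smooth, semiprojective Nakajima quiver variety $M(r,n)$ of framed torsion-free sheaves on $\BP^2$, and the punctual locus $\Quot_{\BA^2}(\OO_{\BA^2}^{\oplus r},n)_0$ is the further closed subscheme on which $B_1,B_2$ are nilpotent. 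The torus $\TT=\BG_m^2\times\BG_m^r$, scaling the two coordinates of $\BA^2$ and the $r$ framing directions, acts compatibly with all of this and has isolated fixed points, indexed by the $r$-tuples of partitions $\vec\lambda=(\lambda^{(1)},\dots,\lambda^{(r)})$ with $\sum_j|\lambda^{(j)}|=n$ (the monomial quotients). Fixing a generic one-parameter subgroup of $\TT$, the Bia\l ynicki--Birula decomposition of the smooth $M(r,n)$ restricts to a decomposition of $\Quot_{\BA^2}(\OO_{\BA^2}^{\oplus r},n)_0$ into affine cells, one per fixed point, the cell attached to $\vec\lambda$ being an affine space of dimension $d(\vec\lambda)$ equal to the number of its positive weights on the Zariski tangent space at that point; this number is read off the ADHM deformation complex and is an explicit combinatorial statistic in $\vec\lambda$. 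Consequently
\[
\bigl[\Quot_{\BA^2}(\OO_{\BA^2}^{\oplus r},n)_0\bigr] \;=\; \sum_{|\vec\lambda|=n}\BL^{\,d(\vec\lambda)}\ \in\ K_0(\Var_\bfk),
\]
and \eqref{eqn:sketch-punctual} follows by summing over $n$ and rewriting $\sum_{\vec\lambda}\BL^{\,d(\vec\lambda)}t^{|\vec\lambda|}$ as the stated plethystic exponential.

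I expect the real difficulty to be concentrated in this last step, and to have two sources. First, $\Quot_{\BA^2}(\OO_{\BA^2}^{\oplus r},n)_0$ is singular for $r>1$, so it is not formal that its Bia\l ynicki--Birula strata are affine spaces: one uses that the ambient $M(r,n)$ is smooth and semiprojective, so its own attracting cells are affine spaces, and then one must verify that the tautological section carving out the Quot scheme is, in suitable linear coordinates on each cell, again affine-linear, so that its zero locus inside the cell stays linear. Second, the combinatorial identity converting the partition sum $\sum_{\vec\lambda}\BL^{\,d(\vec\lambda)}t^{|\vec\lambda|}$ into $\Exp\bigl(\tfrac{[\BP^{r-1}]t}{1-\BL^r t}\bigr)$ --- in which $[\BP^{r-1}]$ and the shift by $\BL^r$ package the weights coming from the framing --- is a genuine computation. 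An alternative to localisation, also carried out in \cite{mozgovoy2019motivic}, is to bypass the explicit cell dimensions and instead run a motivic Hall-algebra / wall-crossing argument relating these framed counts to the (already understood) unframed sheaf-counting series on $\BA^2$; with that approach the main obstacle shifts to setting up the motivic integration map and proving the requisite no-pole statement.
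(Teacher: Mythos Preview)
The paper does not prove this theorem: it is stated in \Cref{sec:state-of-art} as an external result of Mozgovoy, with a citation and no argument. So there is no ``paper's own proof'' to compare against.

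That said, your sketch is a coherent outline, and the reduction step is entirely in the spirit of this paper: applying \Cref{thm:Quot-Series} with $e=0$ to get $\mathsf Q_\CE(t)=\mathsf Q_{r,\sm_2}(t)^{[S]}$, and then using the power-structure identity $\Exp(g)^{[S]}=\Exp([S]\cdot g)$ to reduce to the punctual series, is exactly how the paper's machinery is meant to be used. Your derivation of $\Exp(g)^{[S]}=\Exp([S]\,g)$ from axioms (3) and (5) of \Cref{def:power_structure} is correct.

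The genuine content, as you yourself flag, is the punctual identity $\mathsf Q_{r,\sm_2}(t)=\Exp\bigl([\BP^{r-1}]t/(1-\BL^r t)\bigr)$, and here your sketch is honest about being incomplete. Two remarks. First, Mozgovoy's actual argument in \cite{mozgovoy2019motivic} proceeds via motivic Hall algebras and wall-crossing rather than ADHM localisation, so the route you emphasise is not the one in the cited reference (though you do mention the Hall-algebra alternative at the end). Second, the step you identify as delicate --- that the Bia\l{}ynicki--Birula cells of the singular punctual Quot scheme are still affine spaces --- is indeed the crux of any localisation proof and is not settled by the smoothness of the ambient $M(r,n)$ alone; one really has to analyse how the equations $j=0$ and the nilpotency conditions interact with the attracting-cell coordinates. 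So as written this is a reasonable roadmap, but not yet a proof: the combinatorial identity packaging $\sum_{\vec\lambda}\BL^{d(\vec\lambda)}t^{|\vec\lambda|}$ into the stated $\Exp$ would still need to be carried out explicitly, and that computation is where the work lies.
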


As far as we know, not much is known if $d=\dim X>2$. For $d=3$, the situation is clearer for \emph{virtual} invariants: the Quot scheme $\Quot_{\BA^3}(\OO_{\BA^3}^{\oplus r},n)$ is the key player in higher rank Donaldson--Thomas theory of points \cite{Virtual_Quot,FMR_K-DT,cazzaniga2020higher}. In all dimensions $d>2$, the Quot scheme $\Quot_{\BA^d}(\OO_{\BA^d}^{\oplus r},n)$ is moreover isomorphic to the moduli space of framed sheaves on $\BP^d$ \cite{cazzaniga2020framed}.

We close this section proposing a (hard) open problem.

\begin{openproblem}\label{open-problem-polynomial-in-L}
Fix integers $r,d,n>0$. The following questions have positive answer for $d\leqslant 2$ (cf.~\Cref{sec:state-of-art}) but, to the best of our knowledge, are open for $d>2$.
\begin{itemize}
    \item [\mylabel{op-1}{(1)}] Is $[\Quot_{\BA^d}(\OO_{\BA^d}^{\oplus r},n)_0] \in K_0(\Var_\bfk)$ a polynomial in $\BL$?
    \item [\mylabel{op-2}{(2)}] Is $[\CCoh^n(\BA^d)_0] \in K_0(\St^{\aff}_{\bfk})$ a rational function in $\BL$?
\end{itemize}
\end{openproblem}

\appendix

\section{Failure of the fibration property}\label{app:et-vs-zar}

Let $n\ge 1$, $\alpha=(1^{\alpha_1}2^{\alpha_2}\cdots n^{\alpha_n})$ a partition of $n$, $X$ a smooth curve.
Then, by \Cref{prop:zar-triviality}\,\ref{punctual-3}, the Coh-to-Chow morphism $\CCoh^n_\alpha(X)\to \Sym^n_\alpha(X)$ is an \'etale locally trivial fibration with fibre 
\[
\prod_{i \,|\, \alpha_i\,\neq\, 0} \CCoh^i(\BA^1)_0^{\alpha_i},
\]
but the class of $\CCoh^n_\alpha(X)$ in the Grothendieck group is not the product of the class of $\Sym^n_\alpha(X)$ and the class of the fiber. We now provide an explicit example.

\begin{example} 
Set $X=\BA^1$, $n=2$, $\alpha=(1^2)$. 
We know by \Cref{cor:affine_line} that 
\[
\frac{\BL^4}{(\BL^2-1)(\BL^2-\BL)} =[\CCoh^2(\BA^1)] =  [\CCoh^2_{(2^1)}(\BA^1)] + [\CCoh^2_{(1^2)}(\BA^1)].
\]
A sheaf of length $2$ and support in one point is either $\kappa(x) \oplus \kappa(x)$, or the structure sheaf of $\Spec \bfk[t]/t^2$. In the former case, the automorphism group is $\GL_2$, in the latter it is $\GL_1 = \bfk^\times$. The support map
\[
\CCoh^2_{(2^1)}(\BA^1) \to \Sym^2_{(2^1)}(\BA^1) = \BA^1
\]
is \emph{trivial} by \Cref{prop:zar-triviality}\,\ref{punctual-1}, thus
\[
[\CCoh^2_{(2^1)}(\BA^1)] = \BL [\CCoh^2(\BA^1)_0] = \BL\left(\frac{1}{[\GL_2]} + \frac{1}{[\GL_1]}\right) = \frac{\BL + \BL (\BL+1)(\BL^2-\BL)}{(\BL^2-1)(\BL^2-\BL)}.
\]
It follows that 
\[
[\CCoh^2_{(1^2)}(\BA^1)] = \frac{\BL^2-\BL}{(\BL^2-1)(\BL^2-\BL)} = \frac{1}{\BL^2-1}.
\]
However, 
\[
[\Sym^2_{(1^2)}(\BA^1)] = [\Sym^2\BA^1] - [\Sym^2_{(2^1)}(\BA^1)] = \BL^2-\BL
\]
and
\[
[\CCoh^1(\BA^1)_0] = \frac{1}{\BL-1}
\]
but 
\[
[\Sym^2_{(1^2)}(\BA^1)]\cdot[\CCoh^1(\BA^1)_0]^2 =
\frac{\BL^2-\BL}{(\BL-1)^2} = \frac{\BL}{\BL-1} \neq \frac{1}{\BL^2-1} = [\CCoh^2_{(1^2)}(\BA^1)].
\]
\end{example}

\section{The Coh-to-Chow morphism over an affine base}\label{sec:supp-map}

Let $\CF$ be a coherent sheaf of dimension $0$ on a $\bfk$-variety $X$, with $\chi(\CF)=n$. Then
\[
\cycle(\CF) = \sum_{x \in \Supp(\CF)} \length_{\OO_{X,x}}\CF_x \cdot x
\]
is a closed point of $\Sym^n(X)$.
We now turn the association $\CF \mapsto \cycle(\CF)$ into a functorial operation.
The proof of the following result is due to Rydh \cite[IV, Prop.~7.8]{Rydh1} and works in \emph{much greater} generality. We give here a simplified account which is enough for the purpose of this paper.

\begin{theorem}[Rydh]\label{thm:coh-to-sym}
Let $S=\Spec A$, where $A$ is a ring containing $\BQ$ as a subring. Let $X \to S$ be a separated morphism of schemes. For any $n \in \BN$ there is an $S$-morphism
\[
\supp^n_{X/S} \colon \CCoh^n(X/S) \to \Sym^n(X/S)
\]
which, if $X$ is quasiprojective over $S=\Spec \bfk$, sends $[\CF] \in \CCoh^n(X)(\bfk)$ to the $0$-cycle $\cycle(\CF) \in \Sym^n(X)(\bfk)$.
\end{theorem}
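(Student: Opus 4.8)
The plan is to construct the morphism $\supp^n_{X/S}$ in two stages, reducing first to a universal case and then handling that case by hand.

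\textbf{Step 1: Reduction to the universal length-$n$ sheaf.} Recall that giving an $S$-morphism $\CCoh^n(X/S) \to \Sym^n(X/S)$ is the same as giving a natural transformation of functors, i.e.\ for every $S$-scheme $T$ and every $T$-family $\cF$ of $0$-dimensional length-$n$ sheaves on $X_T = X\times_S T$, a relative effective $0$-cycle of degree $n$ on $X_T \to T$, compatibly with base change. By the atlas constructed in \Cref{thm:coh-is-algebraic} (in the quasiprojective-over-$\bfk$ case, the presentation $\CCoh^n(X) = [A_{X,n}/\GL_n]$ with $A_{X,n} \subset \Quot_X(\OO_X^{\oplus n},n)$ recalled in \Cref{sec:0-dim}), it suffices to define the cycle map on the Quot scheme $\Quot_X(\OO_X^{\oplus n},n)$ — equivalently on the universal quotient sheaf over it — in a $\GL_n$-equivariant way. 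So the problem becomes: given the universal $T$-flat family $\cF$ on $X_T$ with finite support over $T$ and fibrewise length $n$, produce a relative $0$-cycle. First I would reduce, by choosing an affine cover of $X$ compatible with the (finite, hence affine) support, to the case where $X = \Spec B$ is affine over $S = \Spec A$; then the support of $\cF$ is a finite $A$-algebra $C$ (a quotient of $B$), $\cF$ corresponds to a finite projective $A$-module $M$ of rank $n$ with its $C$-action, and we must produce an element of $\Sym^n(X/S)(A) = \Hom_{A\text{-alg}}((B^{\otimes_A n})^{\FS_n}, A)$ — or, in Rydh's language, a multiplicative degree-$n$ polynomial law $B \to A$.

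\textbf{Step 2: The norm/characteristic-polynomial construction.} Over an affine base the cycle attached to $M$ is the one given by the \emph{norm}: for $b \in B$ acting on the rank-$n$ projective $A$-module $M$ (through $B \twoheadrightarrow C$), set $\mathrm{N}(b) = \det_A(b \mid M) \in A$. This assignment $b \mapsto \mathrm{N}(b)$ is multiplicative, and since $\det$ of an endomorphism of a rank-$n$ projective module is a homogeneous degree-$n$ polynomial law, $\mathrm{N}$ is exactly a multiplicative degree-$n$ polynomial law on $B$ over $A$, hence corresponds (this is the universal property, already for $A \supset \BQ$ one has the clean description via $(B^{\otimes n})^{\FS_n}$, by \cite{Rydh1} or the discussion in \Cref{sec:sym^n(X)}) to an $A$-point of $\Sym^n(X/S)$. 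Functoriality in $A$ is immediate because $\det$ commutes with base change of the projective module $M$; glueing over the affine cover of $X$ uses that the cycle is supported on the finite-over-$T$ locus and that the norm construction on overlaps agrees (both compute the same cycle on the intersection of supports). This produces the natural transformation, hence the morphism $\supp^n_{X/S}$; $\GL_n$-equivariance on the atlas is automatic since $\det$ is conjugation-invariant. Finally, for $X$ quasiprojective over $S = \Spec\bfk$ and $[\cF] \in \CCoh^n(X)(\bfk)$, one checks on closed points that $\mathrm{N}$ recovers $\sum_{x} \length_{\OO_{X,x}}\cF_x \cdot x$: working locally at a point $x$ in the support, $\cF_x$ is a finite-length $\OO_{X,x}$-module and the characteristic polynomial of a uniformizing parameter has a zero of order exactly $\length_{\OO_{X,x}}\cF_x$ at $x$ — this is the standard computation that $\det$ over the residue field sees each point with multiplicity equal to the length.

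\textbf{Main obstacle.} The genuinely delicate point is not the norm construction itself but verifying that $b \mapsto \det_A(b\mid M)$ really is a \emph{polynomial law} (not merely a map of sets for each $A$) and that it satisfies the multiplicativity and base-change compatibilities needed to land in $\Sym^n(X/S)$ as defined functorially — i.e.\ identifying $\Sym^n$ with the scheme of multiplicative degree-$n$ polynomial laws (or divided powers) and checking the norm is one of them, uniformly in the base. In the generality of \cite{Rydh1} this is substantial; here, since $A \supset \BQ$, the divided-power and $\FS_n$-invariant descriptions coincide and one can argue more directly, but one still must handle the glueing across the affine cover carefully, since $\Supp(\cF)\to T$ need not be flat and the "number $n$" is only locally constant on $T$ — so I would first pass to connected components of $T$ to fix $n$, then glue. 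I expect this glueing-plus-polynomial-law bookkeeping, rather than any single hard idea, to be where the real work lies; everything else is formal once the universal family on the Quot atlas is in hand.
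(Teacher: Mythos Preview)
Your core construction is exactly what the paper does: in the affine case one associates to the locally free rank-$n$ $R$-module $M$ (with its $B\otimes_A R$-action) the norm $b\mapsto\det_R(b\mid M)$, observes this is a multiplicative polynomial law of homogeneous degree $n$, and invokes the universal property $\Pol^n_{\mult}(B\otimes_AR,R)=\Hom_{\Sch_S}(T,\Sym^n(X/S))$ (valid here since $A\supset\BQ$ identifies $\Gamma^n$ with $\Sym^n$). So Step~2 matches the paper's Step~III essentially verbatim.

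Where you diverge is in the reduction. You pass through the Quot atlas and $\GL_n$-equivariance, then plan to choose an affine cover of $X$ and glue. The paper's reduction is both simpler and more general: work directly with $T$-points (it suffices to take $T$ affine since $\CCoh^n(X/S)$ is limit-preserving), and then, given a $T$-family $\cF$, \emph{replace $X$ by the scheme-theoretic support $Z=\Supp(\cF)$}. Since $Z\to T$ is finite and $T$ is affine, $Z$ is affine too, and $\cF$ lives on $Z$; no cover, no glueing, no $\GL_n$-descent needed. This sidesteps exactly the ``main obstacle'' you flagged --- the glueing bookkeeping disappears entirely --- and it also avoids invoking the Quot-scheme presentation, which in this paper is only established under the extra hypotheses of \Cref{thm:coh-is-algebraic} (projective $f$ with $f_\ast\OO_X=\OO_B$ universally), whereas the theorem is stated for arbitrary separated $X\to S$. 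Your route would work in the quasiprojective case but carries unnecessary baggage; the paper's replace-$X$-by-the-support trick is the cleaner move.
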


The proof of this result needs some algebraic machinery, that we now introduce following Rydh \cite{Rydh1} and Roby \cite{Roby}. In what follows, we denote by $\Alg_A$ the category of algebras over a ring $A$. The symbol $\BN$ denotes the set of natural numbers $\set{0,1,2,\ldots}$.

\subsection*{Multiplicative polynomial laws}
Let $A$ be a ring, $M$ an $A$-module. Consider the covariant functor $\mathsf F_M \colon \Alg_A \to \Sets$ defined by $\mathsf F_M(A') = M \otimes_AA'$. If $N$ is another $A$-module, a \emph{polynomial law from $M$ to $N$} is a natural transformation $\eta \colon \mathsf F_M \Rightarrow \mathsf F_N$. We say that a polynomial law is \emph{homogeneous of degree $n \in \BN$} if for all $A' \in \Alg_A$ one has $\eta_{A'}(ax) = a^n\eta_{A'}(x)$ for all $a \in A'$ and for all $x \in M \otimes_AA'$, where $\eta_{A'} \colon M \otimes_AA' \to N \otimes_AA'$ is the map defined by $\eta$ on the object $A'$. We denote by $\Pol^n(M,N)$ the set of polynomial laws from $M$ to $N$ which are homogeneous of degree $n$.

\begin{example}
Any $A$-linear map $M \to N$ induces a polynomial law from $M$ to $N$. 
\end{example}

A canonical structure of $A$-module can be put on the set of polynomial laws from $M$ to $N$ \cite[p.~222]{Roby}, and moreover the set $\Pol^n(M,N)$ is an $A$-submodule \cite[Prop.~1.3]{Roby}.

A special case is when $M=B$ and $N=C$ are $A$-algebras themselves. In this case, a polynomial law $\eta$ from $B$ to $C$ is said to be \emph{multiplicative} if for every $A' \in \Alg_A$ the map $\eta_{A'} \colon B \otimes_AA' \to C \otimes_AA'$ preserves the ring identity and satisfies $\eta_{A'}(xy) = \eta_{A'}(x)\eta_{A'}(y)$ for every $x,y \in B \otimes_AA'$. We denote by $\Pol^n_{\mult}(B,C)$ the set of multiplicative polynomial laws from $B$ to $C$ which are homogeneous of degree $n$.

\subsection*{Symmetric algebra and symmetric tensors}
Let $A$ be a ring, $M$ an $A$-module. Denote by $\Ten_A(M) = \bigoplus_{n \geqslant 0} \Ten^n_A(M)$ the tensor algebra of $M$ (with algebra structure given by the tensor product $\otimes_A$). Consider the two-sided ideal $I\subset \Ten_A(M)$ generated by elements $x\otimes_Ay - y \otimes_Ax$ for $x, y \in \Ten_A(M)$. 
The quotient
\[
\Sym_A(M) = \Ten_A(M)/I
\]
is the \emph{symmetric algebra} of $M$, equipped with its natural grading
\[
\Sym_A(M) = \bigoplus_{n\geqslant 0}\Sym^n_A(M).
\]
We have $\Ten^0_A(M) = A = \Sym_A^0(M)$ and $\Ten^1_A(M) = M = \Sym^1_A(M)$. 
There is a graded $A$-submodule $\TS_A(M) \subset \Ten_A(M)$, called the $A$-\emph{module of symmetric tensors}, defined by
\[
\TS_A(M) = \bigoplus_{n \geqslant 0} \TS_A^n(M), \quad \TS_A^n(M) = \Ten^n_A(M)^{\FS_n} \subset \Ten^n_A(M),
\]
where the symmetric group $\FS_n$ acts on a homogeneous tensor of degree $n$ by 
\[
\sigma(x_1 \otimes_A \cdots \otimes_A x_n) = x_{\sigma(1)} \otimes_A \cdots \otimes_A x_{\sigma(n)}.
\]
The $A$-module $\TS_A(M)$ has a commutative, associative $A$-algebra structure given by the \emph{shuffle product} (see \cite[p.~253]{Roby} or \cite[I \S (1.1.5)]{Rydh1}).

\subsection*{Divided powers}
Let $A$ be a ring, $M$ an $A$-module. There is a graded $A$-algebra 
\begin{equation}\label{eqn:div-power-algebra}
\Gamma_A(M) = \bigoplus_{n \geqslant 0}\Gamma^n_A(M)
\end{equation}
called the \emph{algebra of divided powers}, satisfying $\Gamma_A^0(M) = A$ and $\Gamma_A^1(M) = M$. We now quickly review its definition following \cite[Ch.~III]{Roby}.

Given $(x,n) \in M \times \BN$, introduce a formal variable $\XX_{(x,n)}$. Form the polynomial $A$-algebra $G = A[\XX_{(x,n)}\,|\,(x,n) \in M \times \BN]$. Consider the collection of polynomials
\begin{equation}\label{eqn:div-powers-relations}
    \begin{split}
        &\XX_{(x,0)}-1 \\
        &\XX_{(\lambda x,n)}-\lambda^n \XX_{(x,n)} \\
        &\XX_{(x,m)}\XX_{(x,n)} - ((m,n)) \XX_{(x,m+n)} \\
        &\XX_{(x+y,n)}-\sum_{0\leqslant i\leqslant n}\XX_{(x,i)}\XX_{(y,n-i)}
    \end{split}
\end{equation}
where $x,y \in M$, $\lambda \in A$ and $m,n \in \BN$. In the third equation, we have set $((m,n)) = \binom{m+n}{m}$.
Let $J \subset G$ be the ideal generated by the polynomials in \eqref{eqn:div-powers-relations}. Set
\[
\Gamma_A(M) = G/J.
\]
Denote by $x^{[n]} \in \Gamma_A(M)$ the image of $\XX_{(x,n)} \in G$ along the quotient map. In $\Gamma_A(M)$, the generators $x^{[n]}$ satisfy the relations
\begin{equation}\label{eqn:div-powers-generators-relations}
    \begin{split}
       x^{[0]} &= 1 \\
       (\lambda x)^{[n]} &= \lambda^n x^{[n]} \\
       x^{[m]}x^{[n]} &= ((m,n)) x^{[m+n]} \\
       (x+y)^{[n]} &= \sum_{0\leqslant i\leqslant n}x^{[i]}y^{[n-i]}.
    \end{split}
\end{equation}
There is a unique $\BN$-grading on $G$, compatible with its $A$-algebra structure, for which $\XX_{(x,n)}$ has degree $n$. With respect to this grading, the ideal $J$ is homogeneous, thus the quotient $\Gamma_A(M) = G/J$ inherits a natural $\BN$-grading, namely \eqref{eqn:div-power-algebra} above, for which $x^{[n]}$ has degree $n$. Each operation $(-)^{[n]}$ is a natural map
\[
\begin{tikzcd}
M \arrow{r}{\gamma^n} & \Gamma_A^n(M), \quad x \mapsto x^{[n]}.
\end{tikzcd}
\]

\smallbreak
Consider the canonical injection $\gamma^1\colon M \into \Gamma_A(M)$ defined by sending $x \mapsto x^{[1]}$. We write $x$ instead of $x^{[1]}$ when we view it inside the algebra of divided powers. The third relation in \eqref{eqn:div-powers-generators-relations} readily implies $xx^{[n-1]} = nx^{[n]}$ for all $n \geqslant 1$, which in turn yields, by induction,
\[
x^n = n! x^{[n]}, \quad n \in \BN.
\]
In particular, if $A \supset \BQ$, one can make sense of the relation $x^{[n]} = x^n / n!$ in $\Gamma_A(M)$.

\subsection*{Divided powers and symmetric algebra}
The canonical injection $\gamma^1\colon M \into \Gamma_A(M)$ extends to an $A$-algebra homomorphism $\theta \colon \Sym_A(M) \to \Gamma_A(M)$ sending $x^{\vee n} \mapsto x^n$, where `$\vee$' denotes the multiplication on $\Sym_A(M)$. On the other hand, if $A \supset \BQ$, one has the $A$-algebra homomorphism $G \to \Sym_A(M)$ sending $\XX_{(x,n)} \mapsto x^{\vee n} / n!$. This descends to the quotient, giving rise to a homomorphism $\zeta \colon \Gamma_A(M) \to \Sym_A(M)$ sending $x^{[n]} \mapsto x^{\vee n} / n!$. The homomorphisms $\theta$ and $\zeta$ are inverse to each other. Therefore
\[
\Gamma_A(M) \cong \Sym_A(M)
\]
as soon as $A \supset \BQ$.

\subsection*{Divided powers and symmetric tensors}

As proved in \cite[Prop.~III.1, p.~254]{Roby}, there is one and only one $A$-algebra homomorphism 
\[
\rho \colon \Gamma_A(M) \to \TS_A(M)
\]
sending $x^{[n]} \mapsto x^{\otimes_An}$. Let $q \colon \Ten_A(M) \to \Sym_A(M)$ be the quotient map. Its restriction $\TS_A(M) \into \Ten_A(M) \to \Sym_A(M)$ is \emph{not} an $A$-algebra homomorphism. However, as observed in \cite[Prop.~III.3, p.~256]{Roby}, making a homogeneous element $z \in \Sym_A^n(M)$ take a tour around the diagram
\[
\begin{tikzcd}
& \Gamma_A(M)\arrow{dr}{\rho} & \\
\Sym_A(M)\arrow{ur}{\theta} & & \TS_A(M) \arrow{ll}{q}
\end{tikzcd}
\]
produces the same element $z$ multiplied by $n!$. One deduces an isomorphism of $A$-algebras
\[
\Gamma_A(M) \cong \TS_A(M)
\]
as soon as $A \supset\BQ$.

\subsection*{Divided powers and polynomial laws}
Sending $M \mapsto \Gamma_A(M)$ is a functor from $A$-modules to graded $A$-algebras \cite[Ch.~III \S 4, p.~251]{Roby}. There are base change isomorphisms 
\[
\begin{tikzcd}
\Gamma_A(M) \otimes_AA' \arrow{r}{\sim} & \Gamma_{A'}(M \otimes_AA'), \quad A' \in \Alg_A
\end{tikzcd}
\]
sending $x^{[n]} \otimes_A 1 \mapsto (x \otimes_A1)^{[n]}$ (cf.~\cite[Thm.~III.3, p.~262]{Roby}), therefore showing that $\gamma^n \in \Pol^n(M,\Gamma_A^n(M))$. In fact, one has functorial bijections
\[
\begin{tikzcd}
\Hom_A(\Gamma^n_A(M),N) \arrow{r}{\sim} & \Pol^n(M,N), \quad h \mapsto h \circ \gamma^n
\end{tikzcd}
\]
for every $n \geqslant 0$ \cite[Thm.~IV.1, p.~266]{Roby}, where `$\circ$' is the natural composition of polynomial laws. This can be interpreted as a universal property for $\Gamma_A(M)$. 

Given $A$-modules $M$ and $N$, one always has a multiplication map \cite{Roby}
\[
\Gamma_A^n(M) \otimes_A \Gamma^n_A(N) \to \Gamma_A^n(M\otimes_AN),
\]
sending $x^{[n]} \otimes y^{[n]} \mapsto (x \otimes_A y)^{[n]}$.

Let $B$ be an $A$-algebra with multiplication $B\otimes_AB \to B$. 
There is an $A$-algebra structure on $\Gamma_A^n(B)$ obtained as follows: one can exploit functoriality of $\Gamma^n_A(-)$ to form the composition
\[
\Gamma_A^n(B) \otimes_A \Gamma^n_A(B) \to \Gamma_A^n(B\otimes_AB) \to \Gamma_A^n(B).
\]
This produces an $A$-algebra structure on $\Gamma_A^n(B)$, with ring identity $\gamma^n(1)$, satisfying $x^{[n]}y^{[n]} = (xy)^{[n]}$ for all $x,y \in B$. This relation turns $\gamma^n \colon B \to \Gamma_A^n(B)$ into a multiplicative polynomial law of homogeneous degree $n$. If $B$ and $C$ are $A$-algebras, we have the universal property \cite{Roby}
\[
\begin{tikzcd}
\Hom_{\Alg_A}(\Gamma^n_A(B),C) \arrow{r}{\sim} & \Pol_{\mult}^n(B,C), \quad h \mapsto h \circ \gamma^n.
\end{tikzcd}
\]

\subsection*{The scheme of divided powers}
Here we follow \cite[I, \S 1.4]{Rydh1}. Let $S$ be a scheme, $\CA$ a quasicoherent sheaf of $\OO_S$-algebras.
A quasicoherent sheaf of $\OO_S$-algebras $\Gamma^n_{\OO_S}(\CA)$ may be constructed thanks to the fact that $\Gamma_A(-)$ commutes with all localisations of $A$. If $f \colon X \to S$ is an affine morphism, one defines the $S$-scheme
\[
\Gamma^n(X/S) = \Spec_{\OO_S} \Gamma^n_{\OO_S}(f_\ast \OO_X).
\]
The operation $\Gamma^n(-)$ is an endofunctor on schemes affine over $S$.

For any affine $S$-scheme $T$, we have
\begin{equation}\label{eqn:div-pow-basechange}
\begin{split}
    \Hom_{\Sch_S}(T,\Gamma^n(X/S)) 
    &= \Hom_{\Sch_T}(T,\Gamma^n(X/S) \times_ST)\\
    &=\Hom_{\Sch_T}(T,\Gamma^n(X \times_ST/T)).
\end{split}
\end{equation}
In particular, if $X=\Spec B$ and $S=\Spec A$, then
\[
\Gamma^n(X/S) = \Spec \Gamma^n_A(B).
\]
If $A \supset \BQ$, since $(B^{\otimes_An})^{\FS_n} \cong \Gamma^n_A(B)$ as $A$-algebras, we have
\[
\Gamma^n(X/S) = \Spec\,(B^{\otimes_An})^{\FS_n} = \Sym^n(X/S).
\]

\subsection*{Families of 0-cycles}
We review Rydh's functor of families of cycles from \cite{Rydh1}, limiting ourselves to less general assumptions. In particular, we only talk about schemes and not algebraic spaces.

\begin{definition}[{\cite[I, Def.~3.1.1]{Rydh1}}]
Let $S$ be a scheme, $f \colon X\to S$ a separated morphism of schemes, locally of finite type (e.g.~a quasiprojective morphism). A \emph{family of 0-cycles of degree} $n$ relative to $f$ is a pair $(Z,\alpha)$, where $Z \into X$ is a closed subscheme such that $Z \to S$ is integral, and $\alpha \colon S \to \Gamma^n(Z/S)$ is a morphism. We have an equivalence relation $\sim$ on such pairs, given by declaring that $(Z_1,\alpha_1) \sim (Z_2,\alpha_2)$ when there is a closed subscheme $Z \into X$, admitting closed immersions $j_i \colon Z \into Z_i$, and a morphism $\alpha \colon S \to \Gamma^n(Z/S)$, such that $\alpha_i$ is the composition $j_{i\ast} \circ \alpha \colon S \to \Gamma^n(Z/S) \into \Gamma^n(Z_i/S)$ for $i = 1,2$.
\end{definition}

Rydh \cite[I, Def.~3.1.3]{Rydh1} defines a functor 
\[
\underline\Gamma^n_{X/S} \colon \Sch_S^{\opp} \to \Sets
\]
sending $T \to S$ to the set of equivalence classes of families of 0-cycles of degree $n$ relative to $X \times_ST \to T$. He proves the following result.

\begin{prop}[{\cite[I, Thm.~3.4.1,\,Cor.~4.2.5]{Rydh1}}]
Let $S$ be a scheme, $f \colon X\to S$ a separated morphism of schemes, locally of finite type. Then $\underline\Gamma^n_{X/S}$ is represented by a separated $S$-scheme $\Gamma^n(X/S)$, which coincides with $\Spec_{\OO_S} \Gamma^n_{\OO_S}(f_\ast \OO_X)$ if $f$ is affine. If $S$ has pure characteristic 0 or $X \to S$ is flat, there is a canonical isomorphism
\[
\begin{tikzcd}
    \Sym^n(X/S)\arrow{r}{\sim} & \Gamma^n(X/S).
\end{tikzcd}
\]
\end{prop}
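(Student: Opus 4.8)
The plan is to follow Rydh's construction from \cite{Rydh1} in three steps: the affine case, read off from the universal property of $\Gamma^n$; the globalisation, by gluing affine charts along ``addition of $0$-cycles'' morphisms; and the comparison with $\Sym^n$, reduced to a module-level identity between divided powers and symmetric tensors. \textbf{Affine case.} Take $S=\Spec A$ and $X=\Spec B$. A $T$-point of $\underline\Gamma^n_{X/S}$ is the equivalence class of a pair $(Z,\alpha)$ with $Z\into X_T$ a closed subscheme integral (hence affine) over $T$ and $\alpha\colon T\to\Gamma^n(Z/T)=\Spec_{\OO_T}\Gamma^n_{\OO_T}(g_\ast\OO_Z)$, the relation being generated by closed immersions $Z'\into Z$. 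Locally on $T$, by the universal property $\Hom_{\Alg_A}(\Gamma^n_A(C),D)\cong\Pol^n_{\mult}(C,D)$ recalled above, $\alpha$ is a multiplicative polynomial law of degree $n$ from $\OO_Z(Z)$ to $\OO_T(T)$, equivalently one from $B$ killing the ideal of $Z$; the equivalence relation discards the auxiliary $Z$ and keeps only the induced law. Since a canonical \emph{minimal} $Z$ exists, $\underline\Gamma^n_{X/S}(T)$ is identified with $\Pol^n_{\mult}(B,\OO_T(T))=\Hom_{\Alg_A}(\Gamma^n_A(B),\OO_T(T))=\Hom_S(T,\Spec\Gamma^n_A(B))$, which is representability in the affine case. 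The substantive input is the existence of the minimal $Z$, i.e.\ \cite[I, Thm.~3.4.1]{Rydh1}.

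\textbf{Globalisation.} For general separated, locally-of-finite-type $f\colon X\to S$, I would work Zariski-locally on $S$, assume $S$ affine, and pick a cover $X=\bigcup_iU_i$ by opens affine over $S$ with affine pairwise intersections (using separatedness). The affine case gives schemes $\Gamma^m(U_i/S)$, $\Gamma^m(U_i\cap U_j/S)$ and \emph{addition morphisms} $\Gamma^a(V/S)\times_S\Gamma^b(V/S)\to\Gamma^{a+b}(V/S)$, induced on coordinate rings by $\Gamma^{a+b}_A(C)\to\Gamma^a_A(C)\otimes_A\Gamma^b_A(C)$, $x^{[a+b]}\mapsto x^{[a]}\otimes x^{[b]}$ (the multiplicative degree-$(a+b)$ law $c\mapsto c^{[a]}\otimes c^{[b]}$). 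Gluing the pieces $\prod_i\Gamma^{n_i}(U_i/S)$ over all $(n_i)$ with $\sum_in_i=n$, along these addition morphisms and the identifications over overlaps, produces an $S$-scheme $\Gamma^n(X/S)$, separated over $S$ (valuative criterion, from separatedness of $X\to S$). That this glued scheme represents $\underline\Gamma^n_{X/S}$ rests on the fact that a family of $0$-cycles of degree $n$ over $T$, whose support is finite over $T$, splits — after a suitable locally closed stratification of $T$, or étale-locally if one wants the splitting on the nose — uniquely as $\sum_i\zeta_i$ with $\zeta_i$ a family of degree-$n_i$ cycles supported in $U_i$, the points of the support being separated by the chart containing them. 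This gluing and the sheaf axiom for $\underline\Gamma^n_{X/S}$ form the technical heart of \cite[I, \S\S3--4]{Rydh1}; I would cite it rather than reprove it, and it is the step I expect to be the main obstacle.

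\textbf{Comparison with $\Sym^n$.} Since $\Gamma^1(X/S)=X$ (a degree-$1$ relative cycle over $T$ is just a $T$-point of $X$), iterating the addition morphism gives an $S$-morphism $X^n=X\times_S\cdots\times_SX\to\Gamma^n(X/S)$, which is $\FS_n$-invariant and hence factors through a canonical morphism
\[
\Sigma\colon\Sym^n(X/S)=X^n/\FS_n\to\Gamma^n(X/S),
\]
which on geometric points sends $(x_1,\dots,x_n)$ to $\sum_j[x_j]$. To show $\Sigma$ is an isomorphism under the stated hypotheses one argues Zariski-locally on $S$ and on $X$, so for $S=\Spec A$, $X=\Spec B$, where $\Sym^n(X/S)=\Spec\,(B^{\otimes_An})^{\FS_n}=\Spec\TS^n_A(B)$, $\Gamma^n(X/S)=\Spec\Gamma^n_A(B)$, and $\Sigma$ is induced by the algebra map $\rho\colon\Gamma^n_A(B)\to\TS^n_A(B)$, $x^{[n]}\mapsto x^{\otimes_An}$, discussed above. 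If $A\supset\BQ$, then $\rho$ is exactly the isomorphism $\Gamma_A(M)\cong\TS_A(M)$ established in this section. If instead $X\to S$ (hence, locally, $B$ over $A$) is flat, reduce — via Lazard's theorem, using that $\Gamma^n_A(-)$ and $\TS^n_A(-)$ commute with filtered colimits — to $B$ free of finite rank over $A$, and then check that $\rho$ carries the divided-power monomial basis $\{\prod_kx_k^{[a_k]}:\sum_ka_k=n\}$ bijectively onto the monomial-symmetric-tensor basis of $\TS^n_A(B)$. Gluing the resulting local isomorphisms (legitimate since $\Sigma$ is globally defined) finishes the non-affine flat case. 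The characteristic-zero/flatness hypothesis enters precisely here and is genuinely necessary: $\Gamma^n$ and $\Sym^n$ differ in small characteristic.
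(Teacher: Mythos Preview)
The paper does not prove this proposition at all: it is stated with a bare citation to Rydh \cite[I, Thm.~3.4.1,\,Cor.~4.2.5]{Rydh1} and used as a black box in the construction of the support map. Your proposal therefore goes well beyond what the paper provides, giving an outline of Rydh's actual argument.

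Your sketch is broadly faithful to Rydh's approach and to the preparatory material the paper does include (the universal property $\Hom_{\Alg_A}(\Gamma^n_A(B),C)\cong\Pol^n_{\mult}(B,C)$, the map $\rho\colon\Gamma_A(M)\to\TS_A(M)$, and the identification $\Gamma_A(M)\cong\TS_A(M)$ when $A\supset\BQ$). You are right to flag the gluing step as the technical heart and to defer to \cite{Rydh1} there. Two small caveats: first, for general separated locally-of-finite-type $X\to S$ the quotient $X^n/\FS_n$ need not exist as a scheme, so the comparison statement is really about algebraic spaces in Rydh's generality (for the paper's quasiprojective setting this is irrelevant); second, in the flat case your Lazard argument works because $\rho$ is an $A$-module map whose source and target commute with filtered colimits, and an algebra map that is a module isomorphism is an algebra isomorphism --- but you should note that the multiplication on $\TS_A(M)$ relevant here is the shuffle product, so that $\rho(\prod_k e_k^{[a_k]})$ really lands on the corresponding monomial symmetric tensor.
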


\subsection*{The norm family}
Let $A$ be a ring. Fix an $A$-algebra $A \to B$ and a $B$-module $M$ which is free of rank $n$ as an $A$-module. The norm map associated to the data $(A \to B,M)$ is the map
\begin{equation}\label{det-law}
\begin{tikzcd}
\mathrm{N}_{B/A} \colon B \arrow{r} & \End_A(M) \arrow{r}{\det} & \End_A(\wedge^n M) = A
\end{tikzcd}
\end{equation}
where the first \emph{homomorphism} takes $b \in B$ to the endomorphism $M \to M$ sending $m \mapsto bm$, and the second \emph{map} is defined by
\[
\det(\phi) (m_1\wedge \cdots \wedge m_n) = \phi(m_1) \wedge \cdots \wedge \phi(m_n),\quad \phi \in \End_A(M).
\]
The construction of the \emph{determinant law} \eqref{det-law} can be extended to $M$ locally free by passing to an open cover of $\Spec A$. See \cite[IV, \S (7.5)]{Rydh1} for more details.

Set $S=\Spec A$ and $X=\Spec B$. Let $X \to S$ be the morphism corresponding to $A \to B$. The crucial observation, now, is that the determinant law is a multiplicative law of homogeneous degree $n$, and so it defines an element
\begin{align*}
\mathrm{N}_{B/A}(M) \,\,\in\,\, \Pol^n_{\mult}(B,A) &= \Hom_{\Alg_A}(\Gamma^n_A(B),A) \\
&= \Hom_{\Sch_S}(S,\Spec \Gamma^n_A(B)) \\
&= \Hom_{\Sch_S}(S,\Sym^n(X/S)).    
\end{align*}

\subsection*{Construction of the support map over an affine base}

We finally have all the tools to construct the support map.

\begin{theorem}[Rydh]
Let $S=\Spec A$, where $A$ is a ring containing $\BQ$ as a subring. Let $X \to S$ be a separated morphism of schemes. For any $n \in \BN$ there is an $S$-morphism
\[
\supp^n_{X/S} \colon \CCoh^n(X/S) \to \Sym^n(X/S)
\]
which, if $X$ is quasiprojective over $S=\Spec \bfk$, sends $[F] \in \CCoh^n(X)(\bfk)$ to the $0$-cycle $[\cycle(F)] \in \Sym^n(X)(\bfk)$.
\end{theorem}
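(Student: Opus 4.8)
The plan is to produce, functorially in the affine $S$-scheme $T=\Spec R$, a $T$-point of $\Sym^n(X/S)$ out of any family of $0$-dimensional sheaves $\cF\in\CCoh^n(X/S)(T)$, and then to check the two requisite compatibilities (invariance under isomorphisms of $\cF$, and compatibility with base change) so that the assignment glues to a natural transformation $\CCoh^n(X/S)\to\Sym^n(X/S)$, hence --- $\Sym^n(X/S)$ being a scheme --- to the desired $S$-morphism. Reducing to affine $T$ is harmless, since a morphism to a scheme may be built Zariski-locally on the source and families of $0$-dimensional sheaves restrict to affine opens.

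So fix $T=\Spec R$ and $\cF$ over $X_T$ as above. Let $g\colon Z=\Supp(\cF)\into X_T$ be the scheme-theoretic support; by hypothesis $g$ is finite, in particular affine, so $Z=\Spec B$ for a module-finite $R$-algebra $B$, and $\cF=\widetilde M$ for a $B$-module $M$. Pushing forward, $g_\ast\cF$ is coherent and $T$-flat, so $M=\Gamma(T,g_\ast\cF)$ is a finitely presented flat, hence locally free, $R$-module; its rank is $n$, because on the fibre over $t\in T$ one has $\dim_{\kappa(t)}M\otimes_R\kappa(t)=h^0(X_t,\cF_t)=\chi(\cF_t)=n$. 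Feed the pair $(R\to B, M)$ into Rydh's determinant construction \eqref{det-law}: the resulting norm $\mathrm N_{B/R}(M)$ is a multiplicative polynomial law of homogeneous degree $n$, so by the universal property of divided powers it is an $R$-algebra map $\Gamma^n_R(B)\to R$, i.e.\ a morphism $\alpha\colon T\to\Spec\Gamma^n_R(B)=\Gamma^n(Z/T)$. The pair $(Z,\alpha)$ is a family of $0$-cycles of degree $n$ relative to $X_T\to T$ (the morphism $Z\to T$, being finite, is integral), hence a $T$-point of $\underline\Gamma^n_{X/S}$, hence a morphism $T\to\Gamma^n(X/S)$; composing with the inverse of the canonical isomorphism $\Sym^n(X/S)\to\Gamma^n(X/S)$ --- which exists because $A\supset\BQ$ makes $S$ of pure characteristic $0$ --- gives the $T$-point of $\Sym^n(X/S)$ we wanted.

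Next I would check the compatibilities. Invariance under isomorphisms of $\cF$ is clear, as $B$, $M$ and $\mathrm N_{B/R}(M)$ depend only on the isomorphism class. The construction is moreover independent of the closed subscheme carrying $\cF$: if $\cF$ is also a module over a larger $Z_1=\Spec B_1$ with surjection $B_1\onto B$, then $\mathrm N_{B_1/R}(M)$ is the composite of $\mathrm N_{B/R}(M)$ with $B_1\onto B$, so $(Z_1,\alpha_1)\sim(Z,\alpha)$ in $\underline\Gamma^n_{X/S}(T)$. For a base change $h\colon T'=\Spec R'\to T$, I would use that $\Gamma^n_R(-)$, $\End_R(-)$ and $\det$ all commute with $-\otimes_RR'$, together with $B\otimes_RR'=\Gamma(Z\times_TT',\OO)$ and $M\otimes_RR'=\Gamma(Z\times_TT',\cF_{T'})$, to see that the $T'$-point attached to $\cF_{T'}$ --- formed using the closed subscheme $Z\times_TT'$, which still carries $\cF_{T'}$ --- is the pullback of the $T$-point attached to $\cF$. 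This is the delicate point: one must fix $Z$ and pull it back rather than re-take scheme-theoretic support fibrewise, since the latter does not commute with base change, and one must invoke exactly the right base-change statements for the divided-power and determinant functors. (Local freeness of $M$ over $R$ also deserves a word, but it is the standard fact that flat plus finitely presented implies locally free.)

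Finally, the identification on $\bfk$-points when $X$ is quasiprojective over $S=\Spec\bfk$: take $T=\Spec\bfk$ and $\cF$ with $\chi(\cF)=n$, so $Z=\Supp(\cF)=\coprod_x\Spec B_x$ with each $B_x$ local Artinian of residue field $\bfk$, and $M=\bigoplus_x M_x$ with $\dim_\bfk M_x=\length_{\OO_{X,x}}\cF_x=:\ell_x$. For $b=(b_x)\in B=\prod_x B_x$, multiplication by $b$ on $M$ is block diagonal and each $b_x-\bar b_x$ (with $\bar b_x\in\bfk$ the residue of $b_x$) is nilpotent, so $\det(b\mid M)=\prod_x\bar b_x^{\,\ell_x}$; this is precisely the multiplicative law cutting out the $0$-cycle $\sum_x\ell_x\cdot x=\cycle(\cF)$ in $\Sym^n(X)(\bfk)$. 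I expect the base-change bookkeeping of the third paragraph to be the main obstacle; everything else is formal once the divided-power machinery recalled above is in place.
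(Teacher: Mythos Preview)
Your proposal is correct and follows essentially the same route as the paper: reduce to affine $T=\Spec R$, pass to the finite support $Z=\Spec B$ of $\cF$, feed the locally free $R$-module $M$ of rank $n$ into the determinant/norm law to obtain an $R$-algebra map $\Gamma^n_R(B)\to R$, and then use the characteristic-zero identification $\Sym^n\cong\Gamma^n$. The paper is terser---it simply ``replaces $X$ by $Z$'' and invokes the base-change formula for $\Gamma^n$ without spelling out the compatibilities---whereas you make explicit the independence of the chosen support, the base-change verification (correctly flagging that $\Supp(\cF)$ itself does not commute with base change), and the computation on $\bfk$-points; these are exactly the points the paper suppresses.
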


\begin{proof}
We divide the construction of the morphism in three steps.

\smallbreak
\noindent
\textbf{Step I. Reduction to affine base.}
To construct the sought after morphism, it is enough to construct it on affine $S$-schemes $T=\Spec R$. This is because the $\CCoh^n(X/S)$ is locally of finite presentation, hence limit preserving, and moreover any scheme is a colimit of affine schemes.

\smallbreak
\noindent
\textbf{Step II. Reduction to affine source.}
Let $\cF \in \Coh(X)$ be an $S$-flat family of sheaves of relative Hilbert polynomial $n$. Let $Z=\Supp(\cF)$ be its support, with inclusion $\iota \colon Z \into X$. Then $Z \to S$ is finite and $\iota_\ast \iota^\ast \cF = \cF$. Therefore after replacing $X$ with $Z$ we may assume $X \to S$ is finite, in particular affine. But $S$ is affine, therefore we may assume $X$ is affine.

\smallbreak
\noindent
\textbf{Step III. The affine case.}
Set $X=\Spec B$ and $S=\Spec A$. As already observed, we have
\[
\Sym^n(X/S) = \Spec \Gamma^n_A(B) = \Gamma^n(X/S).
\]
Let $T=\Spec R \to S$ be an affine $S$-scheme. Fix a $T$-valued point of $\CCoh^n(X/S)$, i.e.~a finitely presented $(B \otimes_AR)$-module $M$ such that $M$ is locally free of rank $n$ as an $R$-module (via the map $R \to B \otimes_AR$ corresponding to $X \times_ST\to T$). Consider the norm family  associated to $(R \to B\otimes_AR,M)$. It defines an element
\begin{equation}
\begin{split}
    \mathrm{N}_{B\otimes_AR/R}(M)\,\,\in\,\,\Pol^n_{\mult}(B\otimes_AR,R)
    &= \Hom_{\Alg_R} (\Gamma^n_R(B\otimes_AR),R) \\
    &= \Hom_{\Sch_T}(T,\Spec \Gamma^n_R(B\otimes_AR)) \\
    &= \Hom_{\Sch_T}(T,\Gamma^n(X\times_ST / T)) \\
    &= \Hom_{\Sch_S}(T,\Gamma^n(X/S)) \\
    &= \Hom_{\Sch_S}(T,\Sym^n(X/S)).
\end{split}
\end{equation}
This yields the sought after $T$-valued point of the $S$-scheme $\Sym^n(X/S)$.
\end{proof}

\section{Basic theory of algebraic stacks}\label{sec:stacks}

We include this appendix in order to make the paper self-contained. We refer the reader to \cite{MR2223406} for a thorough introduction to stacks in full generality.

\subsection{Groupoid fibrations and the definition of stacks}
Let $B$ a scheme. We denote by $\Sch_B$ the category of $B$-schemes, whose objects are pairs $(U,q)$ where $U$ is a scheme and $q\colon U \to B$ is a morphism of schemes. We shall abuse notation by simply writing $U$ for an object of this category. The \'etale coverings $\set{U_i \to U}_{i \in I}$ of objects $U \in \Ob(\Sch_B)$ define a Grothendieck topology $\tau_{\textrm{\'et}}$ and thus a \emph{site} 
\[
\Sch_{B,\textrm{\'et}} = (\Sch_B,\tau_{\textrm{\'et}})
\]
called the (big) \'etale site of $B$-schemes. Fix a $B$-scheme $U \in \Ob(\Sch_B)$ and a functor $\mathsf F \colon \Sch_U^{\opp} \to \Sets$. We say that $\mathsf F$ is an \emph{\'etale sheaf} if it satisfies the sheaf axioms on the \'etale site $\Sch_{U,\textrm{\'et}}$, which means that for every $V \in \Ob(\Sch_U)$ and for every \'etale cover $\set{V_i \to V}_{i \in I} \in \tau_{\textrm{\'et}}$ the diagram
\[
\begin{tikzcd}
    \mathsf F(V) 
\arrow[r] 
&
\displaystyle\prod_{i\in I}\mathsf F(V_i) 
\arrow[r, shift left]
\arrow[r, shift right] 
& 
\displaystyle\prod_{(i,j) \in I\times I} \mathsf F(V_i \times_V V_j)
\end{tikzcd}
\]
is an equaliser in the category of sets.

Given a category $\CX$, a covariant functor $p\colon \CX \to \Sch_B$ and an object $U\in\Ob(\Sch_B)$, the \emph{fibre category} over $U$ is the subcategory $p^{-1}(U)\subset \CX$ which has, as objects, the objects $\eta \in \Ob(\CX)$ such that $p(\eta) = U$ and, as morphisms between two objects $\eta$ and $\eta'$, the morphisms $f\colon \eta \to \eta'$ in $\CX$ such that $p(f) = \id_U$. A \emph{groupoid fibration} over $B$ is a covariant functor $p\colon \CX \to \Sch_B$ such that the fibre category $p^{-1}(U)$ is a groupoid for every $U \in \Ob(\Sch_B)$, which means that every arrow in $p^{-1}(U)$ is an isomorphism.

If $p \colon \CX \to \Sch_B$ is a covariant functor, we say (cf.~\cite[Def.~3.1]{MR2223406}) that an arrow $\phi \in \Hom_{\CX}(\xi,\eta)$ is \emph{cartesian} if for any arrow $\psi \in \Hom_{\CX}(\zeta,\eta)$ and for any arrow $h \in \Hom_{\Sch_B} (p(\zeta),p(\xi))$ such that $p(\phi) \circ h = p(\psi)$, there is a unique arrow $\theta \in \Hom_\CX(\zeta,\xi)$ such that $p(\theta) = h$ and $\phi\circ \theta=\psi$.
This is depicted in the following diagram.

\[
\begin{tikzcd}[column sep=large]
    \zeta\arrow[mapsto]{dd}\arrow[bend left=20]{drr}{\forall\,\psi}\arrow[dotted]{dr}[description]{\exists\,!\,\theta} & & \\
    & \xi\arrow[mapsto,crossing over]{dd}\arrow{r}{\phi} & \eta\arrow[mapsto]{dd} \\
    p(\zeta)\arrow[bend left=20]{drr}[description]{p(\psi)}\arrow{dr}[description]{\forall\,h} & & \\
    & p(\xi)\arrow{r}{p(\phi)} & p(\eta)
\end{tikzcd}
\]

Given a groupoid fibration $p \colon \CX \to \Sch_B$, we always assume a `choice of pullbacks' has been made once and for all: this means that for every morphism $h\colon V \to U$ in the base category $\Sch_B$ and for every object $\eta \in \Ob(p^{-1}(U))$, a choice of a cartesian arrow $h^\ast \eta \to \eta$ in $\CX$ such that $p(h^\ast \eta \to \eta) = h$, has been made once and for all. 
\[
\begin{tikzcd}
    & \eta\arrow[mapsto]{d}{p} \\
    V \arrow{r}{h} & U
\end{tikzcd} \qquad \qquad \rightsquigarrow \qquad \qquad
\begin{tikzcd}[column sep=large]
    h^\ast \eta\arrow{r}{\textrm{cartesian}} \arrow[mapsto]{d}{p} & \eta\arrow[mapsto]{d}{p} \\
    V \arrow{r}{h} & U
\end{tikzcd}
\]
Such a choice translates into the datum of a `pullback functor' $h^\ast \colon p^{-1}(U) \to p^{-1}(V)$, for every $h\in\Hom_{\Sch_B}(V,U)$, realising a canonical isomorphism
\[
k^\ast \circ h^\ast \cong (h \circ k)^\ast
\]
for every pair of composable arrows $h \colon V \to U$ and $k \colon W \to V$. In fact, this choice of pullbacks is sometimes incorporated in the definition of a groupoid fibration, cf.~Definitions 3.5 and 3.21 in \cite{MR2223406}. 

\begin{notation}\label{notation_pullback}
Given a groupoid fibration $p \colon \CX \to \Sch_B$ (with a choice of pullbacks as above), and an object $\eta \in \Ob(p^{-1}(U))$, we shall use the notation $\eta|_V$ to denote the object $h^\ast \eta \in \Ob(p^{-1}(V))$.
\end{notation}

Let $p\colon \CX \to \Sch_B$ be a groupoid fibration (with a choice of pullbacks as above).
Given $U \in \Ob(\Sch_B)$ and two objects $\eta,\eta' \in \Ob(p^{-1}(U))$ in the fibre category, one has a presheaf 
\[
\begin{tikzcd}
\Iso_{\CX}(\eta,\eta')\colon \Sch_U^{\mathrm{op}} \arrow{r} & \Sets
\end{tikzcd}
\]
defined by sending an object $h\colon V \to U$ to the set $\Hom_{p^{-1}(V)}(h^\ast \eta,h^\ast\eta')$. Note that in this set all arrows are isomorphisms, by the groupoid condition, whence the notation.

Given an \'etale covering $\CU = \set{U_i \to U}_{i \in I}$ of an object $U \in \Ob(\Sch_B)$, a \emph{descent datum} relative to $\CU$ consists of the following data: an object $\eta_i \in \Ob(p^{-1}(U_i))$ for every $i \in I$, along with a collection of isomorphisms $\varepsilon_{ij}\colon \eta_i|_{U_{ij}} \simto \eta_j|_{U_{ij}}$ in $p^{-1}(U_{ij})$, where $U_{ij} = U_i\times_UU_j$, satisfying the cocycle condition on triple intersections. A descent datum $(\eta_i,\varepsilon_{ij})_{i,j}$ relative to $\CU$ is said to be \emph{effective} if there is a global object $\eta \in \Ob(p^{-1}(U))$ along with an isomorphism $\alpha_i\colon \eta|_{U_i} \simto \eta_i$ in $p^{-1}(U_i)$ for each $i$, such that $\varepsilon_{ij} = \alpha_j|_{U_{ij}} \circ (\alpha_i|_{U_{ij}})^{-1}$ for every $(i,j) \in I\times I$.

\begin{definition}\label{def:stack}
A $B$-\emph{stack}, or a stack over $B$, is a groupoid fibration $p\colon \CX \to \Sch_B$ (equipped with a choice of pullbacks) such that 
\begin{enumerate}
    \item For every object $U \in \Ob(\Sch_B)$ and for every $\eta,\eta' \in \Ob(p^{-1}(U))$, the presheaf $\Iso_{\CX}(\eta,\eta')$ is an \'etale sheaf, and
    \item For every object $U \in \Ob(\Sch_B)$ and for every \'etale cover $\CU = \set{U_i \to U}$, all descent data $(\eta_i,\varepsilon_{ij})_{i,j}$ relative to $\CU$ are effective.
\end{enumerate}
\end{definition}

\subsection{Morphisms of stacks and their fibre products}
A morphism of $B$-stacks is a functor $f \colon \CX \to \CX'$ between the corresponding categories, respecting the projection to $\Sch_B$. These are called $1$-\emph{morphisms}, and they form themselves a category for every fixed $\CX$ and $\CX'$. Morphisms between $1$-morphisms are called 2-morphisms (strictly speaking, they are natural transformations). Stacks thus form a 2-category, in the sense of \cite[\href{https://stacks.math.columbia.edu/tag/003G}{Tag 003G}]{stacks-project}.

Let $p \colon \CX \to \Sch_B$ be a $B$-stack. The functor $p$ is called the structure morphism of the $B$-stack $\CX$. Sometimes we shall use the notation $\CX \to B$ to denote it. This is legitimate, because every $B$-scheme $X$ (e.g.~$B$ itself) is naturally a $B$-stack: just consider the functor $\Sch_X \to \Sch_B$. A $B$-stack arising in this fashion (i.e.~from a scheme) is called \emph{representable}.

\begin{remark}\label{rmk:1-mor-objects}
Let $p\colon \CX \to B$ be a $B$-stack, $U$ a $B$-scheme. To give a $1$-morphism $U \to \CX$ is the same as to give an object $\eta$ in the fibre category $p^{-1}(U)$. We shall use the (common) notation $\CX(U)$ to denote the fibre category $p^{-1}(U)$. We call the objects of this groupoid the $U$-\emph{valued points of} $\CX$. Informally speaking, one may think of a stack as an association
\[
\begin{tikzcd}
U \arrow[mapsto]{r} & \CX(U),
\end{tikzcd}
\]
where $\CX(U)$ is a groupoid, such that objects `glue nicely' along \'etale coverings. In other words, a $B$-stack is a `sheaf of groupoids' on the big \'etale site of $B$-schemes (one can also pick different topologies, but we will not do that here).
\end{remark}

\begin{definition}\label{def:fibre-product}
An \emph{isomorphism} of $B$-stacks is a pair $(f,g)$ of $1$-morphisms $f\colon \CX \to \CX'$ and $g \colon \CX' \to \CX$ over $B$ such that $g\circ f$ is 2-isomorphic to $\id_{\CX}$ and $f\circ g$ is 2-isomorphic to $\id_{\CX'}$.
\end{definition}

\begin{remark}
A morphism of $B$-stacks $\CX \to \CX'$ is an isomorphism if and only if the induced functor $\CX(U) \to \CX'(U)$ is an equivalence of groupoids for every $B$-scheme $U$.
\end{remark}

\begin{definition}\label{def:fibre-product-stacks}
Suppose given two morphisms of $B$-stacks $f \colon \CX \to \CY$ and $g \colon \CZ \to \CY$. The \emph{fibre product} of $f$ and $g$ is the category $\CX \times_{\CY}\CZ$ whose objects are triples $(x,z,\alpha)$, with $x \in \Ob(\CX)$, $z \in \Ob(\CZ)$ and $\alpha \colon f(x) \simto g(z)$ an isomorphism in a fibre of $\CY \to \Sch_B$. A morphism $(x,z,\alpha) \to (x',z',\alpha')$ is a pair
\[
\left(\beta_1\colon x \to x', \beta_2\colon z \to z'\right)
\]
of morphisms in fibre categories of $\CX \to \Sch_B$ and $\CZ \to \Sch_B$ respectively, such that 
\[
g(\beta_2) \circ \alpha = \alpha' \circ f(\beta_1) \in \Hom_{\CY}(f(x),g(z')).
\]
\end{definition}

The fibre product $\CX \times_\CY\CZ$ is naturally a $B$-stack. In the situation of \Cref{def:fibre-product-stacks}, one has a diagram
\[
\begin{tikzcd}
    \CX\arrow{d}\arrow{r} \times_{\CY}\CZ & \CX\arrow{d}{f} \\
    \CZ\arrow{r}{g} & \CY
\end{tikzcd}
\]
which is universally 2-commutative.
Analogously to the case of schemes, one can then define the diagonal morphism
\[
\begin{tikzcd}
\Delta_f \colon \CX \arrow{r} & \CX \times_{\CY}\CX
\end{tikzcd}
\]
for every $1$-morphism $f \colon \CX \to \CY$ of stacks. Explicitly, when evaluated on a $B$-scheme $U$, this morphism sends $\eta \in \Ob (\CX(U))$ to the triple $(\eta,\eta,\id_{f(\eta)}) \in \Ob((\CX \times_{\CY}\CX)(U))$.

\begin{definition}[Representable morphism]
A morphism of stacks $\CX \to \CY$ is \emph{schematic} (resp.~\emph{representable by algebraic spaces}) if for every morphism $S \to \CY$ from a scheme the fibre product $S \times_{\CY}\CX$ is a scheme (resp.~an algebraic space). (The definition of algebraic space is postponed to \Cref{rmk:alg-space}).
\end{definition}

Let $\mathscr P$ be a property of morphisms of algebraic spaces that is stable under base change and \'etale local on the target. Then a representable morphism of $B$-stacks $\CX \to \CY$ is said to have property $\mathscr P$ if for any morphism $S \to \CY$ from a scheme, the induced morphism of schemes $S \times_{\CY}\CX \to S$ has property $\mathscr P$. This includes properties like: surjective, quasicompact, closed immersion, open immersion, (locally) of finite type, separated, proper, flat, smooth, unramified, \'etale.

\begin{definition}\label{def:alg-dm-atlas}
Let $B$ be a scheme. A $B$-stack $\CX \to B$ is \emph{algebraic} (resp.~\emph{of Deligne--Mumford type}) if the following conditions are satisfied:
\begin{itemize}
    \item [(1)] The diagonal $\CX \to \CX\times_B\CX$ is representable by algebraic spaces, separated and quasicompact, and
    \item [(2)] there is a scheme $Y$ and a smooth (resp.~\'etale) surjective $1$-morphism $Y \to \CX$.
\end{itemize} 
The morphism $Y \to \CX$, or simply the scheme $Y$, is called an \emph{atlas} of the stack.
\end{definition}

\begin{definition}\label{def:stratification}
Let $\CX$ and $\CY$ be algebraic stacks locally of finite type over an algebraically closed field $\bfk$. A 1-morphism $f \colon \CX \to \CY$ is a \emph{geometric bijection} if it induces an equivalence 
    \[
    \begin{tikzcd}
    f(\bfk) \colon    \CX(\bfk) \arrow{r}{\sim} & \CY(\bfk)
    \end{tikzcd}
    \]
between the groupoids of $\bfk$-points. Given locally closed substacks $\set{\CZ_i \into \CY}_{i \in I}$ of an algebraic stack $\CY$, we say that $\set{\CZ_i}_{i \in I}$ form a \emph{stratification} of $\CY$ if the immersions induce a geometric bijection $\coprod_{i \in I}\CZ_i \to \CY$. Given this data, we simply write $\CY = \coprod_{i \in I}\CZ_i$, with a slight abuse of notation.
\end{definition}

\begin{example}
Let $Y$ be a $\bfk$-scheme, $G$ an algebraic group (necessarily smooth over $\bfk$ by Cartier's theorem) and $\sigma \colon Y \times_\bfk G \to Y$ a $G$-action on $Y$. For a $\bfk$-scheme $U$, consider the category $[Y/G](U)$ whose objects are pairs $(\pi,f)$ represented as diagrams
\[
\begin{tikzcd}
P \arrow[swap]{d}{\pi}\arrow{r}{f} & Y \\
U &
\end{tikzcd}
\]
where $\pi$ is a principal $G$-bundle with base $U$ and $f$ is a $G$-equivariant morphism, and where a morphism $(\pi,f) \to (\pi',f')$ is defined to be a morphism $\phi\colon P \to P'$ such that $f'\circ \phi=f$ and $\pi'\circ \phi= \pi$. It is a classical fact that $\phi$ must be an isomorphism, which confirms that $[Y/G](U)$ is a groupoid. The association
\[
\begin{tikzcd}
U \arrow[mapsto]{r} & {[}Y/G{]}(U)
\end{tikzcd}
\]
defines an algebraic stack $[Y/G]$ over $\bfk$. See \cite{LMB} for more details.
The diagram
\[
\begin{tikzcd}
Y \times_\bfk G \arrow[swap]{d}{\mathrm{pr}_1}\arrow{r}{\sigma} & Y \\
Y &
\end{tikzcd}
\]
defines an atlas $Y \to [Y/G]$. 

When $Y=\Spec \bfk$, the resulting stack $\mathrm BG=[\Spec \bfk/G]$ is called the \emph{classifying stack} of principal $G$-bundles.
\end{example}

\subsection{Inertia stack and stabiliser groups}
Let $p\colon \CX \to B$ be an algebraic stack. The 2-fibre product $\CI_\CX$ of the diagonal $\CX \to \CX \times_B\CX$ with itself is again an algebraic stack. It is called the \emph{inertia stack of} $\CX$. It comes equipped with a natural $1$-morphism
\[
\begin{tikzcd}
\CI_\CX \arrow{r} & \CX,
\end{tikzcd}
\]
which is representable by algebraic spaces and locally of finite type. See \cite[\href{https://stacks.math.columbia.edu/tag/050P}{Tag 050P}]{stacks-project} for a proof of all these assertions.
Roughly speaking, the proof goes as follows.
One observes that the objects of $\CI_\CX$ are pairs $(\eta,\alpha)$ where $\eta$ is an object of $\CX$ and $\alpha$ is an automorphism of $\eta$ in the fiber category $\CX(U)=p^{-1}(U)$, where $U = p(\eta)$.
One has the fibre diagram
\[
\begin{tikzcd}
\Iso_{\CX}(\eta,\eta)\MySymb{dr} \arrow{r}\arrow{d} & \CI_\CX \arrow{d} \\ U \arrow[swap]{r}{\eta} & \CX 
\end{tikzcd}
\]
where we denote by $\eta \colon U \to \CX$ is the $1$-morphism corresponding to $\eta \in \Ob(\CX(U))$, cf.~\Cref{rmk:1-mor-objects}. This fibre product does indeed coincide with the \'etale sheaf of \Cref{def:stack}, which is by default an algebraic space locally of finite type over $U$ \cite[\href{https://stacks.math.columbia.edu/tag/04XR}{Tag 04XR}]{stacks-project}.

\begin{remark}\label{rmk:alg-space}
A Deligne--Mumford stack $\CX$ such that $\CI_\CX \to \CX$ is an equivalence is called an \emph{algebraic space}. Such notion does not play a role in this paper (except in \Cref{sec:G-groups}, where we defined Grothendieck rings), and neither does the notion of Deligne--Mumford stack. Indeed, Deligne--Mumford stacks have \emph{finite stabilisers}, but automorphism groups of coherent sheaves are never finite. Algebraic spaces are even more special: as fibred categories, they are fibred in \emph{sets}, which makes them much closer to schemes than to actual stacks. We point out that we have full embeddings of categories
\[
\textrm{Schemes}\,\,\,\subset\,\,\,\textrm{Algebraic spaces}\,\,\,\subset\,\,\,\textrm{Deligne--Mumford stacks}\,\,\,\subset\,\,\,\textrm{Algebraic stacks}.
\]
\end{remark}

\subsection{Points of algebraic stacks and stabiliser preserving morphisms}
Let $\CX$ be an algebraic stack. We have the following equivalence relation on the set of morphisms $\Spec K \to \CX$ where $K$ is a field: if $K$ and $K'$ are fields, we say that $p \colon \Spec K \to \CX$ and $p'\colon \Spec K' \to \CX$ are equivalent if there is another field $\Omega$ and a diagram
\[
\begin{tikzcd}
    \Spec \Omega \arrow{r}\arrow{d} & \Spec K \arrow{d}{p} \\
    \Spec K' \arrow{r}{p'} & \CX
\end{tikzcd}
\]
which is 2-commutative. The underlying set of \emph{points} of an algebraic stack $\CX$ is the quotient $\lvert\CX\rvert$ by this equivalence relation. A morphism of algebraic stacks $f \colon \CX \to \CY$ induces a natural group homomorphism
\begin{equation}\label{eqn:isom-map}
\begin{tikzcd}
f_\ast(x) \colon \Iso_\CX(x,x) \arrow{r} & \Iso_\CY(f(x),f(x))
\end{tikzcd}
\end{equation}
for all $x \in \lvert \CX \rvert$.

\begin{definition}\label{def:stabiliser-preserving}
Let $f \colon \CX \to \CY$ be a morphism of algebraic stacks. We say that 
\begin{itemize}
    \item [\mylabel{stab-pres-i}{(i)}] $f$ is \emph{stabiliser preserving} if the natural morphism $\CI_\CX \to \CX \times_{\CY} \CI_\CY$ is an isomorphism,
    \item [\mylabel{stab-pres-ii}{(ii)}] $f$ \emph{induces an isomorphism on automorphism groups} at $x\in \lvert \CX \rvert$ if \eqref{eqn:isom-map} is an isomorphism.
\end{itemize}
\end{definition}

\begin{lemma}\label{lemma:iso-aut}
Let $f\colon \CX \into \CY$ be a locally closed immersion of algebraic stacks. Then $f$ induces an isomorphism on automorphism groups at every point.
\end{lemma}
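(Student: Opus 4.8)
The statement is that a locally closed immersion $f\colon \CX\into\CY$ of algebraic stacks induces an isomorphism on automorphism groups at every point. The plan is to reduce to the local structure of a locally closed immersion and to the behaviour of the inertia stack under base change. First I would recall that a locally closed immersion factors as an open immersion followed by a closed immersion (or vice versa), so by composing it suffices to treat the two cases separately: $f$ an open immersion, and $f$ a closed immersion. In each case the key point is that $f$ is a monomorphism of stacks, i.e.\ the diagonal $\Delta_f\colon\CX\to\CX\times_\CY\CX$ is an isomorphism.

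The main step is to translate "monomorphism" into "stabiliser preserving." Recall that $\CI_\CX = \CX\times_{\CX\times_B\CX}\CX$ and $\CI_\CY=\CY\times_{\CY\times_B\CY}\CY$. Using that $f$ is a monomorphism, one has $\CX\times_\CY\CX\simto\CX$ via the diagonal, and chasing the resulting fibre-product identities gives a canonical isomorphism
\[
\CI_\CX \simto \CX\times_\CY\CI_\CY.
\]
Concretely: an object of $\CX\times_\CY\CI_\CY$ over a scheme $U$ is a triple $(x,\,(y,\alpha),\,\phi)$ with $x\in\CX(U)$, $y\in\CY(U)$, $\alpha\in\Aut_\CY(y)$, and $\phi\colon f(x)\simto y$; transporting $\alpha$ along $\phi$ gives an automorphism $\phi^{-1}\alpha\phi$ of $f(x)$, and since $f$ is fully faithful on fibre categories (being a monomorphism, its induced functor $\CX(U)\to\CY(U)$ is fully faithful) this lifts uniquely to an automorphism of $x$ in $\CX(U)$. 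This is exactly the assertion that $f$ is stabiliser preserving in the sense of \Cref{def:stabiliser-preserving}\,\ref{stab-pres-i}. Restricting this isomorphism of inertia stacks to a point $x\in\lvert\CX\rvert$, i.e.\ pulling back along $x\colon\Spec K\to\CX$, yields precisely that \eqref{eqn:isom-map} is an isomorphism, which is \ref{stab-pres-ii}.

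The remaining point is to justify that $f$ being a locally closed immersion indeed makes it a monomorphism with the fully faithful property used above. For a closed immersion $f\colon\CX\into\CY$, this is exactly that $f$ is representable and, for every scheme $S\to\CY$, the base change $S\times_\CY\CX\to S$ is a closed immersion of schemes, hence a monomorphism; for an open immersion it is even more immediate. Descent then upgrades "monomorphism after every schematic base change" to "$\Delta_f$ an isomorphism," so the fibre category functor $\CX(U)\to\CY(U)$ is fully faithful for all $U$. I expect the only mildly delicate bookkeeping — and hence the main obstacle — to be the diagram chase identifying $\CI_\CX$ with $\CX\times_\CY\CI_\CY$ using the 2-categorical fibre products correctly (keeping track of the isomorphism data $\phi$ in the triples), rather than any deep input. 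Alternatively, one can cite \cite[\href{https://stacks.math.columbia.edu/tag/050P}{Tag 050P}] and the compatibility of inertia with monomorphisms directly, but I would include the short argument for self-containedness.
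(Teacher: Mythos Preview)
Your proposal is correct and follows essentially the same route as the paper: both arguments observe that a locally closed immersion is a monomorphism, deduce that it is stabiliser preserving (i.e.\ $\CI_\CX \simto \CX\times_\CY\CI_\CY$), and then conclude the isomorphism on automorphism groups. The paper compresses the first step into a citation of \cite[\href{https://stacks.math.columbia.edu/tag/06R5}{Tag 06R5}]{stacks-project} and handles the last step by invoking unramifiedness via \cite[\href{https://stacks.math.columbia.edu/tag/0DU9}{Tag 0DU9}]{stacks-project}, whereas you unpack the full-faithfulness argument and then simply pull back along a point---which is in fact the easier direction and does not need unramifiedness. Your factorisation into open and closed immersions is harmless but unnecessary: the argument works verbatim for any monomorphism, and locally closed immersions are monomorphisms directly.
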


\begin{proof}
If $\CX \to \CY$ is a monomorphism of algebraic stacks, then the diagram
\[
\begin{tikzcd}
    \CI_\CX \arrow{r}\arrow{d} & \CX\arrow{d}{f} \\
    \CI_\CY \arrow{r} & \CY
\end{tikzcd}
\]
is 2-cartesian \cite[\href{https://stacks.math.columbia.edu/tag/06R5}{Tag 06R5}]{stacks-project}. Locally closed immersions are monomorphisms, thus $\CI_\CX \to \CX \times_{\CY} \CI_\CY$ is an isomorphism, i.e.~Condition \ref{stab-pres-i} in \Cref{def:stabiliser-preserving} is fulfilled. On the other hand, $f$ is unramified, thus being stabiliser preserving is equivalent to inducing  isomorphisms on automorphism groups for every point $x \in \lvert \CX \rvert$ by \cite[\href{https://stacks.math.columbia.edu/tag/0DU9}{Tag 0DU9}]{stacks-project}. 
\end{proof}

\bibliographystyle{amsplain-nodash} 
\bibliography{bib}

\ifx\undefined\bysame
\newcommand{\bysame}{\leavevmode\hbox to3em{\hrulefill}\,}
\fi
\begin{thebibliography}{10}

\bibitem{BFP19}
Massimo Bagnarol, Barbara Fantechi, and Fabio Perroni, {\em {On the motive of
  zero-dimensional Quot schemes on a curve}}, New York J. Math. {\bf 26}
  (2019), no.~2020, 138--148.

\bibitem{BR18}
Sjoerd Beentjes and Andrea~T. Ricolfi, {\em {Virtual counts on Quot schemes and
  the higher rank local DT/PT correspondence}}, {Math. Res. Lett.} {\bf 28}
  (2021), no.~4, 967--1032.

\bibitem{Bejleri_Ranganathan_Vakil_2020}
Dori Bejleri, Dhruv Ranganathan, and Ravi Vakil, {\em {Motivic Hilbert zeta
  functions of curves are rational}}, Journal of the Institute of Mathematics
  of Jussieu {\bf 19} (2020), no.~3, 947--964.

\bibitem{Bertin-Hilb}
Jos{\'e} Bertin, {\em The punctual {Hilbert} scheme: an introduction},
  Geometric methods in representation theory. I. Selected papers based on the
  presentations at the summer school, Grenoble, France, June 16 -- July 4,
  2008, Paris: Soci{\'e}t{\'e} Math{\'e}matique de France, 2012, pp.~1--102.

\bibitem{BM15}
Jim Bryan and Andrew Morrison, {\em Motivic classes of commuting varieties via
  power structures}, J. Algebraic Geom. {\bf 24} (2015), no.~1, 183--199.

\bibitem{cazzaniga2020higher}
Alberto Cazzaniga, Dimbinaina Ralaivaosaona, and Andrea~T. Ricolfi, {\em
  {Higher rank motivic Donaldson-Thomas invariants of $\mathbb{A}^3$ via
  wall-crossing, and asymptotics}}, Math. Proc. Cambridge Philos. Soc. (2022),
  97--122.

\bibitem{cazzaniga2020framed}
Alberto Cazzaniga and Andrea~T. Ricolfi, {\em {Framed sheaves on projective
  space and Quot schemes}}, Math. Z. {\bf 300} (2022), 745--760.

\bibitem{BenSven2}
Ben Davison and Sven Meinhardt, {\em {The motivic Donaldson--Thomas invariants
  of $(-2)$ curves}}, Algebra and Number Theory {\bf 11} (2017), no.~6,
  1243--1286.

\bibitem{DavisonR}
Ben Davison and Andrea~T. Ricolfi, {\em {The local motivic DT/PT
  correspondence}}, J. Lond. Math. Soc. {\bf 104} (2021), no.~3, 1384--1432.

\bibitem{DenefLoeser1}
Jan Denef and Fran{\c c}ois Loeser, {\em {Geometry on arc spaces of algebraic
  varieties}}, {3rd European congress of mathematics (ECM), Barcelona, Spain,
  July 10--14, 2000. Volume I}, Basel: Birkh\"auser, 2001, pp.~327--348.

\bibitem{EKStacks}
Torsten Ekedahl, {\em {The Grothendieck group of algebraic stacks}},
  {\href{arXiv:0903.3143v2}{ArXiv:0903.3143v2}}, 2009.

\bibitem{Ellingsrud_Lehn}
Geir Ellingsrud and Manfred Lehn, {\em Irreducibility of the punctual quotient
  scheme of a surface}, Ark. Mat. {\bf 37} (1999), no.~2, 245--254.

\bibitem{fga_explained}
Barbara Fantechi, Lothar G\"{o}ttsche, Luc Illusie, Steven~L. Kleiman, Nitin
  Nitsure, and Angelo Vistoli, {\em Fundamental algebraic geometry},
  Mathematical Surveys and Monographs, vol. 123, American Mathematical Society,
  Providence, RI, 2005, Grothendieck's FGA explained.

\bibitem{Fantechi-Ricolfi-structural}
Barbara Fantechi and Andrea~T. Ricolfi, {\em On the stack of 0-dimensional
  coherent sheaves: structural aspects}, 2024.

\bibitem{Motives-nested}
Nadir Fasola, Michele Graffeo, Danilo Lewa\'{n}ski, and Andrea~T. Ricolfi, {\em
  {Invariants of nested Hilbert and Quot schemes on surfaces}},
  \href{https://arxiv.org/abs/2503.14175}{ArXiv:2503.14175}, 2025.

\bibitem{FMR_K-DT}
Nadir Fasola, Sergej Monavari, and Andrea~T. Ricolfi, {\em {Higher rank
  K-theoretic Donaldson--Thomas theory of points}}, Forum Math. Sigma {\bf 9}
  (2021), no.~E15, 1--51.

\bibitem{FF1}
Walter Feit and Nathan~J. Fine, {\em Pairs of commuting matrices over a finite
  field}, Duke Math. J {\bf 27} (1960), 91--94.

\bibitem{double-nested-1}
Michele Graffeo, Paolo Lella, Sergej Monavari, Andrea~T. Ricolfi, and Alessio
  Sammartano, {\em {The geometry of double nested Hilbert schemes of points on
  curves}}, To appear in Trans. Amer. Math. Soc. TAMS,
  {\href{https://doi.org/10.1090/tran/9247}{DOI: 10.1090/tran/9247}}, 2025.

\bibitem{MOTIVES}
Michele Graffeo, Sergej Monavari, Riccardo Moschetti, and Andrea~T. Ricolfi,
  {\em {The motive of the Hilbert scheme of points in all dimensions}},
  \href{https://arxiv.org/abs/2406.14321}{ArXiv:2406.14321}, 2024.

\bibitem{Grothendieck_fpqc}
Alexander Grothendieck, {\em Technique de descente et th\'eor\`emes d'existence
  en g\'eom\'etrie alg\'ebrique. {I.} {G\'en\'eralit\'es.} {Descente} par
  morphismes fid\`element plats}, S\'eminaire Bourbaki, Exp. no.~190, no.~5,
  Soc. Math. France, 1960, pp.~299--327.

\bibitem{Grothendieck_Quot}
Alexander Grothendieck, {\em Techniques de construction et th\'{e}or{\`e}mes
  d'existence en g\'{e}om\'{e}trie alg\'{e}brique. {IV}. {L}es sch\'{e}mas de
  {H}ilbert}, S\'{e}minaire {B}ourbaki, Exp. No. 221, no.~6, Soc. Math. France,
  1961, pp.~249--276.

\bibitem{EGA32}
Alexander Grothendieck, {\em {\'{E}l{\'e}ments de {G}{\'e}om{\'e}trie
  {A}lg{\'e}brique (r{\'e}dig{\'e}s avec la collaboration de {J}ean
  {D}ieudonn{\'e}): {III}. \'{E}tude cohomologique des faisceaux coh\'erents,
  {S}econde partie}}, Inst. Hautes {\'E}tudes Sci. Publ. Math. (1963), no.~32,
  5--91.

\bibitem{Grothendieck-torsion}
Alexandre Grothendieck, {\em Torsion homologique et sections rationnelles},
  S\'eminaire Claude Chevalley {\bf 3} (1958), 1--29.

\bibitem{GLMps}
Sabir~M. {Gusein-Zade}, Ignacio {Luengo}, and Alejandro {Melle-Hern\'andez},
  {\em {A power structure over the Grothendieck ring of varieties}}, {Math.
  Res. Lett.} {\bf 11} (2004), no.~1, 49--57.

\bibitem{GLMHilb}
Sabir~M. {Gusein-Zade}, Ignacio {Luengo}, and Alejandro {Melle-Hern\'andez},
  {\em {Power structure over the Grothendieck ring of varieties and generating
  series of Hilbert schemes of points}}, {Mich. Math. J.} {\bf 54} (2006),
  no.~2, 353--359.

\bibitem{zbMATH06477021}
Jack Hall and David Rydh, {\em General {Hilbert} stacks and {Quot} schemes},
  Mich. Math. J. {\bf 64} (2015), no.~2, 335--347 (English).

\bibitem{Hartshorne_AG}
Robin Hartshorne, {\em Algebraic geometry}, Springer-Verlag, New
  York-Heidelberg, 1977, Graduate Texts in Mathematics, No. 52.

\bibitem{Huang-Jiang}
Yifeng Huang and Ruofan Jiang, {\em {Punctual Quot schemes and Cohen--Lenstra
  series of the cusp singularity}},
  {\href{https://arxiv.org/abs/2305.06411}{ArXiv:2305.06411}}, 2023.

\bibitem{modulisheaves}
Daniel Huybrechts and Manfred Lehn, {\em The geometry of moduli spaces of
  sheaves}, Second Edition, Cambridge University Press, 2010, pp.~xviii+325.

\bibitem{Kapranov-zeta}
Mikhail Kapranov, {\em {The elliptic curve in the S-duality theory and
  Eisenstein series for Kac--Moody groups}},
  {\href{https://arxiv.org/abs/math/0001005v2}{ArXiv:0001005v2}}, 2000.

\bibitem{kreschcycle}
Andrew Kresch, {\em Cycle groups for {A}rtin stacks}, Invent. Math. {\bf 138}
  (1999), no.~3, 495--536.

\bibitem{LMB}
G\'{e}rard Laumon and Laurent Moret-Bailly, {\em Champs alg\'{e}briques},
  Ergebnisse der Mathematik und ihrer Grenzgebiete. 3. Folge. A Series of
  Modern Surveys in Mathematics, vol.~39, Springer-Verlag, Berlin, 2000.

\bibitem{zbMATH05081797}
Max Lieblich, {\em Remarks on the stack of coherent algebras}, Int. Math. Res.
  Not. {\bf 2006} (2006), no.~11, 12, Id/No 75273.

\bibitem{MR_nested_Quot}
Sergej Monavari and Andrea~T. Ricolfi, {\em On the motive of the nested {Q}uot
  scheme of points on a curve}, J. Algebra {\bf 610} (2022), 99--118.

\bibitem{MR-hyperquot}
Sergej Monavari and Andrea~T. Ricolfi, {\em Hyperquot schemes on curves:
  virtual class and motivic invariants}, Math.~Ann.,
  \href{https://link.springer.com/article/10.1007/s00208-025-03129-2?utm_source=rct_congratemailt&utm_medium=email&utm_campaign=oa_20250321&utm_content=10.1007%2Fs00208-025-03129-2}{DOI:https://doi.org/10.1007/s00208-025-03129-2},
  2025.

\bibitem{MR18}
Riccardo Moschetti and Andrea~T. Ricolfi, {\em On coherent sheaves of small
  length on the affine plane}, {J. Algebra} {\bf 516} (2018), 471--489.

\bibitem{mozgovoy2019motivic}
Sergey Mozgovoy, {\em Motivic classes of quot-schemes on surfaces}, 2019.

\bibitem{Mumford-GIT}
David Mumford and John Fogarty, {\em Geometric invariant theory. 2nd enlarged
  ed}, Ergeb. Math. Grenzgeb., vol.~34, Springer-Verlag, Berlin, 1982.

\bibitem{Nit}
Nitin Nitsure, {\em Construction of {H}ilbert and {Q}uot schemes}, Fundamental
  Algebraic Geometry, Math. Surveys Monogr., vol. 123, Amer. Math. Soc.,
  Providence, RI, 2005, pp.~105--137.

\bibitem{Olsson_book}
Martin Olsson, {\em Algebraic spaces and stacks}, American Mathematical Society
  Colloquium Publications, vol.~62, American Mathematical Society, Providence,
  RI, 2016.

\bibitem{ricolfi2019motive}
Andrea~T. Ricolfi, {\em {On the motive of the Quot scheme of finite quotients
  of a locally free sheaf}}, J. Math. Pures Appl. {\bf 144} (2020), 50--68.

\bibitem{Virtual_Quot}
Andrea~T. Ricolfi, {\em {Virtual classes and virtual motives of Quot schemes on
  threefolds}}, Adv. Math. {\bf 369} (2020), 107182.

\bibitem{Roby}
Norbert Roby, {\em Lois polyn{\^o}mes et lois formelles en th{\'e}orie des
  modules}, Ann. Sci. {\'E}c. Norm. Sup{\'e}r. (3) {\bf 80} (1963), 213--348.

\bibitem{Rydh1}
David Rydh, {\em {Families of cycles and the Chow scheme}}, Ph.D. thesis, KTH,
  Stockholm, 2008.

\bibitem{Serre-GLn}
Jean-Pierre Serre, {\em Espaces fibr\'es alg\'ebriques}, S\'eminaire Claude
  Chevalley {\bf 3} (1958), 1--37.

\bibitem{stacks-project}
{Stacks Project Authors}, {\em {\href{http://stacks.math.columbia.edu}{Stacks
  Project}}}, 2024.

\bibitem{MR2223406}
Angelo Vistoli, {\em Grothendieck topologies, fibered categories and descent
  theory}, Fundamental algebraic geometry, Math. Surveys Monogr., vol. 123,
  Amer. Math. Soc., Providence, RI, 2005, pp.~1--104.

\end{thebibliography}

\bigskip
\noindent
{\small Andrea T. Ricolfi \\
\address{SISSA, Via Bonomea 265, 34136, Trieste (Italy)} \\
\href{mailto:aricolfi@sissa.it}{\texttt{aricolfi@sissa.it}}
}

\bigskip
\noindent
{\small Barbara Fantechi \\
\address{SISSA, Via Bonomea 265, 34136, Trieste (Italy)} \\
\href{mailto:aricolfi@sissa.it}{\texttt{fantechi@sissa.it}}
}
\end{document}